\definecolor{darkgreen}{rgb}{0.0, 0.5, 0.0}
\newcommand{\C}{\mathbb{C}}
\newcommand{\T}{\mathbb{T}}
\newcommand{\N}{\mathbb{N}}
\newcommand{\p}{\mathbb{P}}
\newcommand{\A}{\mathcal{A}}
\newcommand{\D}{\mathbb{D}}
\newcommand{\E}{\mathbb{E}}
\newcommand{\h}{\mathrm{H}}
\newcommand{\R}{\mathbb{R}}
\numberwithin{equation}{section}
\theoremstyle{plain}
\newtheorem{theo}{Theorem}[section]
\newtheorem{prop}[theo]{Proposition}
\newtheorem{lemma}[theo]{Lemma}
\newtheorem{coro}[theo]{Corollary}
\newtheorem{question}{Question}
\theoremstyle{definition}
\newtheorem{rem}[theo]{Remark}
\title{Random interpolating sequences in the polydisc and the unit ball}
\author{Alberto Dayan, Brett D. Wick and Shengkun Wu}
\address{Alberto Dayan, Department of Mathematics and Statistics, Washington University in St. Louis, MO 63130, USA}
\email{ alberto.dayan@wustl.edu}
\address{Brett D. Wick, Department of Mathematics and Statistics, Washington University in St. Louis, MO 63130, USA}
\email{ wick@math.wustl.edu}
\address{Shengkun Wu, College of Mathematics and Statistics, Chongqing University, Chongqing, 401331, PR China}
\email{shengkunwu@foxmail.com}
 \keywords{interpolating sequences, unit ball, random, Borel-Cantelli. \emph{AMS subject classification}: 32A70, 32E30}
\thanks{Alberto Dayan was partially supported by National Science Foundation Grant DMS 1565243}
 \thanks{Brett D. Wick is partially supported by National Science Foundation DMS grant 1800057, and by Australian Research Council -- DP 190100970. }
 \thanks{Shengkun Wu is supported by CSC201906050022.}
\date{\today}
\begin{document}
\maketitle
\begin{abstract}
We study almost sure separating and interpolating properties of random sequences in the polydisc and the unit ball. In the unit ball, we obtain the 0-1 Komolgorov law for a sequence to be interpolating almost surely for all the Besov-Sobolev spaces $B_{2}^{\sigma}\left(\mathbb{B}_{d}\right)$, in the range $0 < \sigma\leq1 / 2$. For those spaces, such interpolating sequences coincide with interpolating sequences for their multiplier algebras, thanks to the Pick property. This is not the case for the Hardy space $\h^2(\D^d)$ and its multiplier algebra $\h^\infty(\D^d)$: in the polydisc, we obtain a sufficient and a necessary condition for a sequence to be $\h^\infty(\D^d)$-interpolating almost surely. Those two conditions do not coincide, due to the fact that the deterministic starting point is less descriptive of interpolating sequences than its counterpart for the unit ball. On the other hand, we give the $0-1$ law for random interpolating sequences for $\h^2(\D^d)$.
\end{abstract}

\section{Introduction}
 A sequence $Z=(z_n)_{n\in\N}$ in  the unit disc $\D$ is interpolating for $\h^\infty$ if, given any bounded sequence $(w_n)_{n\in\N}$ in $\C$ there exists a bounded analytic function $f$ on $\D$ so that $f(z_n)=w_n$, for any $n$ in $\N$. The celebrated work of Carleson, \cite{carloss} and \cite{carlocm}, characterized interpolating sequences in term of separation properties. To be precise, let
\[
b_\tau(z):=\frac{\tau-z}{1-\overline{\tau}z},\qquad z\in\D
\]
be the involutive Blaschke factor at $\tau$ in $\D$, and let, for any $z$ and $w$ in $\D$,
\[
\rho(z, w):=\left|b_z(w)\right|
\] be the \emph{pseudo-hyperbolic distance} in $\D$.  $Z$ is
\begin{itemize}
\item \emph{weakly separated} if
\[
\inf_{n\ne k}\rho(z_n, z_k)>0;
\]
\item \emph{uniformly separated}  if
\[
\inf_{n\in\N}\prod_{k\ne n}\rho(z_n, z_k)>0.
\]
\end{itemize}
Carleson proved in \cite{carloss} that $Z$ is interpolating if and only if it is uniformly separated. Later on, \cite{carlocm}, he characterized uniform separation in terms of a measure theoretic condition and weak separation:
\begin{theo}[Carleson]
\label{theo:carlo}
A sequence $Z$ in $\D$ is uniformly separated if and only if it is weakly separated and the measure
\[
\mu_Z:=\sum_{n\in\N}(1-|z_n|^2)\delta_{z_n}
\]
is a Carleson measure for $\h^2(\D)$.
\end{theo}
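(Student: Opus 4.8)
The plan is to treat the two implications separately, since they are of quite different characters: the ``if'' part is a short computation with the Szegő kernel, while the ``only if'' part carries the genuine geometric content of Carleson's theorem.

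\smallskip
\emph{Weak separation together with the Carleson condition implies uniform separation.} Put $a:=\inf_{n\ne k}\rho(z_n,z_k)>0$ and let $C$ be a Carleson constant of $\mu_Z$, i.e. $\int_{\D}|h|^2\,d\mu_Z\le C\|h\|_{\h^2}^2$ for all $h\in\h^2(\D)$. For $\tau\in\D$ write $k_\tau(z)=(1-\overline{\tau}z)^{-1}$ and $\hat k_\tau=(1-|\tau|^2)^{1/2}k_\tau$, so $\|\hat k_\tau\|_{\h^2}=1$ and $\hat k_\tau\in\h^2(\D)$. Using $1-\rho(z,w)^2=(1-|z|^2)(1-|w|^2)|1-\overline{z}w|^{-2}$ one recognises, for each fixed $n$,
\[
\sum_{k}\bigl(1-\rho(z_n,z_k)^2\bigr)=\sum_k(1-|z_k|^2)\,|\hat k_{z_n}(z_k)|^2=\int_{\D}|\hat k_{z_n}|^2\,d\mu_Z\le C ,
\]
and since the $k=n$ term equals $1$ we get $\sum_{k\ne n}(1-\rho(z_n,z_k)^2)\le C$. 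Applying $-\log t\le(1-t)/t\le a^{-2}(1-t)$ (valid for $t\in[a^2,1]$) to $t=\rho(z_n,z_k)^2$,
\[
-2\log\prod_{k\ne n}\rho(z_n,z_k)=\sum_{k\ne n}\bigl(-\log\rho(z_n,z_k)^2\bigr)\le a^{-2}\sum_{k\ne n}\bigl(1-\rho(z_n,z_k)^2\bigr)\le C a^{-2},
\]
so $\prod_{k\ne n}\rho(z_n,z_k)\ge e^{-C/(2a^2)}>0$ uniformly in $n$, as wanted.

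\smallskip
\emph{Uniform separation implies weak separation and the Carleson condition.} Set $\delta:=\inf_n\prod_{k\ne n}\rho(z_n,z_k)>0$. Dropping all but one factor gives $\rho(z_n,z_k)\ge\delta$ for $k\ne n$, so $Z$ is weakly separated (in particular separated with constant $\delta$, and $\sum_n(1-|z_n|)<\infty$). The real task is to show $\mu_Z(S(I))\le A|I|$ for every boundary arc $I$, $S(I)$ the Carleson box over $I$. Fixing $I$, $h=|I|$, we may assume $Z\cap S(I)\ne\emptyset$ and pick $z_*\in Z\cap S(I)$ maximising $1-|z_*|=:\eta h$, $0<\eta\le 1$. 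A direct estimate gives $|1-\overline{z_*}z_n|\le 3h$ and $1-|z_*|^2\ge\eta h$ for all $z_n\in S(I)$, hence, for $z_n\ne z_*$,
\[
1-|z_n|\le\frac{9h}{\eta}\bigl(1-\rho(z_*,z_n)^2\bigr)\le\frac{9h}{\eta}\bigl(-\log\rho(z_*,z_n)^2\bigr),
\]
and summing, now using uniform separation \emph{at $z_*$}, $\mu_Z(S(I))\le 2\sum_{z_n\in S(I)}(1-|z_n|)\lesssim \eta h+\tfrac{h}{\eta}\log\tfrac1\delta$. When $\eta\gtrsim 1$ this is the desired $\mu_Z(S(I))\lesssim_{\delta}|I|$.

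\smallskip
The case of small $\eta$ — all of $Z\cap S(I)$ sitting in the thin region $S(I)\cap\{1-|z|<\eta h\}$, which is covered by $\lesssim 1/\eta$ Carleson boxes of side $\eta h$ — is the one genuinely non-formal point, and the main obstacle. I would handle it by iterating the displayed estimate along a stopping time: at each stage pass to a descendant box carrying a comparable fraction of the remaining $\mu_Z$-mass and apply the bound at its top point, using that $\delta$ controls the multiplicative loss incurred at each step, so that the normalised masses stay bounded by a quantity depending only on $\delta$; this is exactly the classical Carleson-box/Whitney-decomposition analysis, and the point where the full strength of the hypothesis (the product $\prod_{k\ne n}\rho(z_n,z_k)$ rather than mere pairwise separation) is used. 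Alternatively, one may quote \cite{carloss} to know that $Z$ is $\h^\infty(\D)$-interpolating and then invoke the classical fact that the interpolation constant dominates $\sup_I\mu_Z(S(I))/|I|$. Either way, making this step rigorous is where I expect the work to be.
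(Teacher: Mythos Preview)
The paper does not prove Theorem~\ref{theo:carlo}; it is stated as a classical result of Carleson and attributed to \cite{carlocm}, so there is no ``paper's own proof'' to compare against.

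As for your proposal itself: the ``if'' direction is clean and correct. In the ``only if'' direction, weak separation is immediate, and your Carleson-box estimate is correct when the top point $z_*$ satisfies $\eta\gtrsim 1$. The small-$\eta$ case, however, is a genuine gap, and you are right to flag it. Your first suggested fix (a stopping-time iteration over descendant boxes) can be made to work, but note that the na\"ive version of the recursion does not terminate: passing to a child box of side $\eta h$, the new top point may again sit at relative depth $\eta'\ll 1$, and so on. What saves the argument is that each pass to a child box of relative size $\eta$ costs a factor $\log(1/\delta)$ but \emph{gains} geometrically in the box size, so summing the geometric series over generations gives a bound depending only on $\delta$; this needs to be written out, not just asserted. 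Your second suggested fix (quote \cite{carloss} to get $\h^\infty$-interpolation, then deduce the Carleson embedding from boundedness of the solution operator) is the standard textbook route and is logically sound, but be aware that it imports the hard part of Carleson's work, so it is closer to a citation than a proof.
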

Throughout this note, a measure $\mu$  on a domain $D$ will be a Carleson measure for a reproducing kernel Hilbert space $\mathcal{H}_k$ of holomorphic functions on $D$  if
\[
||f||_{\mathrm{L}^2(D, \mu)}\leq C||f||_{\mathcal{H}_k}\qquad f\in\mathcal{H}_k,
\]
for some $C>0$.  Later sections will take $D=\D^d$, the unit polydisc, or $D=\mathbb{B}^d$, the unit ball, respectively: the kernels that we are going to choose for such domains are the Szegö kernel on the polydisc and the Besov-Sobolev kernels on the unit ball.

In certain instances, the randomization of the conditions studied by Carleson become more tractable and provide insight into the structure of interpolating sequences.  Cochran studied in \cite{coch} separation properties of random sequences. A random sequence in the unit disc is defined as follows: let $(\theta_n)_{n\in\N}$ be a sequence of independent random variables, all distributed uniformly in $(0, 2\pi)$ and defined on the same probability space $(\Omega, \A, \p)$. Then, for any choice of a deterministic sequence of radii $(r_n)_{n\in\N}$ approaching $1$ define
\[
\lambda_n(\omega):=r_ne^{i\theta_n(\omega)},\qquad\omega\in\Omega.
\]
Considering the random sequence $\Lambda(\omega)=(\lambda_n(\omega))_{n\in\N}$, the 0-1 Kolmogorov law yields that events such as
\[
\begin{split}
&\mathcal{W}:=\{\Lambda \text{ is weakly separated}\}\\
&\mathcal{U}:=\{\Lambda \text{ is uniformly separated}\}\\
&\mathcal{C}:=\{\mu_\Lambda \text{ is a Carleson measure for $\h^2(\D)$}\}\\
&\mathcal{I}:=\{\Lambda \text{ is an interpolating sequence}\}
\end{split}
\]
have probability zero or one, thanks to the independence of the arguments of the points in $\Lambda$. Let
\begin{equation}
\label{eqn:shells}
I_j:=\{z\in\D:  1-2^{-j}\leq |z|<1-2^{-(j+1)}\}\qquad j\in\N
\end{equation}
 be the $j$th dyadic annulus of $\D$, and let
\begin{equation}
\label{eqn:N}
N_j:= \# \Lambda\cap I_j.
\end{equation}
All the randomness of the sequence is on the arguments of the points in $\Lambda$, and therefore $(N_j)_{j\in\N}$ is a deterministic sequence. Cochran proved in \cite[Th. 2]{coch} that $\p(\mathcal{W})=1$ provided that
\begin{equation}
\label{eqn:sum:finite}
\sum_{j\in\N} N_j^22^{-j}<\infty,
\end{equation}
and that $\p(\mathcal{W})=0$ whenever the sum in \eqref{eqn:sum:finite} diverges. Later on, Rudowicz showed in \cite{rudo} that \eqref{eqn:sum:finite} is a sufficient condition for $\mu_\Lambda$ to be a Carleson measure for $\h^2(\D)$ almost surely, and concluded, thanks to Theorem \ref{theo:carlo}, that $\p(\mathcal{I})=1$ if and only if \eqref{eqn:sum:finite} holds. In particular, condition \eqref{eqn:sum:finite} encodes all those random sequences so that $\mathcal{W}$, $\mathcal{U}$ and $\mathcal{I}$ have all probability one.

The goal of this paper is to study random interpolating sequences on the polydisc and the $d$ dimensional unit ball. A sequence $Z=(z_n)_{n\in\N}$ in $\D^d$ is interpolating for $\h^\infty(\D^d)$ if, given any bounded $(w_n)_{n\in\N}$ in $\C$ there exists a bounded holomorphic function $f$ on $\D^d$ so that $f(z_n)=w_n$, for all $n$. On the polydisc, the deterministic starting point is the following (partial) analogous of Carleson interpolation Theorem for sequences in the polydisc \cite{bern}:
\begin{theo}[Berndtsson, Chang and Lin]
\label{theo:poly:interp}
Let $Z=(z_n)_{n\in\N}$ be a sequence in $\D^d$, and let \textnormal{(a)}, \textnormal{(b)} and \textnormal{(c)} denote the following statements:
\begin{itemize}
\item[(a)]
\begin{equation}
\label{eqn:ss}
\inf_{n\in\N}\prod_{k\ne n}\rho_G(z_n, z_k)>0;
\end{equation}
\item[(b)] $Z$ is interpolating for $\h^\infty(\D^d)$;
\item[(c)] The measure
\[
\mu_Z:=\sum_{n\in\N}\left(\prod_{i=1}^d(1-|z_n^i|^2)\right)\delta_{z_n}
\]
is a Carleson measure for $\h^2(\D^d)$ and
\begin{equation}
\label{eqn:ws}
\inf_{n\ne k}\rho_G(z_n, z_k )>0.
\end{equation}
\end{itemize}
Then \textnormal{(a)}$\implies$\textnormal{(b)}$\implies$\textnormal{(c)}, and none of the converse implications hold.
\end{theo}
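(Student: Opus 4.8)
The statement is from \cite{bern}; I will only sketch the three arguments that it comprises, as they are of quite different natures. Throughout, $\rho_G(z,w)=\max_{1\le i\le d}\rho(z^i,w^i)$ denotes the pseudohyperbolic distance on $\D^d$ appearing in the statement.

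\textbf{(a)$\implies$(b).} Assume $\delta:=\inf_n\prod_{k\ne n}\rho_G(z_n,z_k)>0$. For each ordered pair $n\ne k$ fix a coordinate $i(n,k)$ realising the maximum defining $\rho_G(z_n,z_k)$, and split $\{k:k\ne n\}=\bigsqcup_{i=1}^dS_n^i$ according to the value of $i(n,k)$; then $\prod_{k\in S_n^i}\rho(z_n^i,z_k^i)\ge\delta$, and since $1-\rho(z_n^i,z_k^i)\ge\tfrac12\big(1-|b_{z_k^i}(z_n^i)|^2\big)\ge c_n(1-|z_k^i|^2)$ with $c_n>0$ depending only on $|z_n^i|$, this forces $(z_k^i)_{k\in S_n^i}$ to be a Blaschke sequence. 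Let $B_n^i$ be the associated one-variable Blaschke product, with unimodular constants chosen so that $B_n^i(z_n^i)=\prod_{k\in S_n^i}\rho(z_n^i,z_k^i)$, and put $B_n(z):=\prod_{i=1}^dB_n^i(z^i)$: then $B_n\in\h^\infty(\D^d)$, $\|B_n\|_\infty\le1$, $B_n(z_k)=0$ for $k\ne n$, and $B_n(z_n)=\prod_{k\ne n}\rho_G(z_n,z_k)\ge\delta$. These functions already give weak separation. To upgrade them to a genuine bounded interpolant for a prescribed $(w_n)\in\ell^\infty$ one takes smooth cut-offs $\chi_n$ localised near $z_n$ with $\chi_n(z_n)=1$, forms the non-holomorphic candidate $u:=\sum_nw_n\chi_nB_n/B_n(z_n)$, solves $\bar\partial v=\bar\partial u$ with the $\mathrm L^\infty$ estimate supplied by Berndtsson's weighted $\bar\partial$ technique (the weight being the Carleson-measure control for $\mu_Z$ that $(a)$ produces en route), and sets $f:=u-v$. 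This $\bar\partial$-with-bounds step, together with the Carleson-measure input it relies on, is the part I expect to be the real work.

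\textbf{(b)$\implies$(c).} By the open mapping theorem there is $M<\infty$ so that every $(w_n)\in\ell^\infty$ is interpolated by some $f\in\h^\infty(\D^d)$ with $\|f\|_\infty\le M\|(w_n)\|_\infty$. Interpolating $w_n=1$ and $w_m=0$ for $m\ne n$, and telescoping the one-variable Schwarz--Pick inequality over the $d$ coordinates, gives $1=|f(z_n)-f(z_k)|\le 2dM\,\rho_G(z_n,z_k)$, which is \eqref{eqn:ws}. For the Carleson condition, bounded interpolation is upgraded in the usual way (through $\h^2$-interpolation, via a bounded linear extension operator) to the statement that the normalised Szeg\"o kernels $\{\|k_{z_n}\|^{-1}k_{z_n}\}$ form a Bessel system in $\h^2(\D^d)$; since $\|k_{z_n}\|^{-2}=\prod_i(1-|z_n^i|^2)$, this reads $\sum_n\prod_i(1-|z_n^i|^2)|g(z_n)|^2\le C\|g\|_{\h^2(\D^d)}^2$, i.e. $\mu_Z$ is a Carleson measure for $\h^2(\D^d)$.

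\textbf{Failure of the converses} (for $d\ge2$; when $d=1$, Theorem~\ref{theo:carlo} makes all three statements equivalent). For $(b)\not\Rightarrow(a)$ I would use a ``grid'': pick a uniformly separated sequence $(a_m)_m$ in $\D$ with $a_0=0$ whose moduli satisfy $1-|a_m|\asymp k^{-1}(\log k)^{-2}$ along the decreasing rearrangement (constructed by spreading $\asymp 2^jj^{-2}$ points around each circle $|z|=1-2^{-j}$), and set $Z=\{(a_m,a_l):m,l\in\N\}\subset\D^2$, with the remaining coordinates set to $0$ when $d>2$. Then $Z$ is interpolating for $\h^\infty(\D^d)$ -- interpolate each fibre $(w_{(m,l)})_l$ by $g_m\in\h^\infty(\D)$ with $\sup_m\|g_m\|_\infty<\infty$ and patch via $f=\sum_mg_m(z^2)\varphi_m(z^1)$, where the $\varphi_m$ are one-variable interpolating functions for $(a_m)$ with $\sum_m|\varphi_m|\le C$ -- so $(b)$ holds, and then $(c)$ holds by $(b)\Rightarrow(c)$; but at $z_0=(0,\dots,0)$ one has $-\log\prod_{(m,l)\ne(0,0)}\rho_G(z_0,z_{(m,l)})\ge\sum_{m,l\ge1}\min(1-|a_m|,1-|a_l|)\asymp\sum_kk(1-|a_{(k)}|)\asymp\sum_k(\log k)^{-2}=\infty$, so $\prod_{k\ne n}\rho_G(z_n,z_k)=0$ for that $n$ and $(a)$ fails. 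For $(c)\not\Rightarrow(b)$ one must exploit that for $d\ge2$ the Szeg\"o kernel of $\D^d$ is not a complete Pick kernel -- the feature whose absence in one variable underlies Theorem~\ref{theo:carlo} -- and construct a weakly separated sequence with $\mu_Z$ Carleson but organised into coherent blocks (so that the normalised reproducing kernels, while a Bessel system, fail to be a Riesz sequence, a defect that separation together with the Carleson condition cannot see) along which no bounded interpolant can exist; pinning down a workable such example is the only other point I expect to cost some effort.
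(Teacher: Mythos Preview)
The paper does not prove Theorem~\ref{theo:poly:interp}: it is quoted from \cite{bern} as the deterministic starting point for the polydisc part of the work, and no argument for it is supplied anywhere in the text. There is therefore nothing in this paper to compare your sketch against.

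For what it is worth, your outline is broadly in line with the arguments of \cite{bern}. The device of building $B_n$ from coordinate-wise Blaschke products indexed by the maximising coordinate $i(n,k)$ is exactly the mechanism behind (a)$\Rightarrow$(b) there; your route from (b) to (c) via Schwarz--Pick and the Bessel property of normalised kernels is the standard one; and your grid example for (b)$\not\Rightarrow$(a) is of the same flavour as what the present paper exploits in Proposition~\ref{prop:example} (a Cartesian product of one-variable interpolating sequences, patched together via P.~Beurling functions). The only place where your sketch is genuinely incomplete---as you yourself flag---is the explicit counterexample for (c)$\not\Rightarrow$(b); \cite{bern} produces a concrete such sequence, and filling that in would require going back to that source rather than to this paper.
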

Conditions \eqref{eqn:ss} and \eqref{eqn:ws} are separation conditions, both stated in terms of the so called \emph{Gleason distance} on the polydisc:
\[
\rho_G(w, z):=\max_{i=1,\dots d}\rho(z^i, w^i)\qquad z, w\in\D^d.
\]
Throughout this note, \eqref{eqn:ss} will refer to \emph{uniform separation} on the polydisc, while \eqref{eqn:ws} defines a \emph{weakly separated} sequence on the polydisc.\\
Theorem \ref{theo:poly:interp} represents one of the best known attempts to characterize $\h^\infty(\D^d)$-interpolating sequences on the polydisc in terms of its hyperbolic geometry. One can find a characterization for interpolating sequences for bounded analytic functions on the bi-disc in \cite{agler}, stated in terms of uniform separation conditions on an entire class of reproducing kernels on $\D^2$. The motivation of the first part of this note is to find out whether condition (a),  and (c) of Theorem \ref{theo:poly:interp} are equivalent at least \emph{almost surely}. A negative answer would imply that Theorem \ref{theo:poly:interp} is far from being a characterization. A positive answer would give the 0-1 Kolmogorov law for $\h^\infty(\D^d)$-interpolating sequences in the polydisc with random arguments. The construction of a random sequence $\Lambda$ on the polydisc follows the same outline as for the case of the unit disc. Let $\T^d$ be the $d$-dimensional torus in $\C^d$, and let $(\theta^1_n, \dots, \theta^d_n)_{n\in\N}$ be a sequence of independent and indentically distributed random variables taking values on $\T^d$, all distributed uniformly and defined on the same probability space $(\Omega, \A, \p)$. Let $(r_n)_{n\in\N}$ be a sequence in $[0, 1)^d$, and define a random sequence $\Lambda=(\lambda_n)_{n\in\N}$ in $\D^d$ as
\[
\lambda_n(\omega)=\left(r^1_ne^{i\theta^1_n(\omega)}, \dots, r^d_ne^{i\theta^d_n(\omega)}\right), \qquad\omega\in\Omega.
\]
The events  of interest are going to be
\[
\begin{split}
&\mathcal{W}(\D^d):=\{\Lambda \text{ is weakly separated in $\D^d$}\}\\
&\mathcal{U}(\D^d):=\{\Lambda \text{ is uniformly separated in $\D^d$}\}\\
&\mathcal{C}(\h^2(\D^d)):=\{\mu_\Lambda \text{ is a Carleson measure for $\h^2(\D^d)$}\}\\
&\mathcal{I}(\D^d):=\{\Lambda \text{ is an interpolating sequence for $\h^\infty(\D^d)$}\}.
\end{split}
\]
Our first aim is to give necessary conditions and sufficient conditions for $\Lambda$ to be interpolating for $\h^\infty(\D^d)$ almost surely. This will be achieved by studying separately the probability of the events $\mathcal{W}(\D^d)$, $\mathcal{U}(\D^d)$ and $\mathcal{C}(\h^2(\D^ d))$, and by applying Theorem \ref{theo:poly:interp}. Looking for separation conditions on $(r_n)_{n\in\N}$ that yield almost sure separation properties for $\Lambda$, \eqref{eqn:shells} and \eqref{eqn:N} are extended to the $d$ dimensional case by considering
\begin{equation}
\label{eqn:rectangles}
I_m:=\{z\in\D^d: 1-2^{-m_i}\leq|z^i|<1-2^{-(m_i+1)}, i=1,\dots d\}
\end{equation}
and
\[
N_m= \# \Lambda\cap I_m,
\]
for any multi-index $m=(m_1,\dots,m_d)$ in $\N^d$. Throughout this note, $|m|=m_1+\dots+m_d$ will denote the length of $m$.

The first main result partially extends Cochran's and Rudowicz's works to the polydisc:
\begin{theo}
\label{theo:polydisc}
Let $\Lambda$ be a random sequence in $\D^d$. Then
\begin{itemize}
\item[(i)] If
\begin{equation}
\label{eqn:sum:polydisc}
\sum_{m\in\N^d}N_m^22^{-|m|}<\infty
\end{equation}
then $\p(\mathcal{W}(\D^d))=1$. If the sum in \eqref{eqn:sum:polydisc} diverges, then $\p(\mathcal{W}(\D^d))=0$.\\

\item[(ii)]If
\begin{equation}
\label{eqn:polydisc:strong}
\sum_{m\in\N^d}N_m^{{1+\frac{1}{d}}}2^{-\frac{|m|}{d}}<\infty
\end{equation}
then $\p(\mathcal{U}(\D^d))=1$.\\

\item[(iii)] If \eqref{eqn:sum:polydisc} holds, then $\p(\mathcal{C}(\h^2(\D^d)))=1$.
\end{itemize}
\end{theo}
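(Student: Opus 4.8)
I would organize the proof around the fact that all the randomness sits in the angular variables $\theta_n^i$, which are independent both in $n$ and in $i$, while every event and every random variable to be estimated is built from the one-variable pseudo-hyperbolic distances $\rho(\lambda_n^i,\lambda_k^i)$, and for $n\neq k$ such a distance depends on $(\theta_n^i,\theta_k^i)$ only through the uniformly distributed difference $\theta_n^i-\theta_k^i$. The two one-coordinate computations I would record once and for all, for $\lambda_n\in I_a$ and $\lambda_k\in I_b$, use $1-\rho\asymp 1-\rho^2=\frac{(1-|z|^2)(1-|w|^2)}{|1-\bar z w|^2}$ together with $\frac1{2\pi}\int_0^{2\pi}|1-\sigma e^{it}|^{-s}\,dt\asymp(1-\sigma)^{-(s-1)}$ for $s>1$ (a logarithm for $s=1$, a constant for $s<1$): for fixed $\delta\in(0,1)$,
\[
\p\!\left(\rho(\lambda_n^i,\lambda_k^i)<\delta\right)\lesssim_\delta 2^{-\max(a_i,b_i)}\ \text{ if }|a_i-b_i|\le C_\delta,\qquad =0\ \text{ otherwise,}
\]
and, for $0<\kappa\le 1$,
\[
\E\!\left[\left(1-\rho(\lambda_n^i,\lambda_k^i)\right)^{\kappa}\right]\asymp 2^{-\kappa(a_i+b_i)}\,\E_{t}\!\left[\,\left|1-|\lambda_n^i|\,|\lambda_k^i|\,e^{it}\right|^{-2\kappa}\right].
\]
Since $\rho_G=\max_i\rho$ one multiplies the first estimate over $i$ by independence, and since $1-\rho_G=\min_i(1-\rho)$ one bounds $\min_i$ by the geometric mean and multiplies the second estimate over $i$.

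For part (i), choosing $\delta$ so small that $C_\delta<1$ leaves only pairs inside a common $I_m$ contributing, whence $\sum_{n\neq k}\p(\rho_G(\lambda_n,\lambda_k)<\delta)\lesssim_\delta\sum_m N_m^2\,2^{-|m|}<\infty$; the first Borel--Cantelli lemma then gives almost surely only finitely many $\delta$-close pairs, and since the points are almost surely distinct this yields $\p(\mathcal W(\D^d))=1$. For the divergence statement, by Kolmogorov's $0$--$1$ law it is enough to show that for every rational $\delta>0$ a $\delta$-close pair occurs a.s.; the key observation is that the events $\{\text{$I_m$ contains a $\delta$-close pair}\}$ involve disjoint families of the $\theta$'s and are therefore independent, so the second Borel--Cantelli lemma applies once $\sum_m\p(\cdot)=\infty$ is checked. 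A Paley--Zygmund (second-moment) estimate on the number of $\delta$-close pairs in $I_m$ --- after first partitioning $I_m$ into a $\delta$-dependent bounded number of subrectangles on which all radii are comparable with constant arbitrarily close to $1$ (so that two points in a common subrectangle are genuinely $\delta$-close with probability $\asymp_\delta 2^{-|m|}$) and pigeonholing to find a subrectangle carrying $\gtrsim_\delta N_m$ of the points --- gives $\p(\cdot)\gtrsim_\delta\min(N_m^2\,2^{-|m|},1)$, and $\sum_m\min(N_m^2\,2^{-|m|},1)=\infty$ exactly when $\sum_m N_m^2\,2^{-|m|}=\infty$.

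For part (iii) I would use that for any reproducing kernel Hilbert space and any sequence $(z_n)$ the measure $\sum_n\|k_{z_n}\|^{-2}\delta_{z_n}$ is a Carleson measure if and only if the normalized Grammian $G=\big(\langle k_{z_k},k_{z_n}\rangle\,\|k_{z_n}\|^{-1}\|k_{z_k}\|^{-1}\big)_{n,k}$ is bounded on $\ell^2$ (it is $TT^*$ for the embedding $T\colon f\mapsto(\|k_{z_n}\|^{-1}f(z_n))_n$), and that for $\h^2(\D^d)$ one has $\|k_{\lambda_n}\|^{-2}=\prod_i(1-|\lambda_n^i|^2)$, which are precisely the masses of $\mu_\Lambda$; thus $\mathcal C(\h^2(\D^d))=\{G\text{ bounded}\}$. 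Now $\|G\|\le\|G\|_{\mathrm{HS}}$, and $|G_{nk}|^2=\prod_i\big(1-\rho(\lambda_n^i,\lambda_k^i)^2\big)$, so by independence of the coordinates $\E|G_{nk}|^2=\prod_i\frac{(1-|\lambda_n^i|^2)(1-|\lambda_k^i|^2)}{1-|\lambda_n^i|^2|\lambda_k^i|^2}\asymp\prod_i 2^{-\max(a_i,b_i)}$ for $\lambda_n\in I_a$, $\lambda_k\in I_b$, and therefore
\[
\E\,\|G\|_{\mathrm{HS}}^2\ \asymp\ \sum_{a,b\in\N^d}N_a N_b\prod_i 2^{-\max(a_i,b_i)}.
\]
The plan is to show that the right-hand side is $\lesssim_d\sum_m N_m^2\,2^{-|m|}$: substituting $N_a=u_a 2^{|a|/2}$ it becomes $\sum_{a,b}u_a u_b\prod_i 2^{-|a_i-b_i|/2}=\langle S u,u\rangle$ with $S$ the $d$-fold tensor power of the convolution operator on $\ell^2(\N)$ with kernel $2^{-|x-y|/2}$, which is bounded by a trivial Schur test; hence $\langle Su,u\rangle\le\|S\|\,\|u\|_{\ell^2}^2=\|S\|\sum_m N_m^2\,2^{-|m|}<\infty$. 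Thus under \eqref{eqn:sum:polydisc} one gets $\E\|G\|_{\mathrm{HS}}^2<\infty$, so $\|G\|_{\mathrm{HS}}<\infty$ a.s., so $\mu_\Lambda$ is almost surely a Carleson measure for $\h^2(\D^d)$; this route bypasses entirely the delicate fact that, on the polydisc, Carleson measures are \emph{not} characterized by testing on boxes.

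For part (ii), first note that \eqref{eqn:polydisc:strong} forces $N_m=o(2^{|m|/(d+1)})$, so \eqref{eqn:sum:polydisc} holds and, by part (i), $\Lambda$ is almost surely weakly separated, say with constant $\delta$; on that event $\mathcal U(\D^d)$ coincides with $\{\sup_n S_n<\infty\}$, where $S_n:=\sum_{k\neq n}(1-\rho_G(\lambda_n,\lambda_k))$, because $e^{-S_n}\le\prod_{k\neq n}\rho_G(\lambda_n,\lambda_k)\le e^{-c_\delta S_n}$. Since $\{\sum_n S_n<\infty\}\subseteq\{\sup_n S_n<\infty\}$, it suffices to prove $\E\big[\sum_n S_n\big]=\sum_{n\neq k}\E\big[1-\rho_G(\lambda_n,\lambda_k)\big]<\infty$. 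Bounding $\min_i$ by the geometric mean and using independence, $\E[1-\rho_G(\lambda_n,\lambda_k)]\lesssim\prod_i 2^{-(a_i+b_i)/d}\,\E_t\big[|1-|\lambda_n^i||\lambda_k^i|e^{it}|^{-2/d}\big]$ for $\lambda_n\in I_a,\ \lambda_k\in I_b$; here the integral factor is a constant for $d\ge 3$, a logarithm ($\asymp\min(a_i,b_i)$) for $d=2$, and $\asymp 2^{\min(a_i,b_i)}$ for $d=1$. In each case, summing over $n\neq k$ and using $N_m=o(2^{|m|/(d+1)})$ to absorb the polynomial/logarithmic factors --- together with the same tensor-convolution (self-bounding Cauchy--Schwarz) estimate as in (iii) for the resulting sums --- one checks $\sum_{n\neq k}\E[1-\rho_G(\lambda_n,\lambda_k)]<\infty$ under \eqref{eqn:polydisc:strong}; hence $\sum_n S_n<\infty$ a.s., so $S_n\to 0$, so $\sup_n S_n<\infty$, and $\Lambda$ is almost surely uniformly separated. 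The step I expect to require the most care is this last multi-index bookkeeping in (ii): tracking the exponents $2^{-|m|}$ versus $2^{-|m|/d}$ through the $d$-fold products and checking that the resulting sum is dominated by \eqref{eqn:polydisc:strong} in all dimensions; in (i) the main subtlety is instead that the second-moment argument for the divergence direction must be run without a concentration inequality, which is precisely why the independence of the events across distinct $I_m$ is essential.
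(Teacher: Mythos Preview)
Your overall plan is correct and runs parallel to the paper's, but there are two small slips and several genuine methodological differences.

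\textbf{Slips.} In (i) you cannot make $C_\delta<1$: radii may sit on the common boundary of adjacent dyadic annuli, so for every $\delta>0$ there are pairs with $|a_i-b_i|=1$ and $\rho<\delta$ with positive probability. The fix is routine---sum over the $3^d$ neighbouring boxes (Cauchy--Schwarz on $N_mN_{m'}$), or use shifted annuli as the paper does. In (iii), $\|G\|_{\mathrm{HS}}=\infty$ because the diagonal is all ones; you mean $\|G-I\|_{\mathrm{HS}}$, and then $\|G\|\le1+\|G-I\|_{\mathrm{HS}}$.

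\textbf{Comparison.} In (i) the paper estimates $\p(B_m^c)$ directly as a product $\prod_{j=2}^{M_m}(1-j\,2^{-(|m|+2l+1)})$ and takes logarithms, while you run a Paley--Zygmund second-moment argument on the count of close pairs; both give $\p(\text{close pair in }I_m)\gtrsim\min(N_m^2 2^{-|m|},1)$ and then invoke the second Borel--Cantelli using independence across the $I_m$'s. In (iii) both arguments reduce to $\sum_{n\neq k}\E|S_d(n,k)|^2<\infty$; the paper bounds the resulting double sum by an inductive lemma on $d$, whereas your substitution $u_m=N_m2^{-|m|/2}$, turning the sum into $\langle Su,u\rangle$ with $S$ the $d$-fold tensor power of convolution by $2^{-|\cdot|/2}$ on $\ell^2(\N)$, is a cleaner one-line Schur test. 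The most substantive difference is (ii). You bound $\min_i(1-\rho_i)\le\prod_i(1-\rho_i)^{1/d}$ and then compute $\E[(1-\rho_i)^{1/d}]$ coordinate-wise; this forces a case split on $d$ because $\int_0^{2\pi}|1-\sigma e^{it}|^{-2/d}\,dt$ is bounded only for $d\ge3$ and logarithmic for $d=2$ (your $d=2$ bookkeeping can be completed, e.g.\ via $\min(a_i,b_i)\le\sqrt{a_ib_i}$ and H\"older against $\sum_m 2^{-|m|/2}\prod_i(m_i+1)^{3/2}<\infty$). The paper instead uses the ``sup-to-sum'' trick $\sup_n\sum_{j\ne n}\min_i|S^i(n,j)|^2\le\sum_j\bigl(\sum_k|S_d(k,j)|^2\bigr)^{1/d}$, takes expectation, and applies Jensen $\E[X^{1/d}]\le(\E X)^{1/d}$ to the inner sum; since $\E|S_d(k,j)|^2=\prod_i\E|S^i(k,j)|^2\asymp\prod_i(2^{m_i}+2^{k_i})^{-1}$ independently of $d$, the case split disappears. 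In return, your route proves the stronger statement $\sum_n S_n<\infty$ a.s., i.e.\ $\prod_{k\ne n}\rho_G(\lambda_n,\lambda_k)\to1$, which the paper's argument does not yield directly.
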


Observe that the case $d=1$ yields  Rudowicz's and Cochran's characterization of random interpolating sequences on the unit disc. In general, part (i)  of the above Theorem gives the 0-1 Komolgorov law for a sequence to be weakly separated. In part (ii) and (iii), the result gives a sufficient condition for a sequence to be almost surely uniformly separated and to generate a Carleson measure for the Hardy space in the polydisc. In particular, thanks to Theorem \ref{theo:poly:interp}, it is the case that the 0-1 Kolmogorov law for almost surely interpolating sequences for $\h^\infty(\D^d)$ lies somewhere in between \eqref{eqn:polydisc:strong} and \eqref{eqn:sum:polydisc}:
\begin{coro}
Let $\Lambda$ be a random sequence on $\D^d$. Then
\begin{itemize}
\item[(i)] If \eqref{eqn:polydisc:strong} holds, then $\p(\mathcal{I}(\D^d))=1$;
\item[(ii)] If the sum in \eqref{eqn:sum:polydisc} diverges, then $\p(\mathcal{I}(\D^d))=0$.
\end{itemize}
\end{coro}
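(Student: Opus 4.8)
The plan is to deduce the corollary directly by combining the deterministic implications of Theorem~\ref{theo:poly:interp} with the probabilistic estimates of Theorem~\ref{theo:polydisc}: no new work is needed beyond tracking the right inclusions of events in $\Omega$. The key observation is that Theorem~\ref{theo:poly:interp} is a statement about an arbitrary deterministic sequence, so it can be applied pointwise to $\Lambda(\omega)$ for each fixed $\omega\in\Omega$.

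First I would record the two set-theoretic inclusions that follow. Since condition (a) implies condition (b), every $\omega$ for which $\Lambda(\omega)$ satisfies \eqref{eqn:ss} is one for which $\Lambda(\omega)$ is $\h^\infty(\D^d)$-interpolating; hence $\mathcal{U}(\D^d)\subseteq\mathcal{I}(\D^d)$. Likewise, since (b) implies (c) and (c) in particular forces the weak separation \eqref{eqn:ws}, we get $\mathcal{I}(\D^d)\subseteq\mathcal{W}(\D^d)$. Now for part (i): assuming \eqref{eqn:polydisc:strong}, Theorem~\ref{theo:polydisc}(ii) gives $\p(\mathcal{U}(\D^d))=1$, and the first inclusion yields $\p(\mathcal{I}(\D^d))=1$. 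For part (ii): if the sum in \eqref{eqn:sum:polydisc} diverges, Theorem~\ref{theo:polydisc}(i) gives $\p(\mathcal{W}(\D^d))=0$, and the second inclusion forces $\p(\mathcal{I}(\D^d))=0$.

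There is no real obstacle here; the content is entirely in Theorem~\ref{theo:polydisc}, and the corollary is essentially a bookkeeping consequence of it together with Theorem~\ref{theo:poly:interp}. The only point worth flagging is that the two conditions \eqref{eqn:polydisc:strong} and \eqref{eqn:sum:polydisc} do not match for $d\geq2$ (they coincide only when $d=1$, where one recovers the Cochran--Rudowicz dichotomy), so this argument leaves a genuine gap in the $0$--$1$ law for $\mathcal{I}(\D^d)$. Closing it would require either sharpening the deterministic Theorem~\ref{theo:poly:interp} toward an actual characterization or improving the probabilistic bound governing $\mathcal{U}(\D^d)$, and, as the discussion preceding the corollary indicates, it is the former that is expected to be the hard part.
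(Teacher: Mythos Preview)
Your proposal is correct and matches the paper's approach: the corollary is presented there as an immediate consequence of Theorem~\ref{theo:poly:interp} and Theorem~\ref{theo:polydisc}, via exactly the inclusions $\mathcal{U}(\D^d)\subseteq\mathcal{I}(\D^d)\subseteq\mathcal{W}(\D^d)$ that you spell out. The paper does not give a separate written proof, so your argument is in fact more explicit than what appears there.
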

Proposition \ref{prop:example} will give an example of a class of random sequences for which the 0-1 Kolmogorov law for almost surely $\h^\infty(\D^d)$-interpolating sequences coincides with the sum in \eqref{eqn:sum:polydisc}. Whether this is the case for a general choice of the radii $(r_n)_{n\in\N}$ remain, for us, open. Nevertheless, we will observe in Section \ref{sec:hs} how \eqref{eqn:sum:polydisc} implies that the Szegö Grammian for a random sequence in the polydisc differs from the identity only by a Hilbert-Schmidt operator, a rather strong separation condition for the random kernel functions in the Hardy space associated to $\Lambda$. In particular, this will give the $0-1$ law for a random sequence $\Lambda$ to be interpolating for $\h^2(\D^d)$. In the deterministic setting, a sequence $(z_n)_{n\in\N}$ on $\D^d$ is interpolating for $\h^2(\D^d)$ if the map
\[
f\in\h^2(\D^d)\mapsto \left(\prod_{i=1}^ d\sqrt{1-|z_n^ i|^2}f(z_n)\right)_{n\in\N}\in l^2
\]
is surjective and bounded. This, in particular, is equivalent of asking that the Szegö Grammian associated to $(z_n)_{n\in\N}$ is bounded above and below.\\
Given a random sequence $\Lambda$ in $\D^d$, let
\[
\tilde{\mathcal{I}}(\D^d):=\{\Lambda\,\text{is interpolating for}\,\h^2(\D^d)\}.
\]
Any $\h^\infty(\D^d)$-interpolating sequence on $\D^d$ is also $\h^2(\D^d)$-interpolating, and the converse does not hold, since $\h^2(\D^d)$ has not the Pick property (for an example of a sequence which is $\h^2(\D^2)$-interpolating but not $\h^\infty(\D^2)$-interpolating, see \cite{amar}). Therefore, $\mathcal{I}(\D^d)\subseteq\tilde{\mathcal{I}}(\D^d)$. We show that $\tilde{\mathcal{I}}(\D^d)$ has the same $0-1$ law of $\mathcal{W}(\D^d)$:
\begin{theo}
\label{theo:intpolyh2}
Let $\Lambda$ be a random sequence in $\D^d$. Then
\[
\p(\tilde{\mathcal{I}}(\D^d))=\begin{cases}
0\quad&\text{if}\quad\sum_{m\in\N^d}N_m^22^{-|m|}=\infty\\
1\quad&\text{if}\quad\sum_{m\in\N^d}N_m^22^{-|m|}<\infty
\end{cases}.
\]
\end{theo}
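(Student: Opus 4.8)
The plan is to deduce both directions from the characterization of $\h^2(\D^d)$-interpolation via the Szegő Grammian, together with the $0$-$1$ law for weak separation established in Theorem \ref{theo:polydisc}(i). Recall that $(z_n)$ is $\h^2(\D^d)$-interpolating exactly when the normalized Szegő reproducing kernels $k_{z_n}/\|k_{z_n}\|$ form a Riesz sequence, i.e. the Grammian $G=\bigl(\langle k_{z_n},k_{z_k}\rangle/(\|k_{z_n}\|\|k_{z_k}\|)\bigr)_{n,k}$ is bounded above and below on $\ell^2$. On $\D^d$ one computes explicitly
\[
\frac{|\langle k_{\lambda_n},k_{\lambda_k}\rangle|^2}{\|k_{\lambda_n}\|^2\|k_{\lambda_k}\|^2}=\prod_{i=1}^d\frac{(1-|\lambda_n^i|^2)(1-|\lambda_k^i|^2)}{|1-\overline{\lambda_n^i}\lambda_k^i|^2},
\]
which is the ``multi-disc'' analogue of the quantity governing uniform separation. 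The off-diagonal entries of $G-I$ therefore have magnitudes whose squares sum to a random series $S:=\sum_{n\ne k}\prod_i\tfrac{(1-|\lambda_n^i|^2)(1-|\lambda_k^i|^2)}{|1-\overline{\lambda_n^i}\lambda_k^i|^2}$, and the point is to show that $\E[S]\asymp\sum_{m}N_m^2 2^{-|m|}$, up to the diagonal contribution.

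For the sufficiency direction I would argue as follows. Taking expectations over the independent uniformly distributed arguments $\theta_n^i$, the standard estimate $\int_{\T}\frac{1-r^2}{|1-\overline{re^{i\alpha}}w|^2}\,\frac{d\alpha}{2\pi}\asymp \min\{1,\tfrac{1-r^2}{1-|w|}\}$ (applied coordinate-wise and using independence across $i$) shows that $\E[S]$ is comparable to a deterministic sum that, after grouping the points of $\Lambda$ into the dyadic boxes $I_m$ of \eqref{eqn:rectangles}, is dominated by $\sum_{m\in\N^d}N_m^2 2^{-|m|}$ — this is exactly the Hilbert--Schmidt bound on $G-I$ alluded to in the paragraph preceding the statement. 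Hence, when \eqref{eqn:sum:polydisc} converges, $S<\infty$ almost surely, so $G-I$ is Hilbert--Schmidt, in particular compact; combined with almost sure weak separation (Theorem \ref{theo:polydisc}(i)), which guarantees that the diagonal is bounded away from zero after normalization and that $G$ has no kernel, a Fredholm/compact-perturbation argument gives that $G$ is bounded above and below almost surely. Thus $\p(\tilde{\mathcal I}(\D^d))=1$.

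For the necessity direction, suppose \eqref{eqn:sum:polydisc} diverges. By Theorem \ref{theo:polydisc}(i) we then have $\p(\mathcal W(\D^d))=0$, i.e. $\Lambda$ fails to be weakly separated almost surely; but a non-weakly-separated sequence cannot be $\h^2(\D^d)$-interpolating, since weak separation is necessary for the normalized kernels to be a Riesz sequence (two kernels with pseudohyperbolic/Gleason distance tending to zero have normalized inner product tending to $1$, violating the lower Riesz bound). Therefore $\tilde{\mathcal I}(\D^d)\subseteq\mathcal W(\D^d)$ up to a null set, giving $\p(\tilde{\mathcal I}(\D^d))=0$. I expect the main obstacle to be the sufficiency half: precisely, upgrading ``$G-I$ compact and $G$ injective with bounded diagonal'' to ``$G$ invertible'' requires care, since compactness plus injectivity alone does not force invertibility on an infinite-dimensional space — one must use that $G\ge 0$ and that the essential spectrum of $G$ is $\{1\}$ (because $G-I$ is compact), so $0$ can only be an isolated eigenvalue of finite multiplicity, and then rule it out using weak separation, perhaps by passing to tails of the sequence where the Hilbert--Schmidt norm of $G-I$ is $<1$ and handling the finite initial block separately. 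Controlling the exact comparability $\E[S]\asymp\sum_m N_m^2 2^{-|m|}$ across the product structure, rather than just an upper bound, is the other technical point that needs the independence of the $d$ coordinates to be used cleanly.
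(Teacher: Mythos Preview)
Your plan is correct and matches the paper's argument: sufficiency goes via the almost-sure Hilbert--Schmidt bound on $G-I$ (obtained from the expectation computation in Lemma~\ref{lemma:exp} and Remark~\ref{rem:strong} together with Corollary~\ref{coro:strong} at $s=1$, exactly as in the proof of Theorem~\ref{theo:polydisc}(iii)) followed by Lemma~\ref{lemma:Grammian}, and necessity via $\tilde{\mathcal I}(\D^d)\subseteq\mathcal W(\D^d)$ and Theorem~\ref{theo:polydisc}(i). Two small notes: you only need the upper bound $\E[S]\lesssim\sum_m N_m^22^{-|m|}$, not two-sided comparability, since the necessity half runs through weak separation rather than through a lower bound on $S$; and the paper's Lemma~\ref{lemma:Grammian} takes invertibility of $G$ as a hypothesis without further justification, so your proposed tail argument (choose $N$ with $\|G^{(N)}-I\|_{HS}<1$, hence $G^{(N)}$ invertible, then adjoin the finite weakly-separated initial block) is a welcome way to make that step rigorous.
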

\,
\\

Related questions about interpolation for function spaces on the unit ball in $\C^d$ are also considered.  The authors in \cite{wick2019} studied the interpolating sequences in the Dirichlet spaces over the unit disc and this serves as some of the motivation for the results in the ball.  Section \ref{sec:ball}, will generalize some theorems in \cite{wick2019} to the unit ball. Because the generalization of the Dirichlet spaces is the Besov-Sobolev spaces, random interpolating sequences in the Besov-Sobolev spaces $B_{2}^{\sigma}\left(\mathbb{B}_{d}\right)$ are studied, where $0<\sigma<\infty$. In \cite{Arcozzi}, a characterization of interpolating sequence in the Besov-Sobolev spaces in the case of $0<\sigma\leq\frac{1}{2}$ was given.  Because a characterization exists only in this range, that is the case focused upon in this paper.

Let $\mathbb{B}_{d}$ be the unit ball in $\mathbb{C}^{d}$. Let $d z$ be Lebesgue measure on $\mathbb{C}^{d}$ and let $d \lambda_{d}(z)=\left(1-|z|^{2}\right)^{-d-1} d z$ be the invariant measure on the ball. For an integer $m \geq 0,$ and for $0 < \sigma<\infty, 1<p<\infty, m+\sigma>d / p$ define the analytic Besov-Sobolev spaces $B_{p}^{\sigma}\left(\mathbb{B}_{d}\right)$ to consist of those holomorphic functions $f$ on the ball such that
$$\|f\|_{B_{p}^{\sigma}\left(\mathbb{B}_{d}\right)}^{p}=\left\{\sum_{k=0}^{m-1}\left|f^{(k)}(0)\right|^{p}
+\int_{\mathbb{B}_{d}}\left|\left(1-|z|^{2}\right)^{m+\sigma} f^{(m)}(z)\right|^{p} d \lambda_{d}(z)\right\}^{\frac{1}{p}}<\infty.$$
Here $f^{(m)}$ is the $m^{t h}$ order complex derivative of $f .$ The spaces $B_{p}^{\sigma}\left(\mathbb{B}_{d}\right)$ are independent of $m$ and are Banach spaces.
A Carleson measure for $B_{p}^{\sigma}\left(\mathbb{B}_{d}\right)$ is a positive measure defined on $\mathbb{B}_{d}$ such that the following Carleson embedding holds for $f \in B_{p}^{\sigma}\left(\mathbb{B}_{d}\right)$
$$
\int_{\mathbb{B}_{d}}|f(z)|^{p} d \mu \leq C_{\mu}\|f\|_{B_{p}^{\sigma}\left(\mathbb{B}_{d}\right)}^{p}.
$$
Given $\sigma$ with $0 < \sigma\leq1 / 2$ and a discrete set $Z=\left\{z_{i}\right\}_{i=1}^{\infty} \subset \mathbb{B}_{d}$ define the associated measure $\mu_{Z}=\sum_{j=1}^{\infty}\left(1-\left|z_{j}\right|^{2}\right)^{2 \sigma} \delta_{z_{j}} $.  $Z$ is an interpolating sequence for $B_{2}^{\sigma}\left(\mathbb{B}_{d}\right)$ if the restriction map $R$ defined by $R f\left(z_{i}\right)=f\left(z_{i}\right)$ for $z_{i} \in Z \operatorname{maps} B_{2}^{\sigma}\left(\mathbb{B}_{d}\right)$ into and onto $\ell^{2}\left(Z, \mu_{Z}\right) .$

\begin{theo}\label{interpolation}
Given $\sigma$ with $0 < \sigma\leq 1/2$ and $\mu_{Z}=\sum_{j=1}^{\infty}\left(1-\left|z_{j}\right|^{2}\right)^{2 \sigma} \delta_{z_{j}} .$ Then Z is an interpolating sequence for $B_{2}^{\sigma}\left(\mathbb{B}_{d}\right)$ if and only if  $Z$ satisfies the weak separation separation condition $\inf _{i \neq j} \beta\left(z_{i}, z_{j}\right)>0$ and $\mu_{Z}$ is a $B_{2}^{\sigma}\left(\mathbb{B}_{d}\right)$ Carleson measure.
\end{theo}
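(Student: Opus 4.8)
The plan is to deduce the characterization from the fact that, in the range $0<\sigma\le 1/2$, the space $B_{2}^{\sigma}\left(\mathbb{B}_{d}\right)$ carries a \emph{normalized complete Pick kernel}. After replacing the norm above by the equivalent reproducing kernel norm (this changes neither the interpolating sequences nor the Carleson measures, only the constants), $B_{2}^{\sigma}\left(\mathbb{B}_{d}\right)$ is the reproducing kernel Hilbert space with kernel $k^{\sigma}_{w}(z)=(1-\langle z,w\rangle)^{-2\sigma}$; since $0<2\sigma\le 1$, the Taylor coefficients of $1-(1-t)^{2\sigma}$ are all non-negative, so $1-1/k^{\sigma}$ is a positive kernel and $k^{\sigma}$ is indeed a normalized complete Pick kernel. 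Two elementary identities then provide a dictionary between the abstract conditions and the ones in the statement. First, $\|k^{\sigma}_{w}\|^{2}=k^{\sigma}_{w}(w)=(1-|w|^{2})^{-2\sigma}$, so $\mu_{Z}=\sum_{j}\|k^{\sigma}_{z_{j}}\|^{-2}\delta_{z_{j}}$ is exactly the canonical measure attached to $Z$. Second, writing $\hat k^{\sigma}_{w}=k^{\sigma}_{w}/\|k^{\sigma}_{w}\|$ and using the M\"obius identity $1-|\varphi_{z}(w)|^{2}=\frac{(1-|z|^{2})(1-|w|^{2})}{|1-\langle z,w\rangle|^{2}}$, one gets $|\langle\hat k^{\sigma}_{z},\hat k^{\sigma}_{w}\rangle|=\bigl(1-|\varphi_{z}(w)|^{2}\bigr)^{\sigma}$, so the abstract weak separation $\sup_{i\ne j}|\langle\hat k^{\sigma}_{z_{i}},\hat k^{\sigma}_{z_{j}}\rangle|<1$ is equivalent to $\inf_{i\ne j}|\varphi_{z_{i}}(z_{j})|>0$, hence, $\beta$ being an increasing function of $|\varphi_{z}(w)|$, to $\inf_{i\ne j}\beta(z_{i},z_{j})>0$.

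The necessity direction should be soft. By definition $Z$ interpolating means the restriction map $R\colon f\mapsto(f(z_{j}))_{j}$ is bounded from $B_{2}^{\sigma}\left(\mathbb{B}_{d}\right)$ onto $\ell^{2}(Z,\mu_{Z})$. Boundedness is literally the Carleson embedding $\sum_{j}|f(z_{j})|^{2}(1-|z_{j}|^{2})^{2\sigma}\le C\|f\|^{2}$, i.e. $\mu_{Z}$ is a $B_{2}^{\sigma}\left(\mathbb{B}_{d}\right)$ Carleson measure. For weak separation, under the obvious isometry $\ell^{2}(Z,\mu_{Z})\cong\ell^{2}$ the map $R$ becomes $f\mapsto(\langle f,\hat k^{\sigma}_{z_{j}}\rangle)_{j}$, whose adjoint is the synthesis operator $(c_{j})\mapsto\sum_{j}c_{j}\hat k^{\sigma}_{z_{j}}$; surjectivity of $R$ forces this synthesis operator to be bounded below, and testing it on the vectors $e_{i}-\langle\hat k^{\sigma}_{z_{i}},\hat k^{\sigma}_{z_{j}}\rangle e_{j}$ gives $1-|\langle\hat k^{\sigma}_{z_{i}},\hat k^{\sigma}_{z_{j}}\rangle|^{2}\ge\delta>0$ uniformly over $i\ne j$, which by the dictionary above is precisely $\inf_{i\ne j}\beta(z_{i},z_{j})>0$.

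The content of the theorem is the sufficiency: given $\inf_{i\ne j}\beta(z_{i},z_{j})>0$ and $\mu_{Z}$ a Carleson measure, one must produce a bounded right inverse of $R$. This is precisely the interpolation theorem of \cite{Arcozzi} in the range $0<\sigma\le 1/2$, and I would invoke it; alternatively one reproduces its proof through the Bergman-tree model of $\mathbb{B}_{d}$, in which the weak separation hypothesis lets one distribute the points of $Z$ among the vertices of the dyadic Bergman tree with a uniformly bounded number of points per tree-ball, and the Carleson hypothesis on $\mu_{Z}$ becomes a tree-Carleson condition for the induced weights; an atomic/corona-type construction on the tree then yields multipliers $\varphi_{i}$ with $\varphi_{i}(z_{j})=\delta_{ij}$ and $\bigl\|\sum_{i}|\varphi_{i}|^{2}\bigr\|_{\infty}<\infty$, after which $f_{w}:=\sum_{i}w_{i}(1-|z_{i}|^{2})^{\sigma}\varphi_{i}\hat k^{\sigma}_{z_{i}}$ solves the interpolation problem with $\|f_{w}\|\lesssim\|w\|_{\ell^{2}(Z,\mu_{Z})}$. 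Since $k^{\sigma}$ is a complete Pick kernel one may instead bypass the tree entirely and quote the abstract chain of equivalences, valid in every normalized complete Pick space, between ``$Z$ is interpolating for the space'', ``$Z$ is interpolating for its multiplier algebra'', and ``$Z$ is weakly separated and $\mu_{Z}$ is a Carleson measure''; here the Pick property is exactly what makes the implication ``interpolating for $B_{2}^{\sigma}\left(\mathbb{B}_{d}\right)\Rightarrow$ interpolating for $\mathrm{Mult}\,B_{2}^{\sigma}\left(\mathbb{B}_{d}\right)$'' available, an implication that fails for spaces without it such as $\h^{2}(\D^{d})$. I expect the only real obstacle to be this sufficiency step: everything else reduces to the M\"obius identity and elementary Hilbert-space bookkeeping, whereas the construction of the interpolating multipliers --- equivalently, the solution of the pertinent matrix Pick problem with data controlled by the Carleson constant --- is where the analysis is concentrated, and for that I would ultimately rely on \cite{Arcozzi}.
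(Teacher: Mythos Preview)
Your proposal is correct and, at bottom, follows the same strategy as the paper: both proofs defer to the literature. The paper's argument is two sentences long --- it cites \cite{Arcozzi} for $0<\sigma<1/2$ and then, for $\sigma=1/2$, invokes the complete Pick property of $B_2^{1/2}(\mathbb{B}_d)$ together with \cite{john2017}. Your write-up is more expansive: you supply a self-contained necessity argument via the synthesis operator and note (correctly) that $k^{\sigma}_w(z)=(1-\langle z,w\rangle)^{-2\sigma}$ is a normalized complete Pick kernel for the \emph{entire} range $0<\sigma\le 1/2$, so the abstract Pick characterization of interpolating sequences from \cite{john2017} could handle all $\sigma$ at once, not just the endpoint. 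This is a nice unification that the paper does not make explicit, but it is an organizational rather than mathematical departure --- the sufficiency step still rests on either \cite{Arcozzi} or \cite{john2017}, exactly as in the paper.
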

\begin{proof}When $0 < \sigma<1 / 2$, this theorem is given by \cite[Theorem 3]{Arcozzi}. When $\sigma=1/2$,  since $B_{2}^{1/2}\left(\mathbb{B}_{d}\right)$ has the complete Pick property, we obtain the theorem by \cite[Theorem 1.1]{john2017}.
\end{proof}
Namely, in a different fashion with respect the polydisc case, the deterministic setting for the Besov-Sobolev space has its interpolating sequences well-understood and characterized by weak separation and a Carleson measure condition. Therefore, in order to find the 0-1 Kolmogorov law for interpolating sequences for $\mathbb{B}_2^\sigma$, it suffices to find the cut-off conditions on the detrministic radii for the associated sequence with randomly chosen arguments to be weakly separated and to generate a Carleson measure almost surely. This is the intent of the second part of our work. Random sequences in the unit ball are constructed as follows. Let $\Lambda(\omega)=\left\{\lambda_{j}\right\}$ with $\lambda_{j}=\rho_{j} \xi_j(\omega)$ where $\xi_{j}(\omega)$ is a sequence of independent
random variables, all uniformly distributed on the unit sphere and $\rho_{j} \in[0,1)$ is a sequence of a priori fixed radii.
There is an interesting thing about the random interpolating sequences in the Besov-Sobolev spaces on the unit ball. As we will see, for $d\geq2$ a random sequence $\{\lambda_n\}$ is an interpolating sequence almost surely if and only if $\sum_{n}(1-|\lambda_n|)\delta_{\lambda_n}$ is a Carleson measure on $B_{2}^{\sigma}\left(\mathbb{B}_{d}\right)$ almost surely. Moreover, the characterization for almost surely interpolating sequences is strictly stronger that the characterization for almost surely weakly separated sequences.

For any $m\in \N$, let
$$N_m= \# \{\lambda_j\in \Lambda(\omega): m\frac{\ln 2}{2} \leq \beta(0,\lambda_j)<(m+1)\frac{\ln 2}{2}  \},$$
where $\beta$ is the Bergman metric on the unit ball $\mathbb{B}_{d}$ in $\mathbb{C}^{d}$.
Let
$$\mathcal{I}(B_{2}^{\sigma}\left(\mathbb{B}_{d}\right)):=\{\omega:\Lambda(\omega) \text{ is an interpolating sequence for $B_{2}^{\sigma}\left(\mathbb{B}_{d}\right)$}\}.$$
The following result is obtained regarding a 0-1 Komolgorov law for interpolating sequences on the unit ball. We only work on the case of $0 < \sigma\leq 1/2$ and $d\geq2$.
When $\sigma=d/2$, it is well-known that $B_{2}^{d/2}\left(\mathbb{B}_{d}\right)$ is the Hardy space. By \cite[Theorem 3.3]{Massaneda}, we know  that
$$\p\{\mathcal{I}(B_{2}^{d/2}\left(\mathbb{B}_{d}\right))\}=1 \text{ if and only if } \sum_{m=0}^{\infty}2^{-m}N^2_m<\infty.$$
When $d=1$ and $0<\sigma\leq1/4$, by $(i)$ in \cite[Theorem 1.5]{wick2019} we know that
$$\p\{\mathcal{I}(B_{2}^{\sigma}\left(\mathbb{D}\right))\}=1 \text{ if and only if } \sum_{m=0}^{\infty}2^{-2\sigma m}N_m<\infty.$$
When $d=1$ and $1/4<\sigma<1/2$, by $(ii)$ in \cite[Theorem 1.5]{wick2019} we know that
$$\p\{\mathcal{I}(B_{2}^{\sigma}\left(\mathbb{D}\right))\}=1 \text{ if and only if } \sum_{m=0}^{\infty}2^{-m}N^2_m<\infty.$$
In our case, we have the follows.
\begin{theo}\label{interpolation:besove}
Let $0 < \sigma\leq 1/2$ and $d\geq2$. Then
\begin{itemize}
\item[(i)] If $$\sum_{m=0}^{\infty}2^{-2\sigma m}N_m<\infty,$$ then $\p\{\mathcal{I}(B_{2}^{\sigma}\left(\mathbb{B}_{d}\right))\}=1$;
\item[(ii)] If $$\sum_{m=0}^{\infty}2^{-2\sigma m}N_m=\infty,$$ then $\p\{\mathcal{I}(B_{2}^{\sigma}\left(\mathbb{B}_{d}\right))\}=0$.
\end{itemize}
\end{theo}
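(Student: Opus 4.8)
The plan is to reduce everything to the characterization in Theorem \ref{interpolation}: a sequence is interpolating for $B_2^\sigma(\mathbb{B}_d)$ iff it is weakly separated and $\mu_Z$ is a $B_2^\sigma(\mathbb{B}_d)$-Carleson measure. Since the randomness is in the spherical arguments $\xi_j$ and the radii $\rho_j$ are fixed, all the relevant events (weak separation, Carleson measure, interpolation) are tail events with respect to the independent $\xi_j$'s, so by Kolmogorov's 0-1 law each has probability $0$ or $1$; thus we only need to identify the cutoff. I would organize the argument around the two conditions separately and then combine.

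First, for the Carleson measure condition: I would show that $\sum_m 2^{-2\sigma m} N_m < \infty$ implies $\mu_\Lambda$ is almost surely a $B_2^\sigma(\mathbb{B}_d)$-Carleson measure. The natural tool is the known geometric characterization of Carleson measures for $B_2^\sigma$ in terms of sums over Whitney/Bergman boxes (for $\sigma \le 1/2$ these are the ``tree condition'' or the simpler box condition), together with a first-moment/Borel–Cantelli estimate: the probability that two or more of the randomly placed points land in a common Carleson box of scale $2^{-m}$ is controlled by $N_m^2$ times the normalized volume of such a box, which is comparable to $2^{-dm}\cdot 2^{(d-1)m}\sim$ (appropriate power), and the weight $(1-|\lambda_j|^2)^{2\sigma}\sim 2^{-2\sigma m}$ enters to give exactly the series $\sum 2^{-2\sigma m} N_m$. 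The point (special to $d \ge 2$) is that this single sum simultaneously forces the Carleson condition; the power $2^{-2\sigma m} N_m$ rather than $2^{-m} N_m^2$ reflects that in higher dimension the spheres are ``large'' and collisions are rare. Conversely, if $\sum 2^{-2\sigma m} N_m = \infty$ I would show $\mu_\Lambda$ fails to be Carleson a.s., by testing the embedding against reproducing kernels (or normalized kernels) at points of the sequence and using the independence of the arguments together with a second-moment/Paley–Zygmund argument to show the test sums diverge with positive (hence full) probability.

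Second, for weak separation $\inf_{i\ne j}\beta(\lambda_i,\lambda_j)>0$: here I would invoke the analogue of Cochran's theorem on the ball, whose cutoff is expected to be $\sum_m 2^{-(2d-1)m} N_m^2<\infty$ or similar; the key observation is that this ``weak separation'' series is \emph{weaker} than $\sum 2^{-2\sigma m} N_m<\infty$ for $\sigma\le 1/2$ and $d\ge 2$, so whenever the Carleson series converges the sequence is automatically weakly separated almost surely. That is why the single condition $\sum 2^{-2\sigma m}N_m<\infty$ suffices for (i): it gives both weak separation and the Carleson property a.s., hence interpolation by Theorem \ref{interpolation}. For (ii), divergence of $\sum 2^{-2\sigma m}N_m$ kills the Carleson condition a.s. by the converse direction above, and since a Carleson measure condition is necessary for interpolation, $\p(\mathcal{I}(B_2^\sigma(\mathbb{B}_d)))=0$.

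The main obstacle I anticipate is the converse (necessity) direction of the Carleson estimate when $\sum 2^{-2\sigma m} N_m=\infty$: one must produce, almost surely, a function in $B_2^\sigma(\mathbb{B}_d)$ (or a sequence of test functions) witnessing the failure of the embedding, and control the randomness carefully — a first-moment bound only shows the ``expected mass'' is infinite, so a second-moment or concentration argument is needed to pass from divergence in expectation to divergence almost surely, and one has to handle the dependence between different boxes coming from the global norm of a single test function. A secondary technical point is making the box-counting geometry on the sphere precise — choosing the right Whitney decomposition adapted to the non-isotropic (Koranyi) balls so that the volume of a scale-$2^{-m}$ box is correctly $\sim 2^{-(d+1)m}\cdot 2^{dm}$-type and matches the weight $2^{-2\sigma m}$ — but this is routine once the decomposition is fixed. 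I would also double-check the $\sigma = 1/2$ endpoint separately, where Theorem \ref{interpolation} relies on the complete Pick property via \cite{john2017} rather than on \cite{Arcozzi}, though the geometric/probabilistic estimates are unaffected.
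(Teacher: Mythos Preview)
Your overall strategy---reduce to Theorem~\ref{interpolation} and then handle the Carleson measure condition and weak separation separately---is exactly what the paper does. The sufficiency side is correct in outline: the paper proves the Carleson property under $\sum_m 2^{-2\sigma m}N_m<\infty$ via a tree/Whitney decomposition (specifically the ``strengthened simple condition'' of Lemma~\ref{carleson}) combined with a Borel--Cantelli argument, and then observes that this same hypothesis forces $\sum_m 2^{-dm}N_m^2<\infty$ (the actual weak-separation cutoff from Lemma~\ref{separation}, not the exponent $2d-1$ you wrote), because $\sum_m 2^{-2\sigma m}N_m<\infty$ implies $\sum_m 2^{-4\sigma m}N_m^2<\infty$ and $4\sigma\le d$ when $\sigma\le 1/2$, $d\ge 2$.

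Where you go astray is in part (ii). You flag the failure of the Carleson condition under $\sum_m 2^{-2\sigma m}N_m=\infty$ as the main obstacle, proposing kernel testing and a Paley--Zygmund/second-moment argument to upgrade ``divergence in expectation'' to ``divergence almost surely.'' But there is nothing random here: the total mass
\[
\mu_\Lambda(\mathbb{B}_d)=\sum_{j}(1-|\lambda_j|^2)^{2\sigma}=\sum_j (1-\rho_j^2)^{2\sigma}\simeq\sum_m 2^{-2\sigma m}N_m
\]
depends only on the deterministic radii $\rho_j$, not on the random arguments $\xi_j$. So if the series diverges, the measure has infinite mass for \emph{every} $\omega$, and testing the Carleson embedding on the constant function $1\in B_2^\sigma(\mathbb{B}_d)$ already fails. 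No concentration argument is needed, and this is precisely the paper's one-line proof of (ii).
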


Section \ref{sec:pre} will construct the necessary technical tools for the proof of our main results. Section \ref{sec:polydisc} provides the proof of Theorem \ref{theo:polydisc} and Theorem \ref{theo:intpolyh2}, and characterize random interpolating sequences for $\h^\infty(\D^d)$ for some specific choice of the radii in $(r_n)_{n\in\N}$. Finally, Section \ref{sec:ball} proves Theorem \ref{interpolation:besove} and studies the uniform separation on the unit ball.\\

We would like to thank Nikolaos Chalmoukis for some useful comments that led to the final version of Theorem \ref{theo:polydisc}. We would also like to thank the referees for their valuable suggestions.

\section{Preliminary Results}
\label{sec:pre}
This section contains relatively general results that are going to be used throughout the proof of Theorem \ref{theo:polydisc}.  Deterministic and probabilistic tools will be separately analyzed.

\subsection{Deterministic tools}

Double sums are extensively used throughout this work. In particular the fact that, for a certain class of double sums involving exponential decay, the terms of the sums on their diagonals contain all the necessary informations to bound the whole sums:

\begin{lemma}
\label{lemma:strong:ineq}
Let $s\geq1$,  and let $(A_m)_{m\in\N}$ and $(B_k)_{k\in\N}$ be two sequences of positive numbers. Then there exists some constant $C=C_s>0$ such that
\[
\sum_{m, k\in\N}\frac{A_mB^\frac{1}{s}_k}{(2^m+2^k)^\frac{1}{s}}\leq C_s\left(\max\left\{\sum_{m\in\N}A_m^{1+\frac{1}{s}}2^{-\frac{m}{s}}, \sum_{k\in\N}B_k^{1+\frac{1}{s}}2^{-\frac{k}{s}}\right\}+\sum_{m\in\N}A_mB_m^\frac{1}{s}2^{-\frac{m}{s}}\right).
\]
\end{lemma}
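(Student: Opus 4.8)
The plan is to split the double sum over $\{m \geq k\}$ and $\{m < k\}$, which by symmetry of the roles of $A$ and $B$ amounts to handling one region, say $m \geq k$. On that region $2^m + 2^k \asymp 2^m$, so the summand is comparable to $A_m B_k^{1/s} 2^{-m/s}$. Thus it suffices to bound $\sum_{m} A_m 2^{-m/s} \sum_{k \leq m} B_k^{1/s}$. The diagonal terms $k = m$ are exactly $\sum_m A_m B_m^{1/s} 2^{-m/s}$, the last term on the right-hand side, so I can peel those off and focus on the strictly-off-diagonal part $k < m$.

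For the off-diagonal part I would use Hölder's inequality in the inner sum: writing $\sum_{k < m} B_k^{1/s} = \sum_{k < m} B_k^{1/s} 2^{-k/(s(s+1))} 2^{k/(s(s+1))}$, apply Hölder with exponents $p = s+1$ and $q = (s+1)/s$. This gives
\[
\sum_{k < m} B_k^{1/s} \leq \left(\sum_{k < m} B_k^{(s+1)/s} 2^{-k/s}\right)^{1/(s+1)} \left(\sum_{k < m} 2^{k/s}\right)^{s/(s+1)} \leq C_s \left(\sum_{k \in \N} B_k^{1+1/s} 2^{-k/s}\right)^{1/(s+1)} 2^{m/(s+1)},
\]
where the geometric sum $\sum_{k<m} 2^{k/s} \leq C_s 2^{m/s}$ was used. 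Substituting back, the off-diagonal contribution is at most
\[
C_s \left(\sum_{k} B_k^{1+1/s} 2^{-k/s}\right)^{1/(s+1)} \sum_{m} A_m 2^{-m/s} 2^{m/(s+1)} = C_s \left(\sum_{k} B_k^{1+1/s} 2^{-k/s}\right)^{1/(s+1)} \sum_{m} A_m 2^{-m/(s(s+1))}.
\]
Now apply Hölder once more to the $m$-sum, again with exponents $s+1$ and $(s+1)/s$, splitting $A_m 2^{-m/(s(s+1))} = A_m 2^{-m/(s(s+1))}$ so that $A_m$ pairs with weight $2^{-m/s}$: $\sum_m A_m 2^{-m/(s(s+1))} \leq (\sum_m A_m^{1+1/s} 2^{-m/s})^{s/(s+1)} (\sum_m 2^{m/s \cdot \text{(something)}})^{1/(s+1)}$ — here I need to check the bookkeeping so that the residual geometric factor is summable, which it is precisely because of the exponent arithmetic $\frac{1}{s} - \frac{1}{s(s+1)} \cdot \frac{s+1}{s} \cdot (\dots)$; I expect this to close with a bare constant. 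The result is a bound by $C_s (\sum B_k^{1+1/s} 2^{-k/s})^{1/(s+1)} (\sum A_m^{1+1/s} 2^{-m/s})^{s/(s+1)}$, and by Young's inequality $x^{1/(s+1)} y^{s/(s+1)} \leq x + y$ this is $\leq C_s(\sum A_m^{1+1/s}2^{-m/s} + \sum B_k^{1+1/s}2^{-k/s}) \leq 2 C_s \max\{\cdots\}$. Adding the symmetric region $m < k$ and the peeled-off diagonal gives the claimed inequality.

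The main obstacle is purely computational: keeping the exponents straight across the two nested applications of Hölder's inequality so that every auxiliary geometric series that appears is genuinely convergent with an $s$-dependent constant, and verifying that the final products reassemble into $\max\{\sum A_m^{1+1/s}2^{-m/s}, \sum B_k^{1+1/s}2^{-k/s}\}$ rather than some other combination. There is no conceptual difficulty — no analysis beyond finite Hölder and summing geometric series — but one must be careful that the weight $2^{-m/s}$ is the \emph{same} weight that appears in the target quantity $\sum A_m^{1+1/s} 2^{-m/s}$, which forces the specific choice of how to distribute the powers of $2$ when splitting each summand before applying Hölder. An alternative, cleaner route that I might prefer in the write-up is to invoke Schur's test directly on the kernel $K(m,k) = (2^m+2^k)^{-1/s}$ with test weights $2^{\alpha m}$ for a suitable $\alpha$, which would produce the off-diagonal bound in one stroke; but since the statement mixes $A_m$ with $B_k^{1/s}$ (different homogeneities), the hands-on double-Hölder argument above is the most transparent and is what I would present.
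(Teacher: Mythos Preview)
There is a genuine gap in the second H\"older step: it does not close. First, a small arithmetic slip in your first H\"older: with exponents $p=s+1$ and $q=(s+1)/s$, the dual factor is $\left(\sum_{k<m}2^{k/s^2}\right)^{s/(s+1)}\asymp 2^{m/(s(s+1))}$, not $2^{m/(s+1)}$. After correcting this and substituting back, the off-diagonal piece becomes
\[
C_s\Bigl(\sum_k B_k^{1+1/s}2^{-k/s}\Bigr)^{1/(s+1)}\sum_{m}A_m\,2^{-m/(s+1)}.
\]
Now try to bound $\sum_m A_m 2^{-m/(s+1)}$ by $C\bigl(\sum_m A_m^{1+1/s}2^{-m/s}\bigr)^{s/(s+1)}$. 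Writing $A_m 2^{-m/(s+1)}=\bigl[A_m 2^{-m/(s+1)}\bigr]\cdot 1$ and applying H\"older with exponent $(s+1)/s$ gives exactly the desired first factor, but the residual factor is $\bigl(\sum_m 1\bigr)^{1/(s+1)}=\infty$. This is not a bookkeeping issue: the inequality $\sum_m A_m 2^{-m/(s+1)}\le C\bigl(\sum_m A_m^{1+1/s}2^{-m/s}\bigr)^{s/(s+1)}$ is \emph{false}. Take $A_m=2^{m/(s+1)}$ for $m\le M$ and $A_m=0$ otherwise; then the left side equals $M$ while the right side equals $M^{s/(s+1)}$. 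The first H\"older already throws away the coupling between $B_k$ and the range $k<m$, and that loss is exactly what makes the second step impossible.

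The paper's argument avoids this by \emph{not} decoupling. On the region $k>m$ it substitutes $k=m+j$, so the summand becomes $A_m\,2^{-m/s}\,B_{m+j}^{1/s}\,2^{-j/s}$, and then applies H\"older \emph{once}, in the variable $m$, to the product $A_m\,2^{-m/(s+1)}\cdot B_{m+j}^{1/s}\,2^{-(m+j)/(s(s+1))}$ with exponents $(s+1)/s$ and $s+1$. Both $A_m$ and $B_{m+j}$ stay inside the same H\"older step, which produces directly
\[
\Bigl(\sum_m A_m^{1+1/s}2^{-m/s}\Bigr)^{s/(s+1)}\Bigl(\sum_m B_{m+j}^{1+1/s}2^{-(m+j)/s}\Bigr)^{1/(s+1)},
\]
and the remaining sum in the shift $j$ carries the leftover weight $2^{-j/(s+1)}$, a genuinely convergent geometric series. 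The key idea you are missing is to make the shift variable, not the original $k$, the outer summation index, so that H\"older sees $A_m$ and $B_{m+j}$ simultaneously. Your Schur-test alternative runs into the same obstruction, since it too would decouple the two sequences.
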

\begin{proof}
First observe that
\[
\begin{split}
&\sum_{m, k\in\N}\frac{A_mB^\frac{1}{s}_k}{(2^m+2^k)^\frac{1}{s}}\\
\lesssim&\sum_{k> m}\frac{A_mB^\frac{1}{s}_k}{(2^m+2^k)^\frac{1}{s}}+\sum_{ k<m}\frac{A_mB^\frac{1}{s}_k}{(2^m+2^k)^\frac{1}{s}}+\sum_{m\in\N}A_mB_m^\frac{1}{s}2^{-\frac{m}{s}}.
\end{split}
\]
Let's first estimate the sum in $k>m$:
\[
\begin{split}
&\sum_{k> m}\frac{A_mB^\frac{1}{s}_k}{(2^m+2^k)^\frac{1}{s}}\leq C_s\sum_{m=1}^\infty A_m2^{-\frac{m}{s}}\sum_{k=1}^\infty B_{m+k}^\frac{1}{s}2^{-\frac{k}{s}}\\
=C_s&\sum_{k=1}^\infty2^{-\frac{k}{s+1}}\sum_{m=1}^\infty A_m2^{-\frac{m}{s+1}}B_{m+k}^\frac{1}{s}2^{-\frac{m+k}{s(s+1)}}\\
\leq C_s&\sum_{k=1}^\infty2^{-\frac{k}{s+1}}\left(\sum_{m=1}^\infty A_m^{1+\frac{1}{s}}2^{-\frac{m}{s}}\right)^\frac{s}{s+1}\left(\sum_{m=1}^\infty B_{m+k}^{1+\frac{1}{s}}2^{-\frac{m+k}{s}}\right)^{\frac{1}{s+1}}\\
\leq&C_s~\max\left\{\sum_{m\in\N}A_m^{1+\frac{1}{s}}2^{-\frac{m}{s}}, \sum_{k\in\N}B_k^{1+\frac{1}{s}}2^{-\frac{k}{s}}\right\},
\end{split}
\]
thanks to Holder's inequality with dual exponents $1+1/s$ and $s+1$. The sum in $m>k$ is estimated analogously. This concludes the proof.
\end{proof}
Our take away from Lemma \ref{lemma:strong:ineq} is the following
\begin{coro}
\label{coro:strong}
Let $s\geq1$, $d\geq1$ and $(N_m)_{m\in\N^d}$ be a sequence of positive numbers so that
\begin{equation}
\label{eqn:hypsumd}
\sum_{m\in\N^d}N_m^{1+\frac{1}{s}}2^{-\frac{|m|}{s}}<\infty.
\end{equation}
 Then
\begin{equation}
\label{eqn:ineq:double}
\sum_{m\in\N^d}N_m\sum_{k\in\N^d}N_k^\frac{1}{s}\left(\prod_{i=1}^d\frac{1}{2^{m_i}+2^{k_i}}\right)^\frac{1}{s}<\infty.
\end{equation}
\end{coro}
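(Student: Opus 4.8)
The plan is to deduce the corollary from Lemma~\ref{lemma:strong:ineq} by an induction on the dimension $d$, after first upgrading that lemma to a \emph{bilinear} estimate. Writing $X(A):=\sum_{m\in\N}A_m^{1+\frac1s}2^{-m/s}$, the proof of Lemma~\ref{lemma:strong:ineq} in fact establishes, for all positive sequences $(A_m)_{m\in\N}$ and $(B_m)_{m\in\N}$, that
\[
\sum_{m,k\in\N}\frac{A_mB_k^{1/s}}{(2^m+2^k)^{1/s}}\le C_s\,X(A)^{\frac{s}{s+1}}X(B)^{\frac1{s+1}};
\]
indeed the estimates there for the sums over $k>m$ and over $m>k$ each terminate in precisely $C_s\,X(A)^{s/(s+1)}X(B)^{1/(s+1)}$ --- one simply stops at this product rather than weakening it to $\max\{X(A),X(B)\}$ --- while the diagonal term equals $\sum_m\bigl(A_m^{1+1/s}2^{-m/s}\bigr)^{\frac{s}{s+1}}\bigl(B_m^{1+1/s}2^{-m/s}\bigr)^{\frac1{s+1}}$, which is $\le X(A)^{s/(s+1)}X(B)^{1/(s+1)}$ by one more Hölder inequality. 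This is the $d=1$ case of the following assertion, call it $(\star_d)$: for every $d\ge1$ and all positive $(A_m)_{m\in\N^d}$, $(B_k)_{k\in\N^d}$,
\[
\sum_{m,k\in\N^d}A_mB_k^{1/s}\prod_{i=1}^d\frac{1}{(2^{m_i}+2^{k_i})^{1/s}}\le C_s^{d}\,X_d(A)^{\frac{s}{s+1}}X_d(B)^{\frac1{s+1}},\qquad X_d(A):=\sum_{m\in\N^d}A_m^{1+\frac1s}2^{-|m|/s}.
\]
Once $(\star_d)$ is proved, the corollary is immediate on taking $A=B=N$: the left-hand side is exactly the double sum in \eqref{eqn:ineq:double}, and the right-hand side equals $C_s^{d}\,X_d(N)$, finite by the hypothesis \eqref{eqn:hypsumd}.

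To prove $(\star_d)$ I would induct on $d$, the base case being the bilinear estimate above. For the inductive step, write $m=(m',m_d)$, $k=(k',k_d)$ with $m',k'\in\N^{d-1}$ and peel off the last coordinate,
\[
\sum_{m,k\in\N^d}A_mB_k^{1/s}\prod_{i=1}^d\frac1{(2^{m_i}+2^{k_i})^{1/s}}=\sum_{m_d,k_d\in\N}\frac{1}{(2^{m_d}+2^{k_d})^{1/s}}\sum_{m',k'\in\N^{d-1}}A_{(m',m_d)}B_{(k',k_d)}^{1/s}\prod_{i=1}^{d-1}\frac1{(2^{m_i}+2^{k_i})^{1/s}}.
\]
Applying $(\star_{d-1})$ to the inner sum, for the $\N^{d-1}$-sequences $m'\mapsto A_{(m',m_d)}$ and $k'\mapsto B_{(k',k_d)}$, bounds it by $C_s^{d-1}\,\widetilde A_{m_d}\,\widetilde B_{k_d}^{1/s}$, where $\widetilde A_{m_d}:=\bigl(\sum_{m'}A_{(m',m_d)}^{1+1/s}2^{-|m'|/s}\bigr)^{s/(s+1)}$ and $\widetilde B_{k_d}$ is defined analogously. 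I would then apply the $d=1$ bilinear estimate to $\sum_{m_d,k_d}\widetilde A_{m_d}\widetilde B_{k_d}^{1/s}(2^{m_d}+2^{k_d})^{-1/s}$; since $\widetilde A_{m_d}^{1+1/s}=\sum_{m'}A_{(m',m_d)}^{1+1/s}2^{-|m'|/s}$, one has $\sum_{m_d}\widetilde A_{m_d}^{1+1/s}2^{-m_d/s}=X_d(A)$, and likewise for $\widetilde B$, which gives $(\star_d)$ with constant $C_s^{d-1}\cdot C_s=C_s^{d}$.

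The point that needs attention is the very first one: Lemma~\ref{lemma:strong:ineq} as stated, with the $\max$ on the right, does not iterate, because a bound by a \emph{sum} $\max\{X(A),X(B)\}\lesssim X(A)+X(B)$ cannot be substituted back into itself across coordinates. It is the \emph{product} (geometric-mean) form that forces the collapsed sequences $\widetilde A,\widetilde B$ to carry exactly the right homogeneity --- they live in $\ell^{1+1/s}$ with weight $2^{-|\cdot|/s}$ at every stage --- so that the exponent $1+\tfrac1s$ and the weight $2^{-|m|/s}$ reproduce themselves; with that recorded, the induction is a bookkeeping of exponents.

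Alternatively, one can sidestep the lemma and argue directly. Setting $a_m:=N_m^{1+1/s}2^{-|m|/s}$, so that $N_m=a_m^{s/(s+1)}2^{|m|/(s+1)}$, one has the pointwise identity
\[
N_mN_k^{1/s}\prod_{i=1}^d(2^{m_i}+2^{k_i})^{-1/s}=a_m^{\frac{s}{s+1}}a_k^{\frac1{s+1}}\prod_{i=1}^d f_i(m_i,k_i),\qquad f_i(m_i,k_i):=\frac{2^{m_i/(s+1)+k_i/(s(s+1))}}{(2^{m_i}+2^{k_i})^{1/s}},
\]
together with the elementary bound $f_i(m_i,k_i)\le 2^{-|m_i-k_i|/(s(s+1))}$, checked by distinguishing $m_i\ge k_i$ from $m_i<k_i$. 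The weighted arithmetic--geometric mean inequality $a_m^{s/(s+1)}a_k^{1/(s+1)}\le\tfrac{s}{s+1}a_m+\tfrac1{s+1}a_k$ and the uniform bound $\sup_{m}\sum_{k\in\N^d}2^{-\|m-k\|_1/(s(s+1))}<\infty$ (and its transpose) then bound the double sum in \eqref{eqn:ineq:double} by a constant times $\sum_m a_m=X_d(N)$, which is finite by \eqref{eqn:hypsumd}.
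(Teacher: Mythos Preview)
Your proof is correct. The overall architecture---induction on $d$ via the one-dimensional Lemma~\ref{lemma:strong:ineq}---matches the paper's, but you make a genuine refinement at the outset: where the paper applies Lemma~\ref{lemma:strong:ineq} \emph{as stated} (with the $\max$ on the right), you first sharpen it to the bilinear geometric-mean bound $X(A)^{s/(s+1)}X(B)^{1/(s+1)}$. This is the key difference. In the paper's induction, applying the $\max$ form in the first coordinate spawns three terms $I_1,I_2,I_3$; the ``max'' pieces $I_1,I_2$ then have to be handled separately (the paper asserts they converge ``thanks to \eqref{eqn:hypsumd}'', which still leaves a free sum over $\tilde k$ and is not entirely immediate), while only the diagonal piece $I_3$ feeds back into the inductive hypothesis. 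Your product form sidesteps all of this: as you correctly note, the exponents are tuned so that the collapsed one-variable sequences $\widetilde A_{m_d},\widetilde B_{k_d}$ land exactly in $\ell^{1+1/s}(2^{-\cdot/s})$ with $X_1(\widetilde A)=X_d(A)$, and the induction closes with no residual terms. Your alternative direct argument---rewriting the summand as $a_m^{s/(s+1)}a_k^{1/(s+1)}\prod_i f_i$ with $f_i(m_i,k_i)\le 2^{-|m_i-k_i|/(s(s+1))}$, then finishing via weighted AM--GM and a Schur-type row/column sum---is a genuinely different and more elementary route that bypasses both the lemma and the induction entirely.
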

\begin{proof}
The proof is by induction on $d$:
\begin{description}
\item[$d=1$] apply Lemma \ref{lemma:strong:ineq} to $A_m=B_m=N_m$;
\item[$d\geq2$] suppose that \eqref{eqn:ineq:double} is true for $d-1$, and let $(N_m)_{n\in\N^d}$ be a sequence of positive numbers. Then, by applying Lemma \ref{lemma:strong:ineq},
\[
\begin{split}
&\sum_{m\in\N^d}N_m\sum_{k\in\N^d}N_k^\frac{1}{s}\left(\prod_{i=1}^d\frac{1}{2^{m_i}+2^{k_i}}\right)^\frac{1}{s}\\
=&\sum_{\tilde{m}, \tilde{k} \in\N^{d-1}}\prod_{i=1}^{d-1}\left(\frac{1}{2^{\tilde{k}_i}+2^{\tilde{m}_i}}\right)^\frac{1}{s}\sum_{m_1, k_1\in\N}\frac{N_{(m_1, \tilde{m})}N_{(k_1, \tilde{k})}^\frac{1}{s}}{(2^{m_1}+2^{k_1})^\frac{1}{s}}\\
\underset{\sim}{<}&\sum_{\tilde{m}, \tilde{k} \in\N^{d-1}}\prod_{i=1}^{d-1}\left(\frac{1}{2^{\tilde{k}_i}+2^{\tilde{m}_i}}\right)^\frac{1}{s}\max\left\{\sum_{m_1\in\N}N_{(m_1, \tilde{m})}^{1+\frac{1}{s}}2^{-\frac{m_1}{s}}\,,\,\sum_{m_1\in\N}N_{(m_1, \tilde{k})}^{1+\frac{1}{s}}2^{-\frac{m_1}{s}}\right\}+\\
+&\sum_{\tilde{m}, \tilde{k}\in\N^{d-1}}\prod_{i=1}^{d-1}\left(\frac{1}{2^{\tilde{m}_i}+2^{\tilde{k}_i}}\right)^\frac{1}{s}\sum_{m_1\in\N}N_{(m_1, \tilde{m})}N_{(m_1, \tilde{k})}^\frac{1}{s}2^{-\frac{m_1}{s}}\\
\leq&\sum_{\tilde{m}, \tilde{k} \in\N^{d-1}}\prod_{i=1}^{d-1}\left(\frac{1}{2^{\tilde{k}_i}+2^{\tilde{m}_i}}\right)^\frac{1}{s}\sum_{m_1\in\N}N_{(m_1, \tilde{m})}^{1+\frac{1}{s}}2^{-\frac{m_1}{s}}+\\
+&\sum_{\tilde{m}, \tilde{k} \in\N^{d-1}}\prod_{i=1}^{d-1}\left(\frac{1}{2^{\tilde{k}_i}+2^{\tilde{m}_i}}\right)^\frac{1}{s}\sum_{m_1\in\N}N_{(m_1, \tilde{k})}^{1+\frac{1}{s}}2^{-\frac{m_1}{s}}+\\
+&\sum_{\tilde{m}, \tilde{k}\in\N^{d-1}}\prod_{i=1}^{d-1}\left(\frac{1}{2^{\tilde{m}_i}+2^{\tilde{k}_i}}\right)^\frac{1}{s}\sum_{m_1\in\N}N_{(m_1, \tilde{m})}N_{(m_1, \tilde{k})}^\frac{1}{s}2^{-\frac{m_1}{s}}\\
=:&I_1+I_2+I_3,
\end{split}
\]
where the index $m$ in $\N^d$ is written as $(m_1, \tilde{m})$, with $m_1$ in $\N$ and $\tilde{m}$ in $\N^{d-1}$. Observe that, thanks to \eqref{eqn:hypsumd}, $I_1$ and $I_2$ converge. As for $I_3$, we can change the order of summation and apply the case $d-1$. Which yields
\[
\begin{split}
&\sum_{m\in\N^d}N_m\sum_{k\in\N^d}N_k^\frac{1}{s}\left(\prod_{i=1}^d\frac{1}{2^{m_i}+2^{k_i}}\right)^\frac{1}{s}\\
\underset{\sim}{<}&\sum_{m_1\in\N}2^{-\frac{m_1}{s}}\sum_{\tilde{m}, \tilde{k}\in\N^{d-1}}N_{(m_1, \tilde{m})}N_{(m_1, \tilde{k})}^\frac{1}{s}\prod_{i=1}^{d-1}\left(\frac{1}{2^{\tilde{m}_i}+2^{\tilde{k}_i}}\right)^\frac{1}{s}\\
\underset{\sim}{<}&\sum_{m_1\in\N}2^{-\frac{m_1}{s}}\sum_{\tilde{m}\in\N^{d-1}}N_{(m_1, \tilde{m})}^{1+\frac{1}{s}}2^{-\frac{|\tilde{m}|}{s}}\\
=&\sum_{m\in\N^d}N_m^{1+\frac{1}{s}}2^{-\frac{|m|}{s}}<\infty.
\end{split}
\]
\end{description}
\end{proof}

\subsection{Random tools}
Fairly elementary facts from probability theory are exploited in the proofs. All the events and the random variables that are considered will be defined on the same probability space $(\Omega, \A, \p)$. For a comprehensive treatment of the probabilistic results used, see \cite{Billingsley}.

The first tool is the Borel-Cantelli Lemma.  Recall that, given a sequence $(A_n)_{n\in\N}$ of events in $\A$, then
\[
\limsup_{n\in\N}A_n:=\bigcap_{k\in\N}\bigcup_{n\geq k }A_n
\]
denotes the event made of those $\omega$ in $\Omega$ that belong to infinitely many of the events in $(A_n)_{n\in\N}$.
\begin{theo}[Borel-Cantelli Lemma]
\label{theo:bc}
Let $(A_n)_{n\in\N}$ be a sequence of events in $\A$. Then
\begin{itemize}
\item[(i)] If $\displaystyle{\sum_{n\in\N}}\p(A_n)<\infty$, then $\displaystyle\p\left(\limsup_{n\in\N}A_n\right)=0$;
\item[(ii)] If $\displaystyle{\sum_{n\in\N}}\p(A_n)=\infty$ and the events in $(A_n)_{n\in\N}$ are independent, then $\displaystyle\p\left(\limsup_{n\in\N}A_n\right)=1$.
\end{itemize}
\end{theo}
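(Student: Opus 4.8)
The plan is to treat the two parts separately: the first is a direct consequence of countable subadditivity and requires no independence, whereas the second rests crucially on independence together with an elementary exponential estimate. Throughout I would work directly from the definition $\limsup_{n\in\N}A_n=\bigcap_{k\in\N}\bigcup_{n\geq k}A_n$ recalled just before the statement.

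For part (i), I would observe that this definition gives the inclusion $\limsup_{n\in\N}A_n\subseteq\bigcup_{n\geq k}A_n$ for every fixed $k$. Monotonicity of $\p$ combined with countable subadditivity then yields $\p(\limsup_{n\in\N}A_n)\leq\sum_{n\geq k}\p(A_n)$. Since the series $\sum_{n\in\N}\p(A_n)$ converges by hypothesis, its tail tends to $0$ as $k\to\infty$; letting $k\to\infty$ forces $\p(\limsup_{n\in\N}A_n)=0$.

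For part (ii), I would pass to complements via De Morgan's laws, writing $(\limsup_{n\in\N}A_n)^c=\bigcup_{k\in\N}\bigcap_{n\geq k}A_n^c$. As a countable union of events, this has probability zero provided each $\bigcap_{n\geq k}A_n^c$ is null, so it suffices to fix $k$ and show $\p(\bigcap_{n\geq k}A_n^c)=0$. Here independence enters: the complements $A_n^c$ are independent along with the $A_n$, so for every finite $M\geq k$ one has the factorization $\p(\bigcap_{n=k}^M A_n^c)=\prod_{n=k}^M(1-\p(A_n))$. Using the elementary inequality $1-x\leq e^{-x}$ valid for all real $x$, this product is bounded above by $\exp\bigl(-\sum_{n=k}^M\p(A_n)\bigr)$. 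Because $\sum_{n\in\N}\p(A_n)=\infty$, the partial sums $\sum_{n=k}^M\p(A_n)$ diverge as $M\to\infty$, so this upper bound tends to $0$. Applying continuity of $\p$ from above to the decreasing sequence of events $\bigcap_{n=k}^M A_n^c$ gives $\p(\bigcap_{n\geq k}A_n^c)=\lim_{M\to\infty}\prod_{n=k}^M(1-\p(A_n))=0$, and hence $\p((\limsup_{n\in\N}A_n)^c)=0$, which is the claim.

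The main obstacle — modest, since this is a classical lemma — is the correct handling of the infinite intersection in part (ii): independence is only directly applicable to finitely many events at once, so one must first approximate $\bigcap_{n\geq k}A_n^c$ by its finite truncations, control each truncation through the product formula and the exponential bound, and only then pass to the limit via continuity of the measure. One degenerate case deserves a remark: if some $\p(A_n)=1$, the corresponding factor $1-\p(A_n)$ vanishes and the finite products are identically zero, so the conclusion holds trivially; the argument above absorbs this case without modification.
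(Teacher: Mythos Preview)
Your proof is correct and follows the standard textbook argument. The paper, however, does not prove this statement at all: it is stated as a classical tool with a reference to Billingsley's \emph{Probability and Measure}, so there is no ``paper's own proof'' to compare against. Your argument is precisely the one found in such references, so nothing further is needed.
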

 Given a random variable $X$ on $\Omega$, its mean value (or expectation) will be denoted by
\[
\E(X):=\int_{\Omega}X\,d\p.
\]
In particular, if $\E(X)<\infty$, then $\p\{X=\infty\}=0$. \\
Another classic tool from probability that will be used is Jensen's Inequality:
\begin{theo}[Jensen's Inequality]
Let $X$ be a real-valued random variable on $\Omega$, and let $\phi\colon\R\to\R$ be a convex function. Then
\[
\E(\phi(X))\geq\phi(\E(X)).
\]
\end{theo}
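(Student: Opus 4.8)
The plan is to reduce Jensen's inequality to the elementary geometric fact that a convex function lies above each of its supporting lines, and then to take expectations of the resulting pointwise estimate.

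First I would record the \emph{supporting-line property}: since $\phi\colon\R\to\R$ is convex, at every point $x_0\in\R$ the one-sided derivatives exist and are finite, and any slope $a$ lying between the left and right derivatives at $x_0$ determines a line through $(x_0,\phi(x_0))$ that stays weakly below the graph, i.e.
\[
\phi(x)\geq\phi(x_0)+a(x-x_0)\qquad\text{for all }x\in\R.
\]
I would prove this from the standard monotonicity of difference quotients of a convex function: for $u<x_0<v$ one has
\[
\frac{\phi(x_0)-\phi(u)}{x_0-u}\leq\frac{\phi(v)-\phi(x_0)}{v-x_0},
\]
so the left derivative at $x_0$ does not exceed the right derivative, and any intermediate slope $a$ yields the displayed bound.

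Next I would set $x_0:=\E(X)$, which is finite by hypothesis, choose a corresponding subgradient $a$, and substitute the random variable $X$ into the supporting-line inequality to obtain the pointwise estimate
\[
\phi(X(\omega))\geq\phi(\E(X))+a\bigl(X(\omega)-\E(X)\bigr)\qquad\text{for all }\omega\in\Omega.
\]
Taking expectations of both sides and using linearity together with $\E\bigl(X-\E(X)\bigr)=0$ then gives
\[
\E(\phi(X))\geq\phi(\E(X))+a\bigl(\E(X)-\E(X)\bigr)=\phi(\E(X)),
\]
which is exactly the assertion.

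The main obstacle is integrability and measurability bookkeeping rather than the geometry. I must check that $\phi(X)$ is measurable (automatic, since a convex function is continuous, hence Borel) and that $\E(\phi(X))$ is well defined in $(-\infty,+\infty]$. The supporting-line bound furnishes $\phi(X)\geq\phi(\E(X))+a\bigl(X-\E(X)\bigr)$, whose right-hand side is integrable; thus the negative part of $\phi(X)$ is integrable and $\E(\phi(X))$ is a well-defined element of $(-\infty,+\infty]$, and the final inequality holds even in the degenerate case $\E(\phi(X))=+\infty$. This is the one point requiring care, and it is resolved precisely by the supporting line already constructed.
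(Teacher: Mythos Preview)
Your argument is the standard and correct supporting-line proof of Jensen's inequality, with appropriate care taken regarding measurability and the well-definedness of $\E(\phi(X))$ in $(-\infty,+\infty]$. There is nothing to object to.

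Note, however, that the paper does \emph{not} supply its own proof of this statement: Jensen's inequality is merely recorded as a classical tool from probability (with an implicit reference to \cite{Billingsley}) and then immediately specialized to the concave map $t\mapsto t^{1/s}$ to obtain \eqref{eqn:jensen}. So there is no ``paper's own proof'' to compare against; you have simply filled in a proof that the authors chose to omit.
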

In particular, since
\[
t\in(0, \infty)\mapsto t^{\frac{1}{s}}
\]
is concave, for any $s\geq1$, this gives
\begin{equation}
\label{eqn:jensen}
\E\left(X^\frac{1}{s}\right)\leq\E(X)^\frac{1}{s},
\end{equation}
for any positive random variable $X$ on $\Omega$, by applying Jensen's inequality to $\phi(t)=-t^\frac{1}{s}$. \\
We can now prove Lemma \ref{lemma:finitesumas}, a tool for the proofs of Theorem \ref{theo:polydisc}:
\begin{lemma}
\label{lemma:finitesumas}
Let $(X^i_{n, j})_{n, j\in\N}$ be a sequence of positive random variables, for any $i=1,\dots,d$. Set
\[
m(n, j):=\min_{i=1,\dots,d}X^i_{n, j},\qquad p(n, j)=\prod_{i=1}^dX^i_{n, j}.
\]
Assume that
\[
\sum_{j\in\N}\left(\sum_{k\in\N}\E(p(k, j))\right)^\frac{1}{d}<\infty.
\]
Then
\[
\sup_{n\in\N}\sum_{j\ne n} m(n, j)
\]
is bounded almost surely.
\end{lemma}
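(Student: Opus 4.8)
The plan is to show that the random variable $S:=\sup_{n\in\N}\sum_{j\ne n}m(n,j)$ has finite expectation, or rather — since the supremum over $n$ does not interact well with the expectation — to bound each $\sum_{j\ne n}m(n,j)$ by a single random variable whose expectation is controlled by the hypothesis, and then conclude via the remark that a positive random variable with finite mean is almost surely finite. The key pointwise inequality is $m(n,j)=\min_i X^i_{n,j}\leq p(n,j)^{1/d}$, because the minimum of $d$ positive numbers is at most their geometric mean. Hence
\[
\sum_{j\ne n}m(n,j)\leq\sum_{j\in\N}p(n,j)^{1/d}\leq\sum_{n\in\N}\sum_{j\in\N}p(n,j)^{1/d}=:Y,
\]
and crucially the right-hand side $Y$ is independent of $n$, so $S\leq Y$ almost surely. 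It therefore suffices to show $\E(Y)<\infty$.

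To estimate $\E(Y)=\sum_{n,j}\E\bigl(p(n,j)^{1/d}\bigr)$, I would first apply Jensen's inequality in the form \eqref{eqn:jensen} with $s=d$ to pass the exponent $1/d$ inside the expectation: $\E\bigl(p(n,j)^{1/d}\bigr)\leq\E(p(n,j))^{1/d}$. This reduces the problem to showing $\sum_{n,j}\E(p(n,j))^{1/d}<\infty$. Now sum first over $n$: by the elementary subadditivity $\bigl(\sum_k a_k\bigr)^{1/d}\leq\sum_k a_k^{1/d}$ for positive $a_k$ (valid since $1/d\leq 1$), we have
\[
\sum_{n\in\N}\E(p(n,j))^{1/d}\leq\Bigl(\sum_{n\in\N}\E(p(n,j))\Bigr)^{1/d}\quad\text{is the wrong direction};
\]
so instead I would go the other way and note that the hypothesis is already phrased with the sum over $k$ \emph{inside} the $1/d$-power, namely $\sum_{j}\bigl(\sum_k\E(p(k,j))\bigr)^{1/d}<\infty$, and that $\sum_n\E(p(n,j))^{1/d}\leq\bigl(\sum_n\E(p(n,j))\bigr)^{1/d}$ fails — rather one uses that for each fixed $j$, $\sum_{n}\E(p(n,j))^{1/d}$ need not be finite, which signals that the bound $S\leq Y$ is too crude and one must instead keep the supremum structure.

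The better route, and the one I would actually carry out: fix $n$ and bound $\sum_{j\ne n}m(n,j)\leq\sum_j p(n,j)^{1/d}$, then take expectations only after the supremum is dealt with by a union-type argument — but cleanest is to observe that we do not need $\E(S)<\infty$; it is enough that for \emph{every} $n$ the sum $\sum_{j\ne n}m(n,j)$ is a.s.\ finite \emph{and} that $\sum_n\sum_j p(n,j)^{1/d}<\infty$ a.s., the latter following because its expectation is $\leq\sum_{n,j}\E(p(n,j))^{1/d}\leq\sum_j\sum_n\E(p(n,j))^{1/d}$, and by Jensen applied per $j$ together with $1/d$-subadditivity one gets $\sum_n\E(p(n,j))^{1/d}\leq\bigl(\sum_n\E(p(n,j))\bigr)^{1/d}$ — wait, this inequality is $\sum_n a_n^{1/d}\ge(\sum_n a_n)^{1/d}$, so it is again the wrong way. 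The resolution: the hypothesis must be used with Hölder, not naive subadditivity. I expect the main obstacle to be precisely this: interchanging the sum over $n$ with the $1/d$-th power. The fix is to write $p(n,j)^{1/d}=\prod_i (X^i_{n,j})^{1/d}$ and apply Hölder's inequality with $d$ exponents all equal to $d$ to the inner sum over one of the variables, exactly as in the proof of Corollary \ref{coro:strong} / Lemma \ref{lemma:strong:ineq}, reducing $\sum_n\E(p(n,j))^{1/d}$ to $\bigl(\sum_n\E(p(n,j))\bigr)^{1/d}$ up to a constant; summing over $j$ then invokes the hypothesis directly. Once $\E\bigl(\sum_n\sum_j p(n,j)^{1/d}\bigr)<\infty$, that double sum is a.s.\ finite, hence so is $\sup_n\sum_{j\ne n}m(n,j)$, which is what we want.
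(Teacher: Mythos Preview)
Your proposal contains a genuine gap. You correctly identify the pointwise bound $m(n,j)\le p(n,j)^{1/d}$, and you correctly diagnose that the inequality $\sum_n a_n^{1/d}\le(\sum_n a_n)^{1/d}$ goes the wrong way. But your proposed repair in the last paragraph --- using H\"older on $\prod_i (X^i_{n,j})^{1/d}$ to ``reduce $\sum_n\E(p(n,j))^{1/d}$ to $(\sum_n\E(p(n,j)))^{1/d}$ up to a constant'' --- does not work: no version of H\"older yields $\sum_n a_n^{1/d}\lesssim(\sum_n a_n)^{1/d}$, and indeed this fails already for $a_1=\dots=a_N=1$. The H\"older argument in Lemma~\ref{lemma:strong:ineq} is doing something entirely different (it bounds a bilinear sum by a product of two $\ell^p$-norms after a geometric splitting) and does not supply the inequality you need here.

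The missing idea is much simpler and avoids the problem altogether. Instead of summing over $n$ and $j$ and then trying to collapse the $n$-sum, observe that for each fixed $j$ and any $n\ne j$ one has the \emph{pointwise} bound
\[
p(n,j)^{1/d}\;\le\;\Bigl(\sum_{k\ne j}p(k,j)\Bigr)^{1/d},
\]
since a single nonnegative term is dominated by the full sum. The right-hand side no longer depends on $n$, so
\[
\sup_{n}\sum_{j\ne n}m(n,j)\;\le\;\sum_{j}\Bigl(\sum_{k\ne j}p(k,j)\Bigr)^{1/d}.
\]
Now take expectations and apply Jensen \eqref{eqn:jensen} \emph{once} to each $j$-term:
\[
\E\Bigl(\sup_{n}\sum_{j\ne n}m(n,j)\Bigr)\;\le\;\sum_{j}\E\Bigl(\Bigl(\sum_{k}p(k,j)\Bigr)^{1/d}\Bigr)\;\le\;\sum_{j}\Bigl(\sum_{k}\E(p(k,j))\Bigr)^{1/d},
\]
which is exactly the hypothesis. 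The point is to eliminate the $n$-dependence \emph{before} taking the supremum, not to sum over $n$ afterwards.
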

\begin{proof}
Since, for any $n\ne j$  in $\N$, $$m(n, j)\leq p(n, j)^\frac{1}{d}\leq\left(\sum_{k\ne j}p(k, j)\right)^\frac{1}{d},$$ we have
\[
\sup_{n\in\N}\sum_{j\ne n}m(n, j)\leq\sum_{j\in\N}\left(\sum_{k\ne j}p(k, j)\right)^\frac{1}{d}.
\]
Thus
\[
\E\left(\sup_{n\in\N}\sum_{j\ne n}m(n, j)\right)\leq\sum_{j\in\N}\E\left(\left(\sum_{k\in\N}p(k, j)\right)^\frac{1}{d}\right)\leq\sum_{j\in\N}\left(\sum_{k\in\N}\E(p(k, j))\right)^\frac{1}{d}.
\]
\end{proof}
\section{Random Sequences in the Polydisc}
\label{sec:polydisc}
This section is devoted to the proof of Theorem \ref{theo:polydisc} and Theorem \ref{theo:intpolyh2}. The events $\mathcal{U}(\D^d)$, $\mathcal{W}(\D^d)$, $\mathcal{C}(\h^2(\D^d))$ and $\tilde{\mathcal{I}}(\D^d)$ will be analyzed separately.

\subsection{Weak Separation}
For weak separation in the polydisc, it turns out that Cochran's argument in \cite[Th. 2]{coch} extends to the higher dimensional case:
\begin{proof}[Proof of Theorem \ref{theo:polydisc}, (i)]
For the sake of readability, we will adapt Cochran's proof only to the case $d=2$: the proof will lift appropriately to any $d>1$.
Assume first that $\sum_{m\in\N^2}N_m^22^{-|m|}=\infty$ and let $l$ be in $\N$. Define
$$
A_l:=\bigcup_{r\ne n}\{\rho_G(\lambda_r, \lambda_n)\leq5\cdot2^{-l} \}
$$
as the set of those $\omega$ in $\Omega$ such that there exists a pair of distinct indices $n$ and $r$ so that the Gleason distance between $\lambda_n(\omega)$ and $\lambda_r(\omega)$ is controlled by, roughly, $2^{-l}$.
Since $\mathcal{W}(\D^d)^c\subseteq\bigcap_{l\in\N}A_l$, it suffices to show that $\p(A_l)=1$ for any $l$ in $\N$.

For any $m$ in $\N^2$, partition $I_{m}$ into $2^{2l}$ "rectangles" of the form
\[
\left\{(z^1,z^2)\in\D^2\,|\, \frac{1}{2^{m_i-1}}+\frac{r_i}{2^{m_i+l}}\leq1-|z^i|< \frac{1}{2^{m_i-1}}+\frac{r_i}{2^{m_i+l}}\right\},\qquad r_i=1, \dots, 2^l
\]
and observe that at least one of these rectangles, say $R_{m}$, must contain at least  $M_m:=N_m/2^{2l}$ points of $\Lambda$. Let
\[
B_{m}:=\bigcup_{r\ne n}\left\{\lambda_r\in R_m, \lambda_n\in R_m, |\theta^1_r-\theta^1_n|\leq\pi\cdot2^{-(m_1+l)}, |\theta^2_n-\theta^2_r|\leq\pi\cdot2^{-(m_2+l)}\right\}.
\]
Since
\[
\limsup_{m}B_{m}\subseteq A_l
\]
and the events $B_{m}$ are independent, by the Borel-Cantelli Lemma, Theorem \ref{theo:bc}, it suffices to show that $\sum_{m\in\N^2}\p(B_{m})=\infty$.

In order to estimate the probability of each $B_{m}$ from below, we give an upper bound for $\p(B_{m}^c)$. If $\tau$ is in $\T^2$, let $S_{m}(\tau)$ be a "rectangle" in $\T^2$ centered at $\tau$ with basis $2^{-(m_1+l)}$ and height $2^{-(m_2+l)}$. If $\tau_n=(e^{i\theta^1_n}, e^{i\theta^2_n})$, then thanks to the independence of $(\tau_n)_{n\in\N}$ we have
\[
\begin{split}
\p(B_{m}^c)&\leq\p\left(\left\{\tau_1\in\T^2, \tau_2\in\T^2\setminus S_{m}(\tau_1), \dots, \tau_{M_{m}}\in S\setminus\bigcup_{j=1}^{M_{m}-1}S_{m}(\tau_j)\right\}\right)\\
&\leq\left(1-2^{-(|m|+2l)}\right)\left(1-\frac{3}{2}\cdot2^{-(|m|+2l)}\right)\dots\left(1-\frac{M_{m}}{2}\cdot2^{-(|m|+2l)}\right)\\
&=\prod_{j=2}^{M_{m}}\left(1-j\cdot2^{-(|m|+2l+1)}\right).
\end{split}
\]
If $\liminf_{m}\p(B_m^c)<1$,  then $\p(B_{m})$ is uniformly bounded away from $0$ infinitely many times, and $\sum_{m\in\N^2}\p(B_{m})=\infty$ trivially.

On the other hand, if $\lim_{|m|\to\infty}\p(B^c_{m})=1$, then
\[
\begin{split}
\p(B_{m})&\geq1-\prod_{j=2}^{M_{m}}\left(1-j\cdot2^{-(|m|+2l+1)}\right)\\
&\underset{|m|\to\infty}{\sim}-\log\prod_{j=2}^{M_{m}}\left(1-j\cdot2^{-(|m|+2l+1)}\right)\\
&=-\sum_{j=2}^{M_{m}}\log\left(1-j\cdot2^{-(|m|+2l+1)}\right)\\
&\geq\sum_{j=2}^{M_{m}}j\cdot2^{-(|m|+2l+1)}\\
&\underset{|m|\to\infty}{\sim}\frac{M_m^22^{-|m|}}{2^{2l+2}}\geq\frac{N_{m}^22^{-|m|}}{2^{6l+2}},
\end{split}
\]
which is the general term of a divergent series\\
 To conclude the proof of Theorem \ref{theo:polydisc}, part (i), it suffices to show that a random sequence $\Lambda$ in $\D^d$ is almost surely weakly separated whenever \eqref{eqn:sum:polydisc} holds. To do so, let
\[
\Omega_m:=\bigcup_{r\ne n}\left\{\lambda_r\in I_m, \lambda_n\in I_m, |\theta^1_r-\theta^1_n|\leq\pi\cdot2^{-m_1}, |\theta^2_n-\theta^2_r|\leq\pi\cdot2^{-m_2}\right\}.
\]
Then
\[
\p(\Omega_m)\leq\binom{N_m}{2}2^{-|m|}\leq\frac{1}{2}N_{m}^22^{-|m|},
\]
and the Borel-Cantelli Lemma provides that, almost surely, any pair $(\lambda_n, \lambda_r)$ in all but finitely many "rectangles" $I_m$ satisfies
\begin{equation}
\label{eqn:wsangles}
|\theta_n^1-\theta_r^1|>\pi2^{-m_1}\quad\text{or}\quad|\theta_n^2-\theta_r^2|>\pi2^{-m_2}.
\end{equation}
The same argument applies for the right-shifted "rectangles" $I'_m$ of the form
\[
\left\{1-\frac{3\cdot2^{-m_1}}{4}\leq|z_1|<1-\frac{3\cdot2^{-(m_1+1)}}{4}, 1-2^{-m_2}\leq|z_2|<1-2^{-(m_2+1)}\right\}
\]
and the up-shifted "rectangles" $I''_m$ of the form
\[
\left\{1-2^{-m_1}\leq|z_1|<1-2^{-(m_1+1)}, 1-\frac{3\cdot2^{-m_2}}{4}\leq|z_2|<1-\frac{3\cdot2^{-(m_2+1)}}{4}\right\}.
\]
This ensures that all but finitely many pairs $(\lambda_n,\lambda_r)$ in $\Lambda$ so that both
\[
|\lambda_n^1-\lambda_r^1|\simeq 2^{-m_1}
\]
and
\[
|\lambda_n^2-\lambda_r^2|\simeq 2^{-m_2}
\]
have property \eqref{eqn:wsangles}. Therefore, see \cite[Claim, p. 741]{coch} $\Lambda$ is almost surely weakly separated.
\end{proof}

\subsection{Uniform Separation}
While weak separation behaves essentially in the same way as the dimension $d$ grows, the sufficient condition in \eqref{eqn:polydisc:strong} for almost sure uniform separation picks up a dependence on $d$. As it will be shown, this is due to some estimates on the expected value of quantities related to the (random) Gleason distances between the points in $\Lambda$.

It will also be explained how \eqref{eqn:polydisc:strong} can be improved for some choices of $(r_n)_{n\in\N}$. As a corollary, a cut off condition for $\Lambda$ to be almost surely $\h^\infty(\D^d)$-interpolating for some types of random sequences in the polydisc will be given.\\

Let $s_d$ be the Szegö kernel on $\D^d$. Then the Hardy space $\h^2(\D^d)$ is the reproducing kernel Hilbert space $\mathcal{H}_{s_d}$. Denote the normalized Szegö kernel by
\[
S_d(z, w):=\prod_{i=1}^d\frac{\sqrt{(1-|z^i|^2)(1-|w^i|^2)}}{1-z^i\overline{w^i}},
\]
and observe that, for any $z$ and $w$ in $\D^d$,
\begin{equation}
\label{eqn:strong:szego}
\rho_G(z, w)^2=1-\min_{i=1,\dots, d}|S_1(z^i, w^i)|^2.
\end{equation}
Given a random sequence $\Lambda$ in $\D^d$ denote, for the sake of readability,
\[
S^i(n, j):=S_1(\lambda_n^i, \lambda_j^i)
\]
and
\[
S_d(n, j):=S_d(\lambda_n, \lambda_j).
\]
Thanks to \eqref{eqn:strong:szego}, uniform separation can be achieved from weak separation and a uniform bound on sums depending on the random sequences $(S^ i(n, j))_{n, j\in\N}$:
\begin{equation}
\label{eqn:strongweak}
\mathcal{U}(\D^d)=\mathcal{W}(\D^d)\cap\left\{\sup_{n\in\N}\sum_{j\ne n}\min_{i=1,\dots, d}|S^i(n, j)|^2<\infty\right\}.
\end{equation}
Observe that each $(S^i(n, j))_{n, j\in\N}$ is a sequence of random variables on $\Omega$ which is determined, together with $\Lambda$, by $(r_{n})_{n\in\N}$. It is not surprising then that the expectation of $|S^i(n, j)|^2$ depends, for any $i$, $n$ and $j$, only on $r^i_n$ and $r_j^i$:
\begin{lemma}
\label{lemma:exp}
Let $\Lambda$ be a random sequence in $\D^d$. Then, for any $n\ne j$ in $\N$ and for any $i=1,\dots, d$,
\[
\E(|S^i(n, j)|^2)=\frac{\bigg(1-(r^i_n)^2\bigg)\bigg(1-(r^i_j)^2\bigg)}{1-\left(r^i_nr^i_j\right)^2}.
\]
\end{lemma}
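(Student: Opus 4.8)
The plan is to compute the expectation directly from the definition of the normalized Szegő kernel. First I would write, for a fixed coordinate $i$ and indices $n\ne j$,
\[
|S^i(n, j)|^2=\frac{(1-|\lambda_n^i|^2)(1-|\lambda_j^i|^2)}{|1-\lambda_n^i\overline{\lambda_j^i}|^2}=\frac{\big(1-(r^i_n)^2\big)\big(1-(r^i_j)^2\big)}{\big|1-r^i_nr^i_j\,e^{i(\theta^i_n-\theta^i_j)}\big|^2},
\]
using $\lambda_n^i=r^i_ne^{i\theta^i_n}$. The numerator is deterministic, so the only randomness enters through
\[
\big|1-r^i_nr^i_j\,e^{i(\theta^i_n-\theta^i_j)}\big|^2=1-2r^i_nr^i_j\cos(\theta^i_n-\theta^i_j)+(r^i_nr^i_j)^2.
\]

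Next I would identify the distribution of the relevant phase. Since the random vectors $(\theta^1_n,\dots,\theta^d_n)$ and $(\theta^1_j,\dots,\theta^d_j)$ are independent and each is uniformly distributed on $\T^d$, the pair $(\theta^i_n,\theta^i_j)$ consists of two independent random variables, each uniform on $\T$; hence $\psi:=\theta^i_n-\theta^i_j\pmod{2\pi}$ is again uniform on $(0,2\pi)$. Writing $a:=r^i_nr^i_j\in[0,1)$, this reduces the computation to the elementary integral
\[
\E\big(|S^i(n, j)|^2\big)=\big(1-(r^i_n)^2\big)\big(1-(r^i_j)^2\big)\cdot\frac{1}{2\pi}\int_0^{2\pi}\frac{d\psi}{1-2a\cos\psi+a^2}.
\]

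Finally I would evaluate that integral. Recognizing $\dfrac{1-a^2}{1-2a\cos\psi+a^2}$ as the Poisson kernel for the disc, whose integral against $\tfrac{d\psi}{2\pi}$ equals $1$, gives $\dfrac{1}{2\pi}\int_0^{2\pi}\dfrac{d\psi}{1-2a\cos\psi+a^2}=\dfrac{1}{1-a^2}=\dfrac{1}{1-(r^i_nr^i_j)^2}$; alternatively this can be obtained by residues, or by expanding $\dfrac{1}{1-ae^{i\psi}}\cdot\dfrac{1}{1-ae^{-i\psi}}$ as a product of geometric series and integrating term by term. Substituting back yields the claimed identity. There is no serious obstacle here: the only point requiring a moment's care is the reduction to a single uniform phase variable, namely that for $n\ne j$ the difference $\theta^i_n-\theta^i_j$ is uniform on $\T$, which uses the independence of the two blocks of angles together with the fact that each marginal is uniform; everything else is a standard computation.
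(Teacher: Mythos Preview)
Your proof is correct and follows essentially the same route as the paper: both reduce the expectation to the integral $\frac{1}{2\pi}\int_0^{2\pi}\frac{d\psi}{|1-ae^{i\psi}|^2}$ with $a=r^i_nr^i_j$, the paper evaluating it by expanding the kernel as a geometric series and integrating term by term (precisely the alternative you mention), while you invoke the Poisson kernel identity directly.
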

\begin{proof}
Observe that\footnote{The reader should not confuse the index $i=0,\dots,d$ and $i=\sqrt{-1}$!}
\[
\begin{split}
|S^i(n, j)|^2=&\bigg(1-(r^i_n)^2\bigg)\bigg(1-(r^i_j)^2\bigg)\left|\sum_{k=0}^\infty(r^i_nr^i_j)^ke^{-ik(\theta^i_n-\theta^i_j)}\right|^2\\
%=&\bigg(1-(r^i_n)^2\bigg)\bigg(1-(r^i_j)^2\bigg)\left(\sum_{k=0}^\infty(r^i_nr^i_j)^ke^{-ik(\theta^i_n-\theta^i_j)}\right)\left(\sum_{k=0}^\infty(r^i_nr^i_j)^ke^{-ik(\theta^i_j-\theta^i_n)}\right)\\
=&\bigg(1-(r^i_n)^2\bigg)\bigg(1-(r^i_j)^2\bigg)\sum_{k=0}^\infty(r^i_nr^i_j)^k\sum_{l=0}^ke^{i(2l-k)(\theta^i_n-\theta^i_j)}.
\end{split}
\]
Therefore, by making use of the independence of $\theta^i_n$ and $\theta^i_j$,
\[
\begin{split}
\E(|S^i(n, j)|^2)%&\bigg(1-(r^i_n)^2\bigg)\bigg(1-(r^i_j)^2\bigg)\sum_{k=0}^\infty(r^i_nr^i_j)^k\sum_{l=0}^k\E\left(e^{i(2l-k)(\theta^i_n-\theta^i_j)}\right)\\
=&\bigg(1-(r^i_n)^2\bigg)\bigg(1-(r^i_j)^2\bigg)\sum_{k=0}^\infty(r^i_nr^i_j)^k\sum_{l=0}^k\E\left(e^{i(2l-k)\theta^i_n}\right)\E\left(e^{i(k-2l)\theta^i_2}\right)\\
%=&\bigg(1-(r^i_n)^2\bigg)\bigg(1-(r^i_j)^ 2\bigg)\sum_{k=0}^\infty(r^ i_nr^ i_j)^k\sum_{l=0}^k\delta_{2l, k}\\
=&\bigg(1-(r^i_n)^2\bigg)\bigg(1-(r^i_j)^ 2\bigg)\sum_{k=0}^\infty(r^i_nr^i_j)^{2k}\\
=&\frac{\bigg(1-(r^ i_n)^2\bigg)\bigg(1-(r^i_j)^ 2\bigg)}{1-(r^ i_nr^ i_j)^2}.
\end{split}
\]
\end{proof}

\begin{rem}
\label{rem:strong}
Let $m$ and $k$ be two multi-indices in $\N^d$, and suppose that $\lambda_n$ and $\lambda_j$ belong to $I_m$ and $I_k$, respectively. Then, thanks to Lemma \ref{lemma:exp} and \eqref{eqn:rectangles},
\[
\E(|S^i(n, j)|^2)\simeq\frac{2^{-(m_i+k_i)}}{2^{-m_i}+2^{-k_i}-2^{-(m_i+k_i)}}=\frac{1}{2^{k_i}+2^{m_i}-1}\simeq\frac{1}{2^{k_i}+2^{m_i}}.
\]
In particular, since $S^i(n, j)$ and $S^r(n, j)$ are independent for any $i\ne r$, we have
\[
\E(|S_d(n, j)|^2)\simeq\prod_{i=1}^d\frac{1}{2^{k_i}+2^{m_i}}.
\]
\end{rem}
Part (ii) of Theorem \ref{theo:polydisc} can now be proved:
\begin{proof}[Proof of Theorem \ref{theo:polydisc}, \textnormal{(ii)}] Observe that
\[
\sum_{m\in\N^d}N_m^22^{-|m|}\leq\sum_{m\in\N^d}N_m^{1+\frac{1}{d}}2^{-\frac{|m|}{d}},
\]
whenever $N_m\leq2^{|m|}$, and so under our assumption $\Lambda$ is weakly separated, thanks to Theorem \ref{theo:polydisc}, part (i). Therefore, thanks to \eqref{eqn:strongweak}, it suffices to show that the random sequence $(S_n)_{n\in\N}$ given by
\[
S_n:=\sum_{j\ne n}\min_{i=1,\dots,d}|S^i(n, j)|^2
\]
is bounded almost surely. Thanks to Lemma \ref{lemma:finitesumas}, it is enough to show that
\begin{equation}
\label{eqn:sumexp}
\sum_{j\in\N}\left(\sum_{n\in\N}\E\left(|S_d(n, j)|^2\right)\right)^\frac{1}{d}<\infty.
\end{equation}
By regrouping the terms of the double sum in \eqref{eqn:sumexp} with respect the partition $(I_m)_{m\in\N^d}$ of $\D^d$ and thanks to Remark \ref{rem:strong} and \eqref{eqn:jensen} we get
\[
\begin{split}
&\sum_{j\in\N}\left(\sum_{n\in\N}\E\left(|S_d(n, j)|^2\right)\right)^\frac{1}{d}\\
=&\sum_{m\in\N^d}\sum_{\lambda_n\in I_m}\left(\sum_{k\in\N^d}\sum_{\lambda_j\in I_k}\E(|S_d(n, j)|^2)\right)^\frac{1}{d}\\
\simeq&\sum_{m\in\N^d}N_m\left(\sum_{k\in\N^d}N_k\prod_{i=1}^d\frac{1}{2^{m_i}+2^{k_i}}\right)^\frac{1}{d}\\
\leq&\sum_{m\in\N^d}N_m\sum_{k\in\N^d}N_k^\frac{1}{d}\prod_{i=1}^d\left(\frac{1}{2^{m_i}+2^{k_i}}\right)^\frac{1}{d}.
\end{split}
\]
Corollary \ref{coro:strong}, $d=s$, concludes the proof.
\end{proof}
Condition \eqref{eqn:polydisc:strong} is not sharp. Indeed,  for some choices of $(r_n)_{n\in\N}$, we can show that the $0-1$ Kolmogorov law for $\h^\infty(\D^d)$-interpolating sequences coincide with the one for weak separation:
\begin{prop}
\label{prop:example}
Let $d=2$ and $(t_n)_{n\in\N}$ be a sequence in $(0, 1)$, and consider its Cartesian product with itself
\[
r_{n}:=(t_{n_1}, t_{n_2})\qquad n=(n_1, n_2)\in\N^2.
\]
Then the random sequence $\Lambda$ associated with $(r_n)_{n\in\N^2}$ is interpolating for $\h^\infty(\D^d)$ almost surely if an only if \eqref{eqn:sum:polydisc} holds.
\end{prop}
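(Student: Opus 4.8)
The plan is to exploit the product structure of the radii to show that, for this special family, the separation conditions \eqref{eqn:ss} and \eqref{eqn:ws} of Theorem \ref{theo:poly:interp}, together with the Carleson measure condition, all become controlled by the single sum \eqref{eqn:sum:polydisc}. Since Corollary after Theorem \ref{theo:polydisc} already gives that the sum in \eqref{eqn:sum:polydisc} diverging forces $\p(\mathcal{I}(\D^2))=0$, and since parts (i) and (iii) of Theorem \ref{theo:polydisc} give that \eqref{eqn:sum:polydisc} implies $\p(\mathcal{W}(\D^2))=1$ and $\p(\mathcal{C}(\h^2(\D^2)))=1$, the only thing left to prove is that, \emph{for this choice of radii}, condition \eqref{eqn:sum:polydisc} already forces $\p(\mathcal{U}(\D^2))=1$; then Theorem \ref{theo:poly:interp}(a)$\Rightarrow$(b) closes the argument. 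Equivalently, via \eqref{eqn:strongweak}, I must show that under \eqref{eqn:sum:polydisc} the random quantity $\sup_n\sum_{j\ne n}\min_i|S^i(n,j)|^2$ is a.s. finite.

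First I would set up bookkeeping adapted to the product structure. Write $M_k := \#\{\,\ell : t_\ell \in J_k\,\}$ where $J_k = \{t : 1-2^{-k}\le t < 1-2^{-(k+1)}\}$ is the $k$th dyadic annulus of $\D$. Then a point $\lambda_n = (t_{n_1}e^{i\theta^1_n}, t_{n_2}e^{i\theta^2_n})$ lies in the rectangle $I_m$ precisely when $t_{n_1}\in J_{m_1}$ and $t_{n_2}\in J_{m_2}$, so $N_m = M_{m_1}M_{m_2}$ and hence \eqref{eqn:sum:polydisc} reads $\sum_{m\in\N^2} M_{m_1}^2 M_{m_2}^2 2^{-(m_1+m_2)} = \bigl(\sum_{k} M_k^2 2^{-k}\bigr)^2 < \infty$, i.e. $\sum_k M_k^2 2^{-k} < \infty$. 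So the hypothesis is just Cochran–Rudowicz's one-dimensional condition applied to $(t_\ell)$. The key gain from the product structure is that in the sum $\sum_{j\ne n}\min_i |S^i(n,j)|^2 \le \sum_j |S^1(n,j)|^2 \wedge |S^2(n,j)|^2$, the two coordinates decouple: the first coordinate of $\lambda_j$ ranges over a one-dimensional random sequence built from $(t_\ell)$, and likewise the second. I would estimate, using Remark \ref{rem:strong} and Lemma \ref{lemma:finitesumas} with $d=2$, the expectation
\[
\sum_{j\in\N}\Bigl(\sum_{n\in\N}\E(|S_2(n,j)|^2)\Bigr)^{1/2}
\simeq \sum_{m\in\N^2} N_m\Bigl(\sum_{k\in\N^2} N_k \prod_{i=1}^2\frac{1}{2^{m_i}+2^{k_i}}\Bigr)^{1/2},
\]
and now substitute $N_m = M_{m_1}M_{m_2}$ and factor the inner double sum as a product of two one-dimensional sums $\sum_{k_i} M_{k_i}(2^{m_i}+2^{k_i})^{-1}$. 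Each of those is $\lesssim M_{m_i}2^{-m_i} + (\text{a convolution tail})$, and by the one-dimensional Cochran–Rudowicz estimate (Lemma \ref{lemma:strong:ineq} with $s=1$, applied to $A=B=(M_k)$) one has $\sum_m M_m(\sum_k M_k(2^m+2^k)^{-1})^{1/1}\cdots$ under control; the square root splits the product, and Cauchy–Schwarz (or directly Lemma \ref{lemma:strong:ineq}) brings everything back to $\sum_k M_k^2 2^{-k} < \infty$.

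The main obstacle is the bookkeeping in this last step: after taking the square root one gets a product of two square-rooted one-dimensional sums, and one must recombine the $m_1$-sum and $m_2$-sum correctly — a naive application of Cauchy–Schwarz in each variable separately loses the diagonal term that Lemma \ref{lemma:strong:ineq} is designed to isolate. I expect the cleanest route is: bound $\min_i|S^i(n,j)|^2 \le |S^1(n,j)|\,|S^2(n,j)|$ (geometric mean beats the minimum), take expectations using independence of the two coordinates, and recognize the resulting sum as a product over $i=1,2$ of the \emph{one-dimensional} quantity that Rudowicz already controls under $\sum_k M_k^2 2^{-k}<\infty$; then Lemma \ref{lemma:finitesumas}'s Jensen step turns the finiteness of this expectation into a.s. boundedness of $\sup_n\sum_{j\ne n}\min_i|S^i(n,j)|^2$. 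Finally, combining a.s. uniform separation with Theorem \ref{theo:poly:interp}(a)$\Rightarrow$(b) yields $\p(\mathcal{I}(\D^2))=1$, and the converse is the Corollary, completing the proof.
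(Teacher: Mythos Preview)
Your plan has a genuine gap: the route through Lemma~\ref{lemma:finitesumas} (or its geometric-mean variant $\min_i|S^i(n,j)|^2\le|S^1(n,j)|\,|S^2(n,j)|$) cannot reach \eqref{eqn:sum:polydisc}. After substituting $N_m=M_{m_1}M_{m_2}$ and factoring, the quantity you must control is
\[
\Bigl(\sum_{k\in\N}M_k\Bigl(\sum_{m\in\N}\frac{M_m}{2^m+2^k}\Bigr)^{1/2}\Bigr)^{2},
\]
and this is \emph{not} bounded by $\sum_k M_k^22^{-k}$. Take $M_k\asymp 2^{k/2}/k$: then $\sum_k M_k^22^{-k}\asymp\sum_k k^{-2}<\infty$, but the inner sum above is $\asymp 2^{-k/2}/k$, so the outer sum is $\asymp\sum_k 2^{k/4}k^{-3/2}=\infty$. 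Any first-moment estimate of this kind lands you back at condition \eqref{eqn:polydisc:strong}, which remains strictly stronger than \eqref{eqn:sum:polydisc} even for product radii. There is also a conceptual obstruction to your ``recognize the sum as a product of one-dimensional Rudowicz quantities'' heuristic: in the standard random model the first-coordinate projection of $\Lambda$ places, for each fixed $n_1$, infinitely many points $t_{n_1}e^{i\theta^1_{(n_1,n_2)}}$ (one for every $n_2\in\N$) on the same circle with \emph{independent} arguments, so that projection is not a one-dimensional random sequence in Rudowicz's sense---it is not even weakly separated.

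The paper does not attempt to prove $\p(\mathcal{U}(\D^2))=1$ at all. Instead it realizes $\Lambda$ on the product probability space $\Omega\times\Omega$ so that the argument of the $i$-th coordinate of $\lambda_{(n_1,n_2)}$ depends only on $n_i$; then each coordinate projection genuinely \emph{is} a one-dimensional Rudowicz sequence $(\tau_{n_i})_{n_i}$, hence almost surely $\h^\infty(\D)$-interpolating, hence admits P.~Beurling functions $(F_{\omega_i,n_i})_{n_i}$. Their tensor products
\[
G_{(n_1,n_2)}(z_1,z_2)=F_{\omega_1,n_1}(z_1)\,F_{\omega_2,n_2}(z_2)
\]
are P.~Beurling functions for $\Lambda(\omega_1,\omega_2)$, yielding $\h^\infty(\D^2)$-interpolation directly---condition (b) of Theorem~\ref{theo:poly:interp}---without passing through the uniform-separation condition (a). In short, the product structure is exploited at the level of the interpolating \emph{functions}, not at the level of separation estimates; whether \eqref{eqn:sum:polydisc} actually forces $\p(\mathcal{U}(\D^2))=1$ in this setting is not settled by either argument.
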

\begin{proof}
If $\sum_{m\in\N^2}N_m^22^{-|m|}=\infty$, then $\Lambda$ is not weakly separated almost surely, and in particular it is almost surely not interpolating. Thus it suffices to show that $\Lambda$ is $\h^\infty(\D^d)$-interpolating provided that  $\sum_{m\in\N^2}N_m^22^{-|m|}<\infty$, which, by construction of $(r_n)_{n\in\N^2}$, it is equivalent to
\[
\sum_{n\in\N}T_n^22^{-n}<\infty,
\]
 where $T_n:=\#\{l\in\N \quad|\quad 1-2^{-n}\leq t_l<1-2^{-(n+1)}\}$. By Rudowicz's Theorem, \cite{rudo}, the random sequence $T$ on $\D$ given by
\[
\tau_n:=t_ne^{i\theta_n}\qquad n\in\N
\]
is almost surely interpolating in $\D$, where $(\theta_n)_{n\in\N}$ is a sequence of i.i.d. random variables defined on a probability space $(\Omega, \A, \p)$ and distributed uniformly on the unit circle. In particular, $T$ has almost surely a sequence of so called \emph{P. Beurling functions}, that is, there exists an event $\Omega'$ so that  $\p(\Omega')=1$ and, for any $\omega$ in $\Omega'$, there exists a sequence of $\mathrm{H}^\infty(\D)$ functions $(F_{\omega, n})_{n\in\N}$ such that
\[
\begin{cases}
F_{\omega, n}(\tau_j(\omega))=\delta_{n, j}\\
\sup_{z\in\D}\sum_{n\in\N}|F_{\omega, n}(z)|<\infty.
\end{cases}
\]
Let us consider now the product probability space $(\tilde{\Omega}, \tilde{\A}, \tilde{\p})$, where $\tilde{\Omega}:=\Omega\times\Omega$, $\tilde{\A}$ is the product $\sigma$-algebra of $\A$ with itself, and
\[
\tilde{\p}(A\times B)=\p(A)\p(B),\qquad A, B\in\A.
\]
Then the random variables
\[
\theta_{n_1, n_2}\colon\tilde{\Omega}\to\T^2
\]
given by
\[
\theta_{n_1, n_2}(\omega_1, \omega_2):=(\theta_{n_1}(\omega_1), \theta_{n_2}(\omega_2))
\]
are uniformly distributed in $\T^2$ and independent. Thus we can think of the random sequence $\Lambda$ as
\[
\lambda_{n_1, n_2}(\omega_1, \omega_2):=(r_{n_1}e^{i\theta_{n_1}(\omega_1)}, r_{n_2}e^{i\theta_{n_2}(\omega_2)})\qquad(\omega_1, \omega_2)\in\tilde{\Omega}.
\]
Let $\Omega'':=\Omega'\times\Omega'$ and define, for any $n=(n_1, n_2)$ in $\N^2$ and $\tilde{\omega}=(\omega_1, \omega_2)$ in $\Omega''$ the $\h^\infty(\D^2)$ function
\[
G_{\tilde{\omega}, n}(z_1, z_2)=F_{\omega_1, n_1}(z_1)~F_{\omega_2, n_2}(z_2)\qquad(z_1, z_2)\in\D^2.
\]
Then $(G_{\tilde{\omega}, n})_{n\in\N^2}$ is a set of P. Beurling functions for $\Lambda(\tilde{\omega})$, and in particular $\Lambda(\tilde{\omega})$ is $\h^\infty(\D^d)$-interpolating for any $\tilde{\omega}$ in $\Omega''$. Since $\tilde{\p}(\Omega'')=\p(\Omega')^2=1$, $\Lambda$ is interpolating for $\h^\infty(\D^d)$ almost surely.
\end{proof}

The argument in Proposition \ref{prop:example} can be easily extended to any $d>1$ to show that, whenever the sequence of radii $(r_n)_{n\in\N}$ is the Cartesian product of $d$ sequences in $[0, 1)$, then \eqref{eqn:sum:polydisc} encodes all random sequences that are almost surely interpolating for $\h^\infty(\D^d)$. For a general choice of $(r_n)_{n\in\N}$ the following question remains open:
\begin{question}
\label{q:wi}
Is any random sequence $\Lambda$ in $\D^d$ satisfying \eqref{eqn:sum:polydisc} uniformly separated?  Or else, does there exist a choice of $(r_n)_{n\in\N}$ so that the random sequence $\Lambda$ obtained is almost surely weakly separated but not uniformly separated?
\end{question}
\subsection{Carleson Measures}
\label{sec:cm}
The same idea that was used for random uniform separation works for the proof of Theorem \ref{theo:polydisc}, part (iii), modulo some adaptations. Let $Z=(z_n)_{n\in\N}$ be a sequence in $\D^d$ and consider the Szegö Grammian
\[
G:=(S_d(z_n, z_j))_{n, j\in\N}
\]
associated with the sequence $Z$. Therefore,
\begin{theo}
\label{theo:carl}
The following are equivalent:
\begin{description}
\item[(i)] $\mu_Z$ is a Carleson measure for $\h^2(\D^d)$;
\item[(ii)] $G\colon l^2\to l^2$ is bounded.
\end{description}
\end{theo}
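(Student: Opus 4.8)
The plan is to prove the equivalence of the Carleson measure condition for $\mu_Z$ and the boundedness of the Szeg\"o Grammian $G$ on $\ell^2$ via the standard duality between Carleson embeddings and the boundedness of the Gram operator associated to the normalized reproducing kernels. First I would recall that $\mu_Z$ being a Carleson measure for $\h^2(\D^d)$ means that the embedding $J\colon \h^2(\D^d)\to \mathrm{L}^2(\D^d,\mu_Z)$, $f\mapsto f$, is bounded. Since $\mathrm{L}^2(\D^d,\mu_Z)$ is isometrically identified with $\ell^2$ via $f\mapsto \left(\prod_{i=1}^d\sqrt{1-|z_n^i|^2}\, f(z_n)\right)_{n\in\N}$ (because $\mu_Z$ is the weighted counting measure with weights $\prod_i(1-|z_n^i|^2)$), boundedness of $J$ is equivalent to boundedness of the map $T\colon \h^2(\D^d)\to \ell^2$, $Tf = \left(\prod_i\sqrt{1-|z_n^i|^2}\,f(z_n)\right)_n = \big(\langle f, \kappa_n\rangle\big)_n$, where $\kappa_n$ is the normalized Szeg\"o kernel at $z_n$, i.e.\ $\kappa_n = s_d(\cdot,z_n)/\|s_d(\cdot,z_n)\|$.

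Next I would compute the adjoint. The adjoint $T^*\colon \ell^2\to\h^2(\D^d)$ is $T^*(a_n)_n = \sum_n a_n\kappa_n$, so that $TT^*\colon\ell^2\to\ell^2$ has matrix $(\langle \kappa_j,\kappa_n\rangle)_{n,j} = (S_d(z_n,z_j))_{n,j} = G$, using that $\langle \kappa_j,\kappa_n\rangle = s_d(z_n,z_j)/(\|s_d(\cdot,z_n)\|\,\|s_d(\cdot,z_j)\|) = S_d(z_n,z_j)$ by the reproducing property. Since $\|T\|^2 = \|T^*\|^2 = \|TT^*\| = \|G\|$, we get that $T$ is bounded if and only if $G$ is bounded on $\ell^2$, which is precisely the asserted equivalence (i)$\iff$(ii). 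One should also check that $T^*$ genuinely maps into $\h^2(\D^d)$, or argue at the level of formal/densely defined operators and finite sections: work first with sequences $(a_n)$ of finite support, where everything is well-defined, observe $\langle TT^*a,a\rangle = \sum_{n,j}S_d(z_n,z_j)a_n\overline{a_j}\ge 0$, and conclude by a standard limiting/density argument that the bound $\langle G a, a\rangle \le C\|a\|^2$ on finitely supported sequences is equivalent to $\|T\|^2\le C$.

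The main technical point — and the one deserving the most care — is the identification of $\mathrm{L}^2(\D^d,\mu_Z)$ with $\ell^2(\N)$ and the verification that under this identification the Carleson embedding becomes exactly the co-analytic moment map $f\mapsto(\langle f,\kappa_n\rangle)_n$; this hinges on the specific normalization $\|s_d(\cdot,z_n)\|^2 = s_d(z_n,z_n) = \prod_{i=1}^d(1-|z_n^i|^2)^{-1}$, so that $\prod_i\sqrt{1-|z_n^i|^2}\,f(z_n) = \langle f, \kappa_n\rangle$. Once this bookkeeping is in place, the rest is the soft functional-analytic fact $\|T\|^2=\|TT^*\|$. I do not anticipate a serious obstacle here; the subtlety is purely that one must be careful about which operator is densely defined and close the limiting argument on finite sections rather than manipulate unbounded sums formally. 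Note also that a subtlety \emph{not} needed: one does not need weak separation or any lower bound on $G$, since only the Carleson (embedding/upper bound) side is at issue.
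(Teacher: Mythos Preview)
Your argument is correct and is precisely the standard proof: identify the Carleson embedding with the operator $T\colon f\mapsto(\langle f,\kappa_n\rangle)_n$, compute $TT^*=G$, and use $\|T\|^2=\|TT^*\|$. The paper does not actually give its own proof of this statement; it simply cites \cite[Th.~9.5]{john} (Agler--M\textsuperscript{c}Carthy), whose proof is essentially the one you have written, so there is nothing to compare.
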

A proof of Theorem \ref{theo:carl} can be found in \cite[Th. 9.5]{john}. Moreover, a standard operator theory argument gives that any sufficiently strong decay of the coefficients of $G$ outside its diagonal implies that $G$ is bounded (above and below):
\begin{lemma}
\label{lemma:Grammian}
Let $A=(a_{n, j})_{n, j\in\N}\colon l^2\to l^2$ be invertible and self adjoint. Suppose that $a_{i,i}=1$ for any $i$ in $\N$, and that
\begin{equation}
\label{eqn:hs}
\sum_{j\in\N}\sum_{n\ne j}|a_{n, j}|^2=M^2<\infty.
\end{equation}
Then $A$ is bounded above and below.
\end{lemma}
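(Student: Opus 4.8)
The plan is to realize $A$ as a perturbation of the identity by a Hilbert--Schmidt operator and extract both bounds from that. First I would set $B := A - I$. Since $a_{i,i} = 1$ for every $i$, the matrix of $B$ is precisely the off-diagonal part of $A$, i.e. $b_{n,j} = a_{n,j}$ for $n \ne j$ and $b_{j,j} = 0$. Hypothesis \eqref{eqn:hs} then reads
\[
\|B\|_{\mathrm{HS}}^2 = \sum_{j\in\N}\sum_{n\ne j}|a_{n,j}|^2 = M^2 < \infty ,
\]
so $B$ is a Hilbert--Schmidt operator on $l^2$; in particular it is bounded with $\|B\| \le \|B\|_{\mathrm{HS}} = M$, it is compact, and it is self-adjoint because $A$ is.

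The bound from above is then immediate: $\|A\| = \|I + B\| \le 1 + \|B\| \le 1 + M$, so $A\colon l^2 \to l^2$ is bounded. For the bound from below I would invoke the spectral theorem for the compact self-adjoint operator $B$: it has an orthonormal basis of eigenvectors with eigenvalues $(\mu_k)_k$ whose only possible accumulation point is $0$, whence $A = I + B$ has the same eigenvectors with eigenvalues $(1+\mu_k)_k$, and $\sigma(A)$ can accumulate only at $1$. Every point of $\sigma(A)\setminus\{1\}$ is therefore an isolated eigenvalue of finite multiplicity, so invertibility of $A$ forces $0 \notin \sigma(A)$. Since $\sigma(A)$ is a closed subset of $\R$ with $1$ as its only possible cluster point and $1 \ne 0$, the quantity $c := \operatorname{dist}(0, \sigma(A))$ is strictly positive, and hence $\|Ax\| \ge c\|x\|$ for all $x \in l^2$.

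The only delicate point is this last step: it is compactness of $B$ (equivalently, the quantitative Hilbert--Schmidt control of \eqref{eqn:hs}, not merely ``smallness'' of the off-diagonal entries) that prevents $\sigma(A)$ from clustering at $0$, so that the mere hypothesis that $A$ is invertible upgrades to a uniform lower bound; note that no smallness of $M$ is required. In the application $A$ is the normalized Szegő Grammian $G$ of Theorem \ref{theo:carl}, which is automatically self-adjoint and, for a sequence of distinct points, invertible, so once \eqref{eqn:hs} has been verified for the random sequence $\Lambda$ under \eqref{eqn:sum:polydisc}, this lemma gives that $G$ is bounded above and below and therefore that $\Lambda$ is $\h^2(\D^d)$-interpolating.
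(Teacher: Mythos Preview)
Your argument is correct and is essentially the paper's own proof: write $A=I+H$ with $H$ Hilbert--Schmidt (hence compact, self-adjoint), observe that the eigenvalues of $A$ are $1+\mu_k$ with $\mu_k\to0$, and combine invertibility with the fact that the spectrum accumulates only at $1$ to get the lower bound. The paper phrases the last step as ``$\sup_n|y_n|$ and $\inf_n|y_n|$ are finite and positive because $x_n\to0$,'' which is exactly your distance-to-spectrum argument; the paper even remarks afterward that only compactness of $H$, not the full Hilbert--Schmidt bound, is used. Your closing paragraph on the application is commentary rather than part of the proof; note only that the invertibility of the infinite Grammian is taken as a hypothesis here (as in the paper's later discussion), not something that follows automatically from the points being distinct.
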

\begin{proof}
Such an $A$ can be written as $A=Id+H$, where $H$ is a Hilbert-Schmidt operator. Let $(y_n)_{n\in\N}$ be the sequence of eigenvalues of $A$, and let $(x_n)_{n\in\N}$ be the eigenvalues of $H$. Since $H$ is a Hilbert-Schmidt operator, then
\[
\sum_{n\in\N}|x_n|^2<\infty,
\]
and since $A=Id+H$ we have that $y_n=1+x_n$ for any $n$. Since $A$ is invertible, none of the $y_n$ is null.  Moreover, being a self-adjoint infinite matrix, $A$ is bounded by $\sup_{n\in\N}|y_n|$ and bounded below by $\inf_{n\in\N}|y_n|$. Since $x_n$ converges to $0$, the two quantities are bounded above and below, hence the result.
\end{proof}
\begin{rem}
In the above proof one uses only the fact that $x_n$ goes to $0$, as $n\to\infty$. Therefore the same conclusion holds if we assume $H$ to be compact.
\end{rem}
Let $\Lambda$ be a random sequence in $\D^d$. Thanks to Lemma \ref{lemma:Grammian}, to show that $\p(\mathcal{C}(\h^2(\D^d)))=1$ it is enough to show that the random Grammian associated to $\Lambda$ has a strong decay outside its diagonal almost surely:
\begin{proof}[Proof of Theorem \ref{theo:polydisc},(iii)]
It suffices to show that
\begin{equation}
\label{eqn:carl:sum}
\sum_{j\in\N}\sum_{n\ne j}\E(|S_d(n, j)|^2)<\infty.
\end{equation}
Indeed, if \eqref{eqn:carl:sum} holds, then
\[
\sum_{j\in\N}\sum_{n\ne j}|S_d(n, j)|^2<\infty
\]
almost surely, and Lemma \ref{lemma:Grammian} would conclude the proof.  By Remark \ref{rem:strong} and by regrouping the sum in \eqref{eqn:carl:sum} with respect the partition $(I_m)_{m\in\N^d}$ of $\D^d$, one obtains
\[
\sum_{j\in\N}\sum_{n\ne j}\E(|S_d(n, j)|^2)\leq C~\sum_{m, k\in\N^d}N_mN_k\left(\prod_{i=1}^d\frac{1}{2^{m_i}+2^{k_i}}\right).
\]
Corollary \ref{coro:strong}, $s=1$, concludes the proof.
\end{proof}
\subsection{Almost Orthogonal Random Grammians}
\label{sec:hs}
Equation \eqref{eqn:hs} is a rather strong condition for an infinite matrix $A$. Indeed, other than implying that $A$ is bounded, it says that $A-Id$ is a \emph{Hilbert-Schmidt operator} on $l^2$, i.e., that for any choice of an orthonormal basis $(e_n)_{n\in\N}$ of $l^2$
\[
\sum_{n\in\N}||(A-Id)e_n||^2<\infty.
\]
If $A=G$ is a Szegö Grammian associated to a sequence $Z=(z_n)_{n\in\N}$ in the polydisc, it comes natural to ask whether such an almost orthogonality condition on the kernels at the points of $Z$ translates to interpolation properties on the points of the sequences:
\begin{question}
\label{q:hs}
Let $d\geq2$. Is a sequence $Z$ in $\D^d$ intepolating for $\h^\infty(\D^d)$, provided that its Szegö Grammian can be written as $G=Id+H$, where $H$ is a Hilbert-Schmidt operator on $l^2$?
\end{question}
The case $d=1$ of Question \ref{q:hs} has a positive answer. For any  sequence $Z$ in the unit disc, let
\[
\delta_n:=\prod_{j\ne n}\rho(z_n, z_j)
\]
be the hyperbolic distance from $z_n$ to the rest of the sequence. By Carleson interpolation Theorem, $Z$ is interpolating if and only if $\inf_{n\in\N}\delta_n>0$. On the other hand, \cite{thin}, $G-Id$ is a Hilbert-Schmidt operator if and only if
\[
\sum_{n\in\N}1-\delta_n<\infty,
\]
giving that $Z$ is interpolating rather comfortably.\\
Another motivation to answer Question \ref{q:hs} comes from random interpolating sequences for $\h^\infty(\D^d)$. We proved in Section \ref{sec:cm} that the random Grammian associated to a random sequence $\Lambda$ in the polydisc differs from the identity by a Hilbert-Schmidt operator, provided that the sum in $\eqref{eqn:sum:polydisc}$. Conversely, if $Z$ is not weakly separated, then infinitely many entries outside the diagonal of its Szegö Grammian are arbitrarily close to $1$ in absolute value, hence $G-Id$ is not Hilbert-Schmidt. Namely,
\begin{equation}
\label{eqn:01law}
\p(G-Id\quad\text{is Hilbert-Schmidt})=\begin{cases}
1\quad\text{if}\,&\sum_{m\in\N^d}N_m^22^{-|m|}<\infty\\
0\quad\text{if}\,&\sum_{m\in\N^d}N_m^22^{-|m|}=\infty.
\end{cases}
\end{equation}
In particular, a positive answer to Question \ref{q:hs} would imply that the event $\mathcal{I}(\D^d)$ follows the same $0-1$ law of \eqref{eqn:01law}, giving the $0-1$ las for random $\h^\infty(\D^d)$-interpolating sequences.\\
Moreover, \eqref{eqn:01law} helps understanding interpolating sequences for $\h^2(\D^d)$, and it implies Theorem \ref{theo:intpolyh2}. Indeed, any invertible Szegö Grammian $(S_d(z_n, z_j))_{n, j\in\N}$ that can be written as $G=Id+H$, where $H$ is Hilbert-Schimdt, is bounded above and below, thanks to Lemma \ref{lemma:Grammian}, which in turn is equivalent to $(z_n)_{n\in\N}$ being interpolating for $\h^2(\D^d)$. On the other hand, as pointed out above if $Z$ is not weakly separated then infinitely many pairs of normalized Szegö kernels at the points of $Z$ are at an angle arbitrarily close to $0$, and hence $G$ is not bounded below. Thus
\[
\p(\tilde{\mathcal{I}}(\D^d))=\begin{cases}
1\quad\text{if}\,&\sum_{m\in\N^d}N_m^22^{-|m|}<\infty\\
0\quad\text{if}\,&\sum_{m\in\N^d}N_m^22^{-|m|}=\infty
\end{cases}.
\]
\section{Random Separation in the  Unit Ball}
\label{sec:ball}
This section is devoted to the proof of Theorem \ref{interpolation:besove}. On the other hand, we will study the uniform separation on the unit ball.
Compared with the polydisc, we use more heavily the spherical geometry of the unit ball rather than the Euclidean geometry of the Hardy spaces involved. So, the techniques used in this section are different from the previous sections.

Recall that $\Lambda(\omega)=\left\{\lambda_{j}\right\}$ with $\lambda_{j}=\rho_{j} \xi_j(\omega)$ where $\xi_{j}(\omega)$ is a sequence of independent
random variables, all uniformly distributed on the unit sphere and $\rho_{j} \in[0,1)$ is a sequence of a priori fixed radii. Depending on the distribution conditions on $\{\rho_{j}\}$ as will be discussed below, the probability that $\Lambda(\omega)$ is interpolating for Besov-Sobolev spaces $B_{p}^{\sigma}\left(\mathbb{B}_{d}\right)$, where $0 < \sigma\leq1 / 2$ is studied.

The Bergman tree $\mathcal{T}_{d}$ associated to the ball $\mathbb{B}_{d}$ with the structure constants $1$ and $\frac{\ln 2}{2}$ is needed in the analysis, so we present here some details. More information can be found in \cite[pg 17]{Arcozzi2006}. Let $\rho$ be the pseudo-hyperbolic distance on the unit ball, thus
$\rho(z,w)=|\varphi_{z}(w)|$ where $\varphi_{z}(w)$ is the M$\ddot{\text{o}}$bius transform.
The Bergman metric on the unit ball $\mathbb{B}_{d}$ in $\mathbb{C}^{d}$ is given by
$$ \beta(z,w)=\frac{1}{2}\log\frac{1+\rho(z,w)}{1-\rho(z,w)}. $$ 
Further, for any $r>0$, we define
$$\mathcal{U}_{r}=\partial B_{\beta}(0, r)=\left\{z \in \mathbb{B}_{d}: \beta(0, z)=r\right\}.$$
For any $N\in \mathbb{N}$, according to \cite[Lemma 2.6]{Arcozzi2006} and the fact that $\mathcal{U}_{r}$ is a compact set, there is a positive integer $J$, a set of points $\{z^N_{j}\}_{j=1}^J$ and a set of subsets $\{Q_j^N\}_{j=1}^N$ of $\mathcal{U}_{ \frac{N\ln 2}{2}}$ such that
$$\mathcal{U}_{ \frac{N\ln 2}{2}}=\bigcup_{j=1}^J Q_j^N,$$
$$Q_i^N \cap Q_j^N=\emptyset \text{ when }i\neq j,$$
$$\mathcal{U}_{\frac{N\ln 2}{2}}\cap B_{\beta}\left(z^N_{j}, 1\right) \subset Q^N_{j} \subset \mathcal{U}_{\frac{N\ln 2}{2}}\cap B_{\beta}\left(z^N_{j}, 2 \right).$$
Let
$$K_{j}^{N}=\left\{z \in \mathbb{B}_{n}: \frac{N\ln 2}{2} \leq \beta(0, z)< \frac{(N+1)\ln 2}{2}, P_{N} z \in Q_{j}^{N}\right\},$$
where $P_{N} z$ denote the radial projection of $z$ onto the sphere $\mathcal{U}_{\frac{N\ln 2}{2}}$.
Define a tree structure on the collection of sets
\[
\mathcal{T}_{d}=\left\{K_{j}^{N}\right\}_{N \geq 0, j \geq 1}
\]
by declaring that $K_{i}^{N+1}$ is a child of $K_{j}^{N},$ written $K_{i}^{N+1} \geq K_{j}^{N},$ if the projection $P_{N }\left(z_{i}^{N+1}\right)$ of $z_{i}^{N+1}$ onto the sphere $\mathcal{U}_{\frac{N\ln 2}{2}}$ lies in $Q_{j}^{N} .$
For any $K_{j}^{N}\in \mathcal{T}_{d}$, we define $d(K_{j}^{N})$ by
$$d(K_{j}^{N})=N.$$

Given a non-negative function $h$ on $\mathbb{N}$, we say $h$ is summable if 
$$\sum_{N\in \mathbb{N}}h(N)<+\infty.$$ For $\sigma>0$, a measure $\mu$ satisfies the strengthened simple condition if there is a summable function $h(\cdot)$ such that
$$
2^{2 \sigma d(\alpha)} I^{*} \mu(\alpha) \leq C h(d(\alpha)), \quad \alpha \in \mathcal{T}_{d},
$$
where
$$I^{*} \mu(\alpha)=\sum_{\alpha'\geq\alpha,\alpha'\in\mathcal{T}_{d}}\mu(\alpha').$$
The following lemma follows from \cite[Lemma 32 and Theorem 23]{Arcozzi}.
\begin{lemma}\label{carleson}
Let $\sigma>0 .$ If $\mu$ satisfies the strengthened simple condition, then $\mu$ is a $B_{2}^{\sigma}\left(\mathbb{B}_{d}\right)$-Carleson measure on $\mathbb{B}_{d}$.
\end{lemma}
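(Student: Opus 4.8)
\emph{Proof strategy.} The statement is quoted from \cite[Lemma 32 and Theorem 23]{Arcozzi}; the plan is to pass to the discrete model on the Bergman tree $\mathcal{T}_d$ and verify the tree condition that characterizes $B_2^\sigma(\mathbb{B}_d)$-Carleson measures there. First I would recall, from \cite[Theorem 23]{Arcozzi}, that a positive measure $\mu$ on $\mathbb{B}_d$ is a $B_2^\sigma(\mathbb{B}_d)$-Carleson measure if and only if it satisfies the tree condition
$$\sum_{\beta\geq\alpha}2^{2\sigma d(\beta)}\bigl(I^{*}\mu(\beta)\bigr)^{2}\leq C\,I^{*}\mu(\alpha)<\infty,\qquad \alpha\in\mathcal{T}_d,$$
where $\beta\geq\alpha$ means that the box $\beta$ lies in the subtree of $\mathcal{T}_d$ rooted at $\alpha$. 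The translation between the measure on $\mathbb{B}_d$ and its counterpart on $\mathcal{T}_d$ is harmless here because on a box $\alpha$ one has $1-|z|^2\simeq 2^{-d(\alpha)}$, so the weight $(1-|z|^2)^{2\sigma}\simeq 2^{-2\sigma d(\alpha)}$ is exactly the one built into the simple and tree conditions; taking only the term $\beta=\alpha$ above already recovers the (un-strengthened) simple condition $2^{2\sigma d(\alpha)}I^{*}\mu(\alpha)\leq C$.

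Next I would show that the strengthened simple condition forces the tree condition, which is the content of \cite[Lemma 32]{Arcozzi} and amounts to a short Fubini/telescoping computation. Assume $2^{2\sigma d(\alpha)}I^{*}\mu(\alpha)\leq C\,h(d(\alpha))$ for all $\alpha$, with $h$ summable. Writing one of the two factors $2^{2\sigma d(\beta)}I^{*}\mu(\beta)\leq C\,h(d(\beta))$ in each summand gives
$$\sum_{\beta\geq\alpha}2^{2\sigma d(\beta)}\bigl(I^{*}\mu(\beta)\bigr)^{2}\leq C\sum_{\beta\geq\alpha}h(d(\beta))\,I^{*}\mu(\beta)=C\sum_{\beta\geq\alpha}h(d(\beta))\sum_{\gamma\geq\beta}\mu(\gamma).$$
Interchanging the order of summation, the right-hand side equals $C\sum_{\gamma\geq\alpha}\mu(\gamma)\sum_{\alpha\leq\beta\leq\gamma}h(d(\beta))$; since the $\beta$ with $\alpha\leq\beta\leq\gamma$ form the geodesic in $\mathcal{T}_d$ joining $\alpha$ to $\gamma$, the values $d(\beta)$ run over consecutive integers from $d(\alpha)$ to $d(\gamma)$, so $\sum_{\alpha\leq\beta\leq\gamma}h(d(\beta))\leq\sum_{N\geq 0}h(N)=:\|h\|_{1}<\infty$. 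Hence the whole expression is at most $C\|h\|_{1}\,I^{*}\mu(\alpha)$, which is the tree condition, and combining with \cite[Theorem 23]{Arcozzi} yields Lemma~\ref{carleson}.

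The only genuine difficulty — the step I would flag as the main obstacle — is the passage from the continuous Carleson embedding on $\mathbb{B}_d$ to the discrete tree inequality: this requires the dyadic decomposition of $\mathbb{B}_d$ into the Bergman-tree boxes $\{K_j^N\}$ built from \cite[Lemma 2.6]{Arcozzi2006}, the comparison of $\|f\|_{B_2^\sigma(\mathbb{B}_d)}$ with the Dirichlet-type norm of the averaged sequence on $\mathcal{T}_d$, and the observation that $I^{*}\mu(\alpha)$ is comparable to the $\mu$-mass of the Carleson region over the box $\alpha$. All of this is precisely \cite[Theorem 23]{Arcozzi}, so in the present paper it is legitimate to invoke it directly; the additional content contributed here is then just the elementary Fubini/telescoping estimate displayed above, which converts summability of $h$ into the square-function tree bound.
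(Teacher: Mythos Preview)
Your proposal is correct and follows precisely the route the paper indicates: the paper does not give an independent proof but simply states that the lemma follows from \cite[Lemma~32 and Theorem~23]{Arcozzi}, and you have accurately unpacked what those two results say and how the Fubini/telescoping argument on the Bergman tree converts the strengthened simple condition into the tree condition. There is nothing to add---your sketch is exactly the content behind the paper's citation.
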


The following Lemma can be found in \cite{Billingsley}.
\begin{lemma}\label{binimiallimit}
If X is a binomial random variable with parameter $p,N$, then for every $s=0,1,2,...,$
$$\lim_{\tiny{\begin{array}{c}N\rightarrow\infty\\pN\rightarrow0\end{array}}}\frac{P(X=s)}{(pN)^s}=
\lim_{\tiny{\begin{array}{c}N\rightarrow\infty\\pN\rightarrow0\end{array}}}\frac{P(X\geq s)}{(pN)^s}=\frac{1}{s!}$$
\end{lemma}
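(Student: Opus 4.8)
The plan is to recognize this as a Poisson-type approximation statement and to prove the two assertions in sequence: first the local (point-mass) limit for $P(X=s)$, then deduce the cumulative limit for $P(X\ge s)$ from it together with a crude tail estimate. Throughout, $s$ is held fixed while $N\to\infty$ and $pN\to 0$ jointly (so in particular $p\to 0$).

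For the first limit I would start from the exact formula $P(X=s)=\binom{N}{s}p^{s}(1-p)^{N-s}$ and divide by $(pN)^{s}$, which after writing $\binom{N}{s}=\frac{N(N-1)\cdots(N-s+1)}{s!}$ factors as
$$\frac{P(X=s)}{(pN)^{s}}=\left(\prod_{j=0}^{s-1}\Bigl(1-\tfrac{j}{N}\Bigr)\right)\cdot\frac{1}{s!}\cdot(1-p)^{N-s}.$$
The first factor tends to $1$ because $s$ is fixed, and the middle factor is exactly the target $1/s!$. The only point requiring care is the factor $(1-p)^{N-s}$: since $p\to 0$ one has $(1-p)^{-s}\to 1$, while $\log(1-p)^{N}=N\log(1-p)=-Np-\tfrac{Np^{2}}{2}-\cdots$, and here $Np\to 0$ by hypothesis forces $Np^{2}=(Np)\,p\to 0$ as well, so $(1-p)^{N}\to 1$. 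Hence the whole product converges to $1/s!$.

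For the cumulative limit I would write $P(X\ge s)=P(X=s)+P(X\ge s+1)$ and, having just handled the first summand, show that $\tfrac{P(X\ge s+1)}{(pN)^{s}}\to 0$. The key estimate is the elementary bound $\binom{N}{k}p^{k}\le \frac{(Np)^{k}}{k!}$, which gives $P(X=k)\le \frac{(Np)^{k}}{k!}$ and hence
$$\frac{P(X\ge s+1)}{(pN)^{s}}\le\sum_{k\ge s+1}\frac{(Np)^{k-s}}{k!}\le (Np)\sum_{k\ge s+1}\frac{(Np)^{k-s-1}}{k!}.$$
Once $Np\le 1$ the remaining sum is bounded (by $e$, say), uniformly in $N$, so the right-hand side is $O(Np)\to 0$. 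Combining the two pieces yields $\tfrac{P(X\ge s)}{(pN)^{s}}\to 1/s!$, which is the second asserted limit; the case $s=0$ is immediate since $P(X\ge 0)=1=(pN)^{0}/0!$.

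There is no serious obstacle here, as the argument is elementary; the only subtlety is organizational. The one genuinely delicate point is the control of $(1-p)^{N-s}$ under the joint limit, where one must use that $Np\to 0$ (not merely $p\to 0$) to kill the $Np^{2}$ term in the logarithmic expansion, and symmetrically the need for the tail bound in the second part to be uniform in $N$ so that the extra factor $Np$ genuinely forces the tail to vanish.
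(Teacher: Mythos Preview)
Your argument is correct. The point-mass limit is handled cleanly via the factorization, and your use of $\binom{N}{k}p^{k}\le (Np)^{k}/k!$ together with the uniform bound on the tail series once $Np\le 1$ is exactly what is needed to upgrade to $P(X\ge s)$.

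As for comparison with the paper: there is nothing to compare. The paper does not prove this lemma; it simply states it and cites Billingsley's \emph{Probability and Measure}. Your write-up therefore supplies a self-contained elementary proof where the paper defers to a reference. One small stylistic remark: your handling of $(1-p)^{N}$ via the logarithmic expansion is fine, but it is slightly quicker (and avoids any appearance of needing to control an infinite series termwise) to use the two-sided estimate $-\tfrac{p}{1-p}\le\log(1-p)\le -p$, which immediately gives $-\tfrac{Np}{1-p}\le N\log(1-p)\le -Np\to 0$.
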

Define
$$\mathcal{C}(B_{2}^{\sigma}\left(\mathbb{B}_{d}\right)):=\{\omega: \mu_\Lambda \text{ is a Carleson measure for $B_{2}^{\sigma}\left(\mathbb{B}_{d}\right)$}\}.$$

\begin{theo}Let $\frac{d}{2}>\sigma>0 $ and $d\geq2$. Then
\begin{itemize}
\item[(i)] If  $$\sum_{m=0}^{\infty}2^{-2\sigma m}N_m<\infty,$$ then $\p\{\mathcal{C}(B_{2}^{\sigma}\left(\mathbb{B}_{d}\right))\}=1$.
\item[(ii)] If  $$\sum_{m=0}^{\infty}2^{-2\sigma m}N_m=\infty,$$ then $\p\{\mathcal{C}(B_{2}^{\sigma}\left(\mathbb{B}_{d}\right))\}=0$.
\end{itemize}
\end{theo}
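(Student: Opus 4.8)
Part (ii) is elementary and does not use the randomness. If $m\frac{\ln 2}{2}\le\beta(0,\lambda_j)<(m+1)\frac{\ln 2}{2}$ then $1-|\lambda_j|^2\simeq 2^{-m}$, so the total mass of $\mu_\Lambda$ satisfies
\[
\mu_\Lambda(\mathbb{B}_d)=\sum_{j}(1-|\lambda_j|^2)^{2\sigma}\simeq\sum_{m\ge 0}2^{-2\sigma m}N_m .
\]
If the right-hand side diverges then $\mu_\Lambda$ is not a finite measure, and since the constant function $1$ lies in $B_2^\sigma(\mathbb{B}_d)$ with finite norm, the Carleson inequality $\int_{\mathbb{B}_d}|1|^2\,d\mu_\Lambda\le C\|1\|_{B_2^\sigma(\mathbb{B}_d)}^2$ fails for every $C$ and for every choice of the arguments $\xi_j(\omega)$. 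Hence $\p\{\mathcal{C}(B_2^\sigma(\mathbb{B}_d))\}=0$.

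For part (i) the same computation identifies the hypothesis $\sum_m 2^{-2\sigma m}N_m<\infty$ with the statement that $\mu_\Lambda$ is a finite measure (in particular each $N_m<\infty$); the content is that the random choice of arguments upgrades finiteness to the Carleson property almost surely. The plan is to check, almost surely, the strengthened simple condition and then apply Lemma \ref{carleson}. To that end I would expand $I^*\mu_\Lambda$ along the Bergman tree: for $\alpha\in\mathcal{T}_d$ with $d(\alpha)=N$, writing $T(\alpha)=\bigcup_{\alpha'\ge\alpha}\alpha'$ for the tent of $\alpha$ and
\[
X_m(\alpha):=\#\Big\{j:\ \lambda_j\in T(\alpha),\ m\tfrac{\ln 2}{2}\le\beta(0,\lambda_j)<(m+1)\tfrac{\ln 2}{2}\Big\},\qquad m\ge N,
\]
one has $I^*\mu_\Lambda(\alpha)\simeq\sum_{m\ge N}2^{-2\sigma m}X_m(\alpha)$, hence $2^{2\sigma d(\alpha)}I^*\mu_\Lambda(\alpha)\simeq\sum_{m\ge N}2^{-2\sigma(m-N)}X_m(\alpha)$. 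The geometric input, from \cite[Lemma 2.6]{Arcozzi2006} together with the standard volume estimates for non-isotropic balls on $\partial\mathbb{B}_d$, is that $T(\alpha)$ projects radially, on every sphere $\mathcal{U}_{m\ln 2/2}$ with $m\ge N$, onto a non-isotropic ball of normalized surface measure $\simeq 2^{-dN}$; since $\xi_j$ is uniform on the sphere, each $\lambda_j$ at level $m\ge N$ lies in $T(\alpha)$ with probability $\simeq 2^{-dN}$, independently over $j$. Thus each $X_m(\alpha)$ is binomial with parameters $N_m$ and a number $\simeq 2^{-dN}$, the variables $\{X_m(\alpha)\}_{m\ge N}$ are independent, and $\mathcal{T}_d$ has $\simeq 2^{dN}$ nodes of level $N$.

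With this in hand I would run a Borel--Cantelli argument over the countably many nodes of $\mathcal{T}_d$. By an elementary union bound $\p(X_m(\alpha)\ge s)\le\binom{N_m}{s}(c2^{-dN})^s\lesssim (2^{-dN}N_m)^s$ for every integer $s\ge 1$ (and Lemma \ref{binimiallimit} gives the sharp asymptotics when $2^{-dN}N_m\to 0$). Fixing an integer $s\ge 2$ and thresholds $t_N$ matched to the summable profile required by the strengthened simple condition, I would bound $\p\big(2^{2\sigma d(\alpha)}I^*\mu_\Lambda(\alpha)>t_N\big)$ by splitting over the levels $m\ge N$ and using the independence of the $X_m(\alpha)$ together with the above tails; summing over the $\simeq 2^{dN}$ nodes of level $N$ and over $N\ge N_0$, and exchanging the order of summation, one is left with a series dominated by $C\sum_m(2^{-2\sigma m}N_m)^s$, which is finite because $\sum_m 2^{-2\sigma m}N_m<\infty$. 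The mechanism is that the combinatorial factor $2^{dN}$ is beaten by the $s$-th power ($s\ge 2$) of the hitting probability $2^{-dN}$, producing a geometric gain in $N$, while the weights $2^{-2\sigma(m-N)}$ take care of the summation in $m$. By the first Borel--Cantelli lemma (Theorem \ref{theo:bc}) only finitely many nodes fail the required bound almost surely; at the finitely many exceptional levels one uses the deterministic inequality $I^*\mu_\Lambda(\alpha)\le\sum_{m\ge d(\alpha)}2^{-2\sigma m}N_m<\infty$, which is absorbed into $h$ without spoiling summability. Lemma \ref{carleson} then yields $\p\{\mathcal{C}(B_2^\sigma(\mathbb{B}_d))\}=1$.

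The step I expect to be most delicate is the calibration in this Borel--Cantelli argument: the thresholds $t_N$ must be small enough to fit the summability demanded by the strengthened simple condition, yet large enough that, after multiplying the per-node failure probabilities by the $\simeq 2^{dN}$ nodes of level $N$ and summing over all levels, the total is finite. Making this close requires combining the exponential structure of the Bergman tree (the $2^{-dN}$ hitting probabilities and the geometric weights $2^{-2\sigma m}$) with the hypothesis in its sharp form $\sum_m 2^{-2\sigma m}N_m<\infty$ --- a weaker summability hypothesis would not suffice --- in the spirit of the double-sum control of Lemma \ref{lemma:strong:ineq} and Corollary \ref{coro:strong}. A secondary, more technical point, genuinely different from the polydisc analysis (where the product structure of the Szeg\"o kernel is exploited), is the comparison between tents in $\mathcal{T}_d$ and non-isotropic balls on $\partial\mathbb{B}_d$ that makes the counts $X_m(\alpha)$ exactly binomial with success probability $\simeq 2^{-dN}$; here the spherical geometry of the ball is essential.
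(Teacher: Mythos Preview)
Your overall plan for part (i)---checking the strengthened simple condition of Lemma~\ref{carleson} via a Borel--Cantelli argument over the nodes of the Bergman tree, modelling the occupation counts $X_m(\alpha)$ as binomials with success probability $\simeq 2^{-dN}$---is exactly the paper's strategy, and your treatment of part (ii) coincides with the paper's.

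The paper, however, carries out part (i) through a decomposition that your sketch does not make explicit and that is essential. Writing $S_\alpha\simeq 2^{(\epsilon+2\sigma)N}\sum_{m\ge N}2^{-2\sigma m}X_{m,\alpha}$ with $N=d(\alpha)$ and $0<\epsilon<d-2\sigma$, the paper fixes $\gamma\in\N$ with $2\epsilon+2\sigma-2\sigma\gamma<-d$ and splits $S_\alpha=Y_\alpha+R_\alpha$ into a \emph{near} piece over $N\le m<(1+\gamma)N$ and a \emph{far} piece over $m\ge(1+\gamma)N$. The Poisson-type tail you describe, $\p(X_m(\alpha)\ge s)\lesssim(2^{-dN}N_m)^s$, is applied only to $Y_\alpha$: on the near range the total expected count is $\lesssim 2^{-(d-2\sigma-\epsilon)N}\to0$, and choosing $A$ with $\tfrac{A}{2}(2\sigma+\epsilon-d)\le-2d$ gives $\p(Y_\alpha\ge A/2)\lesssim 2^{-2dN}$ via Lemma~\ref{binimiallimit}. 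For $R_\alpha$ the bound $(2^{-dN}N_m)^s$ is useless, because for $m\ge(1+\gamma)N$ the hypothesis only forces $N_m=o(2^{2\sigma m})$, which can vastly exceed $2^{dN}$, making $2^{-dN}N_m\gg 1$. Instead the paper controls $\p(R_\alpha\ge A/2)$ by Chebyshev's inequality: $\mathrm{Var}(R_\alpha)\lesssim 2^{(2\epsilon+2\sigma-2\sigma\gamma)N}\cdot 2^{-dN}\lesssim 2^{-2dN}$, the crucial factor $2^{-2\sigma\gamma N}$ coming from the far cutoff. Summing $2^{-2dN}$ over the $\simeq 2^{dN}$ nodes of level $N$ and then over $N$ gives a convergent series.

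This near/far split with two different tail mechanisms is precisely the ``calibration'' you flagged, and it cannot be replaced by a single fixed exponent $s$ uniform in $m$. Your claim that after exchanging sums one is left with a series dominated by $C\sum_m(2^{-2\sigma m}N_m)^s$ is where the sketch breaks: the far levels feed into the Borel--Cantelli sum through a second-moment (variance) estimate, not through an $s$-th power of the expected count.
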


\begin{proof}
First, it will be shown  that if
$$\sum_{m=0}^{\infty}2^{-2\sigma m}N_m<\infty, $$
then
$\mu_{\Lambda(\omega)}=\sum_{j=1}^{\infty}\left(1-\left|\lambda_{j}\right|^{2}\right)^{2 \sigma} \delta_{\lambda_{j}} $ is a Carleson measure almost surely.

Since $\frac{d}{2}>\sigma>0$, there is a constant $\epsilon$ such that  $d>2\sigma+\epsilon$.  Next, it will be shown that
$$\sup_{\alpha\in \mathcal{T}_{d}}2^{(\epsilon+2\sigma)d(\alpha)}\sum_{\lambda_j\in\beta\geq\alpha}(1-|\lambda_j|^2)^{2\sigma}$$
is bounded almost surely, that is to say
$$2^{2 \sigma d(\alpha)} I^{*} \mu(\alpha)=2^{2\sigma d(\alpha)}\sum_{\lambda_j\in\beta\geq\alpha}(1-|\lambda_j|^2)^{2\sigma}\lesssim2^{-\epsilon d(\alpha)},$$
which implies $\mu_{\Lambda(\omega)}$ is a Carleson measure almost surely by Lemma \ref{carleson}. For any $\alpha$, let
$$X_{m,\alpha}=\# \{\lambda_j\in \Lambda(\omega): \lambda_j\in\beta\geq\alpha,d(\beta)=m \}.$$
By \cite[Lemma 2.8]{Arcozzi2006}, it holds
$$\left|\bigcup_{\beta\geq \alpha, d(\beta)=m} \beta\right|=c_{m,\alpha}2^{-d(\alpha)d}\left|\{z, m\theta \leq \beta(0,z)<(m+1)\theta\}\right|,$$
thus $X_{m,\alpha}$ follows the binomial distribution $B(c_{m,\alpha}2^{-d(\alpha)d}, N_m)$ and $\sup_{m,\alpha}c_{m,\alpha}=c<\infty$.
Then
\begin{align*}
2^{(\epsilon+2\sigma)d(\alpha)}\sum_{\lambda_j\in\beta\geq\alpha}(1-|\lambda_j|^2)^{2\sigma}
&=2^{(\epsilon+2\sigma)d(\alpha)}\sum_{m=d(\alpha)}^{\infty} \sum_{\lambda_j\in\beta\geq\alpha,d(\beta)=m}(1-|\lambda_j|^2)^{2\sigma}\\
&\lesssim 2^{(\epsilon+2\sigma)d(\alpha)}\sum_{m=d(\alpha)}^{\infty} 2^{-2\sigma m} X_{m,\alpha}\triangleq S_{\alpha}.
\end{align*}
Choosing $\gamma\in \mathbb{N}$ such that $-2\sigma\gamma+2\sigma+2\epsilon<-d$.
Let
$$
Y_{\alpha}=2^{(\epsilon+2\sigma)d(\alpha)}\sum_{m=d(\alpha)}^{d(\alpha)(1+\gamma)-1} 2^{-2\sigma m} X_{m,\alpha} \text{ and }
R_{\alpha}=2^{(\epsilon+2\sigma)d(\alpha)}\sum_{m=d(\alpha)(1+\gamma)}^{\infty} 2^{-2\sigma m}X_{m,\alpha}.
$$
For any constant $A$, observe that
$$\p\left(\{\omega:S_{\alpha}\geq A \}\right)\leq \p\left(\left\{\omega:Y_{\alpha}\geq \frac{A}{2} \right\}\right)+\p\left(\left\{\omega:R_{\alpha}\geq \frac{A}{2} \right\}\right).$$
For any $m$ and $\alpha$, there is an open set $S_{m,\alpha}$ such that
$$\bigcup_{\beta\geq \alpha, d(\beta)=m} \beta\subset S_{m,\alpha} \subset \{z, m\theta \leq \beta(0,z)<(m+1)\theta\}$$
and
$$|S_{m,\alpha}|=c2^{-d(\alpha)d}|\{z, m\theta \leq \beta(0,z)<(m+1)\theta\}|.$$
Let
$$\tilde{X}_{m,\alpha}=\# \{\lambda_j:  \lambda_j\in\Lambda(\omega)\cap S_{m,\alpha} \},$$
then
$\tilde{X}_{m,\alpha}$ follows the binomial distribution $B(c2^{-d(\alpha)d}, N_m)$ and $X_{m,\alpha}\leq \tilde{X}_{m,\alpha}$. Let
$$\tilde{Y}_{m,\alpha}= 2^{(\epsilon+2\sigma)d(\alpha)}\sum_{m=d(\alpha)}^{d(\alpha)(1+\gamma)-1} 2^{-2\sigma m} \tilde{X}_{m,\alpha},$$
then  $\tilde{Y}_{m,\alpha}$ follows the binomial distribution
$$B\Big(c2^{-d(\alpha)d}, 2^{(\epsilon+2\sigma)d(\alpha)}\sum_{m=d(\alpha)}^{d(\alpha)(1+\gamma)-1} 2^{-2\sigma m} N_m\Big).$$
Then, by Lemma \ref{binimiallimit}, it holds
\begin{align*}
\p\{\omega:\tilde{Y}_{\alpha}\geq \frac{A}{2}\}
&\lesssim \frac{\Big[c2^{-d(\alpha)d} 2^{(\epsilon+2\sigma)d(\alpha)}
\sum_{m=d(\alpha)}^{d(\alpha)(1+\gamma)-1} 2^{-2\sigma m} N_m\Big]^{A/2}}{(A/2)!}\\
&\lesssim \frac{\Big[c2^{-d(\alpha)d} 2^{(\epsilon+2\sigma)d(\alpha)}\Big]^{A/2}}{(A/2)!}.\\
\end{align*}
Since $\epsilon+2\sigma-d<0,$ choose $A$ big enough such that $\frac{A}{2}(\epsilon+2\sigma-d)\leq-2d$.
Thus
$$\p\left(\left\{\omega:Y_{\alpha}\geq \frac{A}{2}\right\}\right)\lesssim \p\left(\left\{\omega:\tilde{Y}_{\alpha}\geq \frac{A}{2}\right\}\right)\lesssim 2^{-2d(\alpha)d}.$$
On the other hand,
\begin{align*}
\E(R_{\alpha})&=2^{(\epsilon+2\sigma)d(\alpha)}\sum_{m=d(\alpha)(1+\gamma)}^{\infty} 2^{-2\sigma m}\E(X_{m,\alpha})\\
&=2^{(\epsilon+2\sigma)d(\alpha)}\sum_{m=d(\alpha)(1+\gamma)}^{\infty} 2^{-2\sigma m}c_{m,\alpha}2^{-d(\alpha)d} N_m\\
&\leq 2^{(\epsilon+2\sigma-d)d(\alpha)}\sum_{m=d(\alpha)(1+\gamma)}^{\infty} 2^{-2\sigma m} N_m\leq C,
\end{align*}
for some constant $C$. Without loss of generality, suppose $A/4\geq C$. Then
\begin{align*}
&\p\left(\left\{\omega:R_{\alpha}\geq \frac{A}{2}\right\}\right)= \p\left(\left\{\omega:R_{\alpha}-\E(R_{\alpha})\geq \frac{A}{2}-\E(R_{\alpha})\right\}\right)\\
&\leq \p\left(\left\{\omega:|R_{\alpha}-\E(R_{\alpha})|\geq \frac{A}{4}\right\}\right)\lesssim  \operatorname{Var}(R_{\alpha})\lesssim 2^{2(\epsilon+2\sigma)d(\alpha)}\sum_{m=d(\alpha)(1+\gamma)}^{\infty} 2^{-4\sigma m}2^{-d(\alpha)d} N_m\\
&\lesssim 2^{2(\epsilon+2\sigma)d(\alpha)}2^{-d(\alpha)d}\sum_{m=d(\alpha)(1+\gamma)}^{\infty} 2^{-2\sigma m}\lesssim 2^{2(\epsilon+2\sigma)d(\alpha)}2^{-d(\alpha)d} 2^{-2\sigma d(\alpha)(1+\gamma)}\\
&\lesssim2^{-d(\alpha)d} 2^{d(\alpha)(-2\sigma\gamma+2\sigma+2\epsilon)}\lesssim2^{-d(\alpha)2d}.
\end{align*}
Thus,
\begin{align*}
\sum_{\alpha\in \mathcal{T}_{d}} \p\left(\left\{\omega:S_{\alpha}\geq A \right\}\right)
&=\sum_{k=0}^{\infty}\sum_{\alpha\in \mathcal{T}_{d},d(\alpha)=k} \p\left(\left\{\omega:S_{\alpha}\geq A \right\}\right)\\
&\lesssim\sum_{k=0}^{\infty}\sum_{\alpha\in \mathcal{T}_{d},d(\alpha)=k}2^{-d(\alpha)2d}\lesssim\sum_{k=0}^{\infty}2^{-kd}<\infty,
\end{align*}
which means that $S_{\alpha}$ is bounded almost surely. Thus $\mu_{\Lambda(\omega)}$ is a Carleson measure almost surely.

On the other hand, if $\sum_{m=0}^{\infty}2^{-2\sigma m}N_m=\infty$, then
$$\int_{\mathbb{B}_d}d\mu_{\Lambda}=\sum_{j=1}^{\infty}\left(1-\left|\lambda_{j}\right|^{2}\right)^{2 \sigma}
\backsimeq\sum_{m=0}^{\infty}2^{-2\sigma m}N_m=\infty.$$
Thus,
$$\p\{\mathcal{C}(B_{2}^{\sigma}\left(\mathbb{B}_{d}\right)) \}=0.$$
\end{proof}
For a sequence $\{z_j\}$, if $\inf _{i \neq j} \beta\left(z_{i}, z_{j}\right)>0$, call $\{z_j\}$ weakly separated.
On the unit ball, denote
$$\mathcal{W}(\mathbb{B}_{d}):=\{\omega: \Lambda(\omega) \text{ is weakly separated in $\mathbb{B}_{d}$}\}.$$

We need to point out that the weak separation with respect to Bergman metric is equivalent to the weak separation with respect to pseudo-hyperbolic metric. Thus, we have following lemma.
\begin{lemma}[\protect{\cite[Lemma 3.5]{Massaneda}}]\label{separation}
Let $\Lambda(\omega)=\left\{\lambda_{j}\right\}$ be a random sequence. Then the following statements hold.
\begin{itemize}
\item[(i)] If $$\sum_{m}2^{-dm}N^2_m<\infty,$$then $\p\{\mathcal{W}(\mathbb{B}_{d}) \}=1$.
\item[(ii)] If $$\sum_{m}2^{-dm}N^2_m=\infty,$$ then $\p\{\mathcal{W}(\mathbb{B}_{d}) \}=0$.
\end{itemize}
\end{lemma}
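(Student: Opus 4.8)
The plan is to adapt Cochran's disc argument, reproduced above in the proof of Theorem~\ref{theo:polydisc}(i), to the non-isotropic geometry of the sphere; this recovers \cite[Lemma 3.5]{Massaneda}. The two structural inputs are the identity
\[
1-\rho(z,w)^{2}=\frac{(1-|z|^{2})(1-|w|^{2})}{|1-\langle z,w\rangle|^{2}},
\]
together with the measure estimate $\sigma\big(\{\eta\in\partial\mathbb{B}_d:|1-\langle\zeta,\eta\rangle|<\delta\}\big)\asymp\delta^{d}$ for the normalized surface measure $\sigma$ on $\partial\mathbb{B}_d$; and the observation that, since the radii $\rho_j$ are deterministic, the index set of points falling in the $m$-th Bergman shell $\{z:m\frac{\ln 2}{2}\le\beta(0,z)<(m+1)\frac{\ln 2}{2}\}$ is deterministic, so events depending only on the $\xi_j$'s of distinct shells are independent. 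I will also use that $1-|z|^{2}\asymp 2^{-m}$ on the $m$-th shell and that $\beta$ and $\rho$ induce the same notion of weak separation.

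For the convergence half, assume $\sum_m 2^{-dm}N_m^{2}<\infty$; then each $N_m$ is finite. If $\rho(\lambda_i,\lambda_j)<r_0<1$ with $\lambda_i$ in the $m$-th shell, the triangle inequality for $\beta$ confines $\lambda_j$ to one of $K=K(r_0)$ consecutive shells $m,\dots,m+K$, and the displayed identity forces $|1-\langle\xi_i,\xi_j\rangle|\lesssim_{r_0}2^{-m}$. Let $\Omega_m$ be the event that some pair with $\lambda_i$ in shell $m$ and $\lambda_j$ in shells $m,\dots,m+K$ satisfies $\rho(\lambda_i,\lambda_j)<r_0$. A union bound over pairs, together with the cap estimate, gives $\p(\Omega_m)\lesssim_{K,r_0}\big(\sum_{m'=m}^{m+K}N_{m'}\big)^{2}2^{-dm}\lesssim_{K}\sum_{m'=m}^{m+K}N_{m'}^{2}2^{-dm'}$, which is summable; the Borel--Cantelli Lemma then leaves only finitely many $\Omega_m$, so every pair with $\rho(\lambda_i,\lambda_j)<r_0$ lies among finitely many points, whence $\inf_{i\ne j}\rho(\lambda_i,\lambda_j)>0$ almost surely and $\p(\mathcal{W}(\mathbb{B}_d))=1$.

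For the divergence half, assume $\sum_m 2^{-dm}N_m^{2}=\infty$ and fix $l\in\N$. In each shell $m$, partition the radial range (of length $\asymp 2^{-m}$) into $\asymp 2^{l}$ subintervals of length $\asymp 2^{-(m+l)}$ and retain the subinterval $J_m$ carrying the most of the deterministic radii of that shell, so at least $N_m/2^{l}$ of them. Let $B_m^{(l)}$ be the event that two points with radii in $J_m$ also satisfy $|1-\langle\xi_i,\xi_j\rangle|<c\,2^{-(m+l)}$; expanding $1-\rho_i\rho_j\langle\xi_i,\xi_j\rangle=(1-\rho_i\rho_j)+\rho_i\rho_j(1-\langle\xi_i,\xi_j\rangle)$ and using the elementary identity $(1-\rho_i\rho_j)^{2}=(1-\rho_i^{2})(1-\rho_j^{2})+(\rho_i-\rho_j)^{2}$ in the displayed formula shows that $B_m^{(l)}$ forces $\rho(\lambda_i,\lambda_j)\lesssim 2^{-l/2}$. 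Since the relevant $\xi_j$'s are i.i.d.\ uniform and a single cap of the pertinent size has measure $\asymp 2^{-d(m+l)}$, a birthday-problem estimate of $\p\big((B_m^{(l)})^{c}\big)$, carried out exactly as in the proof of Theorem~\ref{theo:polydisc}(i), gives $\p(B_m^{(l)})\gtrsim\min\big\{1,\,2^{-(d+2)l}N_m^{2}2^{-dm}\big\}$, whose sum over $m$ diverges. The events $B_m^{(l)}$ being independent over $m$, the Borel--Cantelli Lemma yields $\p(\limsup_m B_m^{(l)})=1$; intersecting over $l\in\N$ gives $\inf_{i\ne j}\beta(\lambda_i,\lambda_j)=0$ almost surely, i.e.\ $\p(\mathcal{W}(\mathbb{B}_d))=0$.

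I expect the bookkeeping in the divergence half to be the main obstacle: the radial subinterval length $2^{-(m+l)}$ and the cap radius must be tuned so that membership in the resulting "box" genuinely forces closeness of order $2^{-l/2}$ (this is precisely where the identity for $(1-\rho_i\rho_j)^{2}$ enters, splitting the radial and tangential contributions), while the per-shell probability still retains the factor $N_m^{2}2^{-dm}$ up to an $l$-dependent constant, so that divergence survives the pigeonhole loss of $2^{-l}$ points per shell. Once the sphere-cap measure estimate $\sigma(\{|1-\langle\zeta,\eta\rangle|<\delta\})\asymp\delta^{d}$ and the independence-across-shells observation are in place, the rest transcribes the unit-disc computation.
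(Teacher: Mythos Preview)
The paper does not prove this lemma; it merely cites \cite[Lemma~3.5]{Massaneda} and records the statement. Your sketch is a correct reconstruction of that argument: you transplant the Cochran-type proof of Theorem~\ref{theo:polydisc}(i) to the ball by replacing torus rectangles with non-isotropic spherical caps via the estimate $\sigma(\{\eta:|1-\langle\zeta,\eta\rangle|<\delta\})\asymp\delta^{d}$, and by using the identity $(1-\rho_i\rho_j)^{2}=(1-\rho_i^{2})(1-\rho_j^{2})+(\rho_i-\rho_j)^{2}$ to separate the radial and tangential contributions in the divergence half. Both halves are sound; the only places that deserve a line of justification when you write it out are (a) in the convergence half, the passage from ``only finitely many $\Omega_m$ occur'' to $\inf_{i\ne j}\rho(\lambda_i,\lambda_j)>0$ (you need that the finitely many remaining shells contain finitely many points, which follows from $N_m<\infty$, and that distinct indices give distinct points almost surely), and (b) in the divergence half, the birthday lower bound on $\p(B_m^{(l)})$ on the sphere, which is cleanest via a second-moment (Paley--Zygmund) argument since the pairwise-collision indicators are uncorrelated by rotation invariance, rather than by literally copying the product bound from the torus case.
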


By Theorem \ref{interpolation}, a sequence is an interpolating sequence if and only if it is weakly separated and the corresponding measure is a Carleson measure. The proof of Theorem \ref{interpolation:besove} is now given.

\begin{proof}[Proof of  Theorem \ref{interpolation:besove}]
Since $\frac{1}{2}\geq\sigma>0 $ and $d\geq2$, then $-d+4\sigma\leq0$.
If $\displaystyle\sum_{m=0}^{\infty}2^{-2\sigma m}N_m<\infty$, then $\displaystyle\sum_{m=0}^{\infty}2^{-4\sigma m}N^2_m<\infty$, which implies
$$\sum_{m=0}^{\infty}2^{-dm}N^2_m=\sum_{m=0}^{\infty}2^{(-d+4\sigma)m}2^{-4\sigma m}N^2_m\leq\sum_{m=0}^{\infty}2^{-4\sigma m}N^2_m<\infty.$$
By Theorem \ref{interpolation} and Lemma \ref{separation}, the conclusion follows.

On the other hand, if $\sum_{m=0}^{\infty}2^{-2\sigma m}N_m=\infty$, then
$$\p\{\mathcal{C}(B_{2}^{\sigma}\left(\mathbb{B}_{d}\right)) \}=0.$$
By Theorem \ref{interpolation} again, it follows
$$\p\{\mathcal{I}(B_{2}^{\sigma}\left(\mathbb{B}_{d}\right))\}=0.$$
\end{proof}

Finally, the uniformly separated sequences on the unit ball when $d\geq2$ are studied. 
It is well known that
$$
1-\left|\varphi_{z}(w)\right|^{2}=\frac{\left(1-|z|^{2}\right)\left(1-|w|^{2}\right)}{|1-\langle w, z\rangle|^{2}}.
$$
A sequence $\{z_j\}$ is uniformly separated if $\inf_{k}\prod_{j\neq k}\rho(z_{j},z_k)>0,$ where $\rho$ is the pseudo-hyperbolic distance on the unit ball.
Let
$$\mathcal{U}(\mathbb{B}_{d}):=\{\omega: \Lambda(\omega) \text{ is uniformly separated in $\mathbb{B}_{d}$}\}.$$
An important lemma in the analysis, see \cite[Proposition 1.4.10]{Rudin}, is needed.
\begin{lemma}\label{integral}
For $z \in \mathbb{B}_d$ and $c\in\mathbb{R}$, let
$$
I_{c}(z)=\int_{\partial\mathbb{B}_d} \frac{1}{|1-\langle z, \zeta\rangle|^{d+c}}d \sigma(\zeta).
$$
When $c<0,$ then $I_{c}$ is bounded in $\mathbb{B}_d$.  When $c>0,$ then
$$
I_{c}(z) \approx\left(1-|z|^{2}\right)^{-c}.
$$
Finally,
$$
I_{0}(z) \approx \log \frac{1}{1-|z|^{2}}.
$$
\end{lemma}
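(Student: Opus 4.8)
The plan is to prove Lemma~\ref{integral} by reducing $I_c$ to a one-dimensional integral via the unitary symmetry of $\sigma$ and then dyadically decomposing the sphere according to the size of $|1-\langle z,\zeta\rangle|$. Since $\sigma$ is invariant under the unitary group of $\mathbb{C}^d$, picking a unitary $V$ with $Vz=|z|e_1$ and substituting $\zeta\mapsto V^{*}\zeta$ shows that $I_c(z)$ depends only on $r:=|z|$ and that we may replace $\langle z,\zeta\rangle$ by $r\zeta_1$. Set $\delta:=1-|z|^{2}$, so $\delta\approx 1-r$, and note that for every $\zeta\in\partial\mathbb{B}_d$ one has $\tfrac12\delta\le 1-r\le|1-r\zeta_1|\le 1+r\le 2$; thus the quantity $|1-\langle z,\zeta\rangle|$ ranges, up to absolute constants, over $[\delta,2]$.

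Next I would write $\partial\mathbb{B}_d=\bigsqcup_{0\le j\le J}E_j$, with $E_0=\{\zeta:|1-\langle z,\zeta\rangle|<2\delta\}$, $E_j=\{\zeta:2^{j}\delta\le|1-\langle z,\zeta\rangle|<2^{j+1}\delta\}$ for $j\ge 1$, and $J$ the largest index with $E_j\neq\emptyset$, so that $J+1\approx\log_2(1/\delta)\approx\log\frac{1}{1-|z|^{2}}$. On $E_j$ the integrand is comparable to $(2^{j}\delta)^{-(d+c)}$, whence
\[
I_c(z)\approx\sum_{j=0}^{J}(2^{j}\delta)^{-(d+c)}\,\sigma(E_j).
\]
The single geometric input needed is the standard size estimate for the non-isotropic (Kor\'anyi) balls on the sphere, $\sigma\big(\{\zeta\in\partial\mathbb{B}_d:|1-\langle w,\zeta\rangle|<t\}\big)\approx t^{d}$ uniformly for $w$ near $\partial\mathbb{B}_d$ and $\delta\le t\le 1$; since for an interior point with $1-|z|^{2}=\delta$ the corresponding sublevel set is empty once $t\lesssim\delta$, this yields $\sigma(E_j)\approx(2^{j}\delta)^{d}$ for every $j$, and therefore
\[
I_c(z)\approx\sum_{j=0}^{J}(2^{j}\delta)^{-c}=\delta^{-c}\sum_{j=0}^{J}2^{-jc}.
\]

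The three cases then follow at once from this finite geometric sum. If $c<0$, then $\sum_{j=0}^{J}2^{-jc}=\sum_{j=0}^{J}2^{j|c|}\approx 2^{J|c|}\approx\delta^{-|c|}$, so $I_c(z)\approx\delta^{-c}\delta^{|c|}\approx 1$, i.e. $I_c$ is bounded on $\mathbb{B}_d$. If $c=0$ the sum is $J+1\approx\log\frac{1}{1-|z|^{2}}$. If $c>0$ the series $\sum_{j\ge 0}2^{-jc}$ converges, so $I_c(z)\approx\delta^{-c}=(1-|z|^{2})^{-c}$.

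I expect the non-isotropic ball estimate to be the only point with any content; everything else is bookkeeping. A slightly different route, closer to \cite[Proposition 1.4.10]{Rudin} and giving sharp constants, is to apply the slice formula $\int_{\partial\mathbb{B}_d}g(\zeta_1)\,d\sigma(\zeta)=\tfrac{d-1}{\pi}\int_{\mathbb{D}}g(\lambda)(1-|\lambda|^{2})^{d-2}\,dA(\lambda)$ for $d\ge 2$ (for $d=1$ one recovers the classical estimate of $\int_{\mathbb{T}}|\zeta-z|^{-(1+c)}\,|d\zeta|$), expand $|1-r\lambda|^{-(d+c)}=\sum_{j,k}a_j a_k r^{j+k}\lambda^{j}\bar\lambda^{k}$ with $a_k=\Gamma(\tfrac{d+c}{2}+k)/\big(\Gamma(\tfrac{d+c}{2})\,k!\big)\approx(1+k)^{\frac{d+c}{2}-1}$, integrate in polar coordinates so that all cross terms vanish, and use $\int_0^1 t^{k}(1-t)^{d-2}\,dt\approx(1+k)^{-(d-1)}$ to reduce $I_c(z)$ to a constant multiple of $\sum_{k\ge 0}(1+k)^{c-1}r^{2k}$; the behaviour of this series as $r\to 1^{-}$ again gives the three regimes, the only mildly delicate case being $c>0$, handled by comparing the sum with $\int_0^{\infty}t^{c-1}r^{2t}\,dt=\Gamma(c)\big(\log r^{-2}\big)^{-c}\approx(1-r^{2})^{-c}$.
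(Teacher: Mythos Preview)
The paper does not supply a proof of this lemma at all: it is stated with a pointer to \cite[Proposition~1.4.10]{Rudin} and then used as a black box. So there is no in-paper argument to compare against; what you have written is additional content rather than a reconstruction.

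Your dyadic-shell argument is correct. The reduction to $z=re_1$ by unitary invariance, the two-sided bound $\tfrac12\delta\le|1-\langle z,\zeta\rangle|\le 2$, and the summation $I_c(z)\approx\delta^{-c}\sum_{j=0}^{J}2^{-jc}$ all check out, and the three regimes follow exactly as you say. The one substantive ingredient, as you note, is the Kor\'anyi ball estimate $\sigma\{\zeta:|1-\langle z,\zeta\rangle|<t\}\approx t^{d}$ for $\delta\lesssim t\lesssim 1$; to make this fully self-contained for interior $z$ you can observe that $\big||1-r\overline{\zeta_1}|-|1-\overline{\zeta_1}|\big|\le 1-r$, so for $t\gtrsim\delta$ the sublevel sets for $z=re_1$ and for its boundary projection $e_1$ are comparable, reducing to the standard boundary case. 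Your alternative route via the slice formula and the series $\sum_k (1+k)^{c-1}r^{2k}$ is essentially Rudin's own proof of Proposition~1.4.10, so between the two you have covered both the cited argument and a geometric variant; either would be acceptable here.
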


\begin{prop}Let $\Lambda(\omega)=\left\{\lambda_{j}\right\}$ be a random sequence. Then the following statements hold.\\
\begin{itemize}
\item[(i)]If $d=2$ and
$$\sum_{m=0}^{\infty}N_m 2^{-m}(m+1)<\infty,$$
then $\p\{\mathcal{U}(\mathbb{B}_{d})\}=1$.
\item[(ii)]If $d\geq3$ and
$$\sum_{m=0}^{\infty}N_m2^{-m}<\infty,$$
 then $\p\{\mathcal{U}(\mathbb{B}_{d})\}=1$.
\item[(iii)]If $d\geq3$ and $$\sum_{m=0}^{\infty}N_m2^{-m}=\infty,$$
then $\p\{\mathcal{U}(\mathbb{B}_{d})\}=0$.
\end{itemize}
\end{prop}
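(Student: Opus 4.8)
The plan is to reduce uniform separation to the finiteness of a single supremum of sums, estimate the expectations of those sums shell by shell via Lemma~\ref{integral}, and settle the converse by a deterministic estimate. First I would set $a_{jk}:=1-\rho(\lambda_j,\lambda_k)^2=(1-|\lambda_j|^2)(1-|\lambda_k|^2)\,|1-\langle\lambda_k,\lambda_j\rangle|^{-2}$ and $S_k:=\sum_{j\ne k}a_{jk}$, and note that on the event $\mathcal{W}(\mathbb{B}_d)$ the realization $\Lambda(\omega)$ is uniformly separated \emph{if and only if} $\sup_k S_k<\infty$: one always has $a_{jk}\le-\log\rho(\lambda_j,\lambda_k)^2$, while if $\inf_{j\ne k}\rho(\lambda_j,\lambda_k)=\delta>0$ then convexity of $t\mapsto-\log(1-t)$ on $[0,1-\delta^2]$ gives $-\log\rho(\lambda_j,\lambda_k)^2\le C_\delta\,a_{jk}$, so that $\sup_k\big(-\log\prod_{j\ne k}\rho(\lambda_j,\lambda_k)^2\big)$ and $\sup_k S_k$ are finite together. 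Moreover either hypothesis in (i)--(ii) forces $\sum_m 2^{-m}N_m<\infty$, hence $N_m\lesssim 2^m$ and $\sum_m 2^{-dm}N_m^2\lesssim\sum_m 2^{-(d-1)m}N_m\le\sum_m 2^{-m}N_m<\infty$ since $d\ge2$, so $\p\{\mathcal{W}(\mathbb{B}_d)\}=1$ by Lemma~\ref{separation}(i). It thus remains, for (i) and (ii), to show $\sup_k S_k<\infty$ almost surely.

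For this I would use the crude bound $\sup_k S_k\le\sum_k S_k$, which is enough here because the fixed radii make $\sum_k(1-|\lambda_k|^2)\approx\sum_m 2^{-m}N_m$ finite. I would estimate $\sum_k\E(S_k)=\sum_k\sum_{j\ne k}\E(a_{jk})$ by averaging first in the independent variable $\xi_j$: by rotation invariance, $\E_{\xi_j}\big(|1-\langle\lambda_k,\lambda_j\rangle|^{-2}\big)=I_{2-d}(w)$ for any $w$ with $|w|=\rho_j\rho_k$, and by Lemma~\ref{integral} this is $\lesssim 1$ when $d\ge3$ and $\lesssim\log\frac{1}{1-\rho_j^2\rho_k^2}\le\log\frac{1}{1-\rho_k^2}$ when $d=2$. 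Hence $\E(a_{jk})\lesssim(1-\rho_j^2)(1-\rho_k^2)$ for $d\ge3$ and $\E(a_{jk})\lesssim(1-\rho_j^2)(1-\rho_k^2)\log\frac{1}{1-\rho_k^2}$ for $d=2$. Regrouping by Bergman shells --- on the $m$-th shell $1-|\lambda_j|^2\approx 2^{-m}$, $\log\frac{1}{1-|\lambda_j|^2}\approx m+1$, and there are $N_m$ points --- and summing over $k$ yields $\sum_k\E(S_k)\lesssim\big(\sum_m 2^{-m}N_m\big)^2$ when $d\ge3$, and $\sum_k\E(S_k)\lesssim\big(\sum_m 2^{-m}(m+1)N_m\big)\big(\sum_m 2^{-m}N_m\big)$ when $d=2$. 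Under the hypothesis of (ii), resp.\ of (i), the right-hand side is finite, so $\sum_k S_k<\infty$, hence $\sup_k S_k<\infty$, almost surely, which proves (i) and (ii).

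The converse (iii) is in fact deterministic. If $\sum_m 2^{-m}N_m=\infty$ then $\sum_j(1-|\lambda_j|^2)\approx\sum_m 2^{-m}N_m=\infty$; since $|1-\langle\lambda_k,\lambda_j\rangle|<2$ we have $a_{jk}>\tfrac14(1-|\lambda_j|^2)(1-|\lambda_k|^2)$, so $S_k\ge\tfrac14(1-|\lambda_k|^2)\sum_{j\ne k}(1-|\lambda_j|^2)=\infty$ for every $k$ and every $\omega$; by the reduction above no realization of $\Lambda$ is uniformly separated, whence $\p\{\mathcal{U}(\mathbb{B}_d)\}=0$.

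The step I expect to be the main obstacle is the second one: doing the shell-by-shell bookkeeping cleanly and, above all, recognizing the spherical average as the integral $I_{2-d}$ so that Lemma~\ref{integral} applies, which produces the logarithmic gain precisely in the borderline dimension $d=2$ and is responsible for the extra factor $m+1$ there. In contrast with the polydisc case, no special device is needed to pass from $\sup_k$ to a convergent sum, precisely because the a priori radii already control the total mass $\sum_k(1-|\lambda_k|^2)$.
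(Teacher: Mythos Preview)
Your argument is correct and follows essentially the same route as the paper's proof: reduce uniform separation to $\sup_k\sum_{j\ne k}(1-\rho(\lambda_j,\lambda_k)^2)<\infty$ under weak separation, identify the spherical average with $I_{2-d}$ via Lemma~\ref{integral}, and sum shell by shell; part (iii) is the same deterministic estimate. The only cosmetic differences are that you bound $\sup_k S_k$ by $\E\big(\sum_k S_k\big)$ directly whereas the paper passes through Markov's inequality and Borel--Cantelli, and in the $d=2$ case you use the one-sided bound $\log\frac{1}{1-\rho_j^2\rho_k^2}\le\log\frac{1}{1-\rho_k^2}$ instead of the paper's symmetric dyadic splitting, both of which are mild simplifications.
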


\begin{proof}
First, $\inf_{k}\prod_{j\neq k}\rho(\lambda_{j},\lambda_k)^2>0$ almost surely if and only if
$$\sup_{k}\sum_{j\neq k}-\log{\rho(\lambda_{j},\lambda_k)^2}<\infty.$$
Since $-\log{x}\geq 1-x$ when $1\geq x>0$, if follows
$$-\log{\rho(\lambda_{j},\lambda_k)^2}\geq 1-\rho(\lambda_j,\lambda_k)^2.$$
Thus $\inf_{k}\prod_{j\neq k}\rho(\lambda_{j},\lambda_k)^2>0$ almost surely implies $\sup_{k}\sum_{j\neq k}[1-\rho(\lambda_j,\lambda_k)^2]<\infty$ almost surely.

On the other hand, if  $\inf_{\lambda_j\neq \lambda_k}\rho(\lambda_j,\lambda_k)>0 $, then
$$-\log{\rho(\lambda_{j},\lambda_k)^2}\lesssim1-\rho(\lambda_j,\lambda_k)^2.$$
Thus, in this case, $\sup_{k}\sum_{j\neq k}[1-\rho(\lambda_j,\lambda_k)^2]<\infty$ almost surely implies $\inf_{k}\prod_{j\neq k}\rho(\lambda_{j},\lambda_k)^2>0$ almost surely.

For any constant $c$, consider
\begin{align*}
&\sum_{k=1}^{\infty}\p\left(\left\{\omega: \sum_{j\neq k}[1-\rho(\lambda_j,\lambda_k)^2]>c\right\}\right)
\leq \frac{1}{c}\sum_{k=1}^{\infty}\E\Big[ \sum_{j\neq k}[1-\rho(\lambda_j,\lambda_k)^2]\Big]\\
=&\frac{1}{c}\sum_{k=1}^{\infty}\sum_{j\neq k}\E\Big[[1-\rho(\lambda_j,\lambda_k)^2]\Big]=\frac{1}{c}\sum_{k=1}^{\infty}\sum_{j\neq k}\E\Big[\frac{\left(1-|\lambda_j|^{2}\right)\left(1-|\lambda_k|^{2}\right)}{|1-\langle \lambda_k, \lambda_j\rangle|^{2}}\Big]\\
=& \frac{1}{c}\sum_{k=1}^{\infty}\sum_{j\neq k}
\int_{\partial \mathbb{B}_d}\int_{\partial \mathbb{B}_d}\frac{\left(1-|\lambda_j|^{2}\right)\left(1-|\lambda_k|^{2}\right)}{|1-\langle \lambda_k, \lambda_j\rangle|^{2}}d\sigma(\xi_{j})d\sigma(\xi_{k})\\
\leq &\frac{1}{c}\sum_{k=1}^{\infty}\sum_{j\neq k}
\left(1-|\lambda_j|^{2}\right)\left(1-|\lambda_k|^{2}\right)\int_{\partial \mathbb{B}_d}\int_{\partial \mathbb{B}_d}\frac{1}{|1-\langle |\lambda_j||\lambda_k|\xi_{k}, \xi_j\rangle|^{d+2-d}}d\sigma(\xi_{j})d\sigma(\xi_{k}).
\end{align*}
Next, consider two cases.

If $d=2$, then by Lemma \ref{integral} we have
$$
\int_{\partial \mathbb{B}_d}\frac{1}{|1-\langle |\lambda_j||\lambda_k|\xi_{k}, \xi_j\rangle|^{d+2-d}}d\sigma(\xi_{j})\lesssim\log \frac{1}{1-|\lambda_j|^2|\lambda_k|^2}.
$$
Substituting in this estimate to the above yields that
\begin{align*}
&\sum_{k=1}^{\infty}\p\left(\left\{\omega: \sum_{j\neq k}[1-\rho(\lambda_j,\lambda_k)^2]>c\right\}
\right)
\lesssim  \frac{1}{c}\sum_{k=1}^{\infty}
\sum_{j=1}^{\infty}\left(1-|\lambda_j|^{2}\right)\left(1-|\lambda_k|^{2}\right)\log\frac{1}{1-|\lambda_j|^2|\lambda_k|^2}\\
\lesssim&\sum_{l=0}^{\infty}\sum_{m=0}^{\infty}N_lN_m 2^{-l}2^{-m}\Big[\log{(\frac{1}{2^{-l-1}+2^{-m-1}})}+1\Big]\\
\leq&2\sum_{l\geq m}N_lN_m 2^{-l}2^{-m}\Big[\log{(\frac{1}{2^{-l-1}+2^{-m-1}})}+1\Big]\lesssim\sum_{l\geq m}N_lN_m 2^{-l}2^{-m}(l+1)\\
=&\sum_{m=0}^{\infty}N_m 2^{-m} \sum_{l=m}^{\infty}N_l2^{-l}(l+1)\leq \left(\sum_{l=0}^{\infty}N_l 2^{-l}(l+1)\right)^{2}.
\end{align*}
Thus $\sum_{m=0}^{\infty}N_l 2^{-l}(l+1)<\infty$ implies that
$$\sup_{k}\sum_{j\neq k}[1-\rho(\lambda_j,\lambda_k)^2]<\infty$$
almost surely.
Lemma \ref{separation} also yields that $\sum_{m=0}^{\infty}N_l 2^{-l}(l+1)<\infty$ implies that $\Lambda(\omega)$ is weakly separated almost surely, thus $\sum_{m=0}^{\infty}N_l 2^{-l}(l+1)<\infty$ implies that $\Lambda(\omega)$ is uniformly separated almost surely.

If $d\geq3$, by Lemma \ref{integral}, then
$$
\int_{\partial \mathbb{B}^d}\frac{1}{|1-\langle |\lambda_j||\lambda_k|\xi_{k}, \xi_j\rangle|^{d+2-d}}d\sigma(\xi_{j})\leq C_d<\infty.
$$
Then
\begin{align*}
\sum_{k=1}^{\infty}\p\left(\left\{\omega: \sum_{j\neq k}[1-\rho(\lambda_j,\lambda_k)^2]>c\right\}\right)
\lesssim\sum_{k=1}^{\infty}\sum_{j=1}^{\infty}\left(1-|\lambda_j|^{2}\right)\left(1-|\lambda_k|^{2}\right)
\backsimeq\left(\sum_{j=1}^{\infty}N_{m}2^{-m}\right)^2.
\end{align*}
By a similar argument for the case of $d=2$, $\sum_{j=1}^{\infty}N_{m}2^{-m}<\infty$ implies that $\Lambda(\omega)$ is separated almost surely.
Conversely, if
$\sum_{j=1}^{\infty}N_{m}2^{-m}=\infty$, then  $\sum_{j=1}^{\infty}\left(1-|\lambda_j|^{2}\right)=\infty.$
Thus, for any $z_k$, there holds
\begin{align*}
\sum_{j\neq k}-\log{\rho(\lambda_{j},\lambda_k)^2}&\geq \sum_{j\neq k}1-\rho(\lambda_j,\lambda_k)^2=
\sum_{j\neq k}\frac{(1-|\lambda_k|^{2})(1-|\lambda_j|^{2})}{|1-\langle \lambda_j,\lambda_k\rangle|^2}\\
&\geq\frac{(1-|\lambda_k|^{2})}{(1+|\lambda_k|)^2}\sum_{j\neq k}(1-|\lambda_j|^{2})=\infty.
\end{align*}
Giving the conclusion that
$$\p\{\mathcal{U}(\mathbb{B}_{d}) \}=0.$$
\end{proof}

\end{document}